\documentclass[12pt]{amsart}
\usepackage{latexsym,amsmath,amssymb}

\title[Homotopy groups and Lipschitz homotopy
groups]{\protect{Homotopy groups of spheres and Lipschitz homotopy
    groups of Heisenberg groups}}

\author{Piotr Haj\l{}asz, Armin Schikorra, Jeremy T. Tyson}

\address{Piotr Haj\l{}asz, Department of Mathematics, University of Pittsburgh, 301
  Thackeray Hall, Pittsburgh, PA 15260, USA, {\tt hajlasz@pitt.edu}}
\address{Armin Schikorra, Max-Planck Institut MiS Leipzig, Inselstr.
  22, 04103 Leipzig, Germany, {\tt armin.schikorra@mis.mpg.de}} 
\address{Jeremy T. Tyson, Department of Mathematics, University of
  Illinois at Urbana-Champaign, 1409 West Green Street, Urbana, IL
  61801, USA, {\tt tyson@math.uiuc.edu}} 

\thanks{P.H. was supported by NSF grant DMS-1161425. A.S. was
  supported by DAAD fellowship D/12/40670. J.T.T. was supported by NSF
grant DMS-1201875.}

\belowdisplayskip=18pt plus 6pt minus 12pt \abovedisplayskip=18pt
plus 6pt minus 12pt
\parskip 8pt plus 1pt

%%%%This makes a double spacing. Use this with 11pt style. If you
%%%%want to use this just insert \dsp after the \begin{document}
%%%%The correct baselinestretch for double spacing is 1.37. However
%%%%you can use different parameter.

%%%%%%LITERY%%%%%%%
\def\eps{\varepsilon}

  % metric balls

\def\B{{\mathbb B}}

\def\M{{\mathcal M}}
\def\N{{\mathcal N}}

\def\S{{\mathbb S}}

\newcommand{\boldg}{{\mathbf g}}

\newcommand{\cJ}{\mathcal J}

%%%%%%%%%%%TWIERDZENIA%%%%%%%%%%%%
\newtheorem{theorem}{Theorem}
\newtheorem{lemma}[theorem]{Lemma}
\newtheorem{corollary}[theorem]{Corollary}
\newtheorem{proposition}[theorem]{Proposition}

\theoremstyle{definition}
\newtheorem{remark}[theorem]{Remark}
\newtheorem{definition}[theorem]{Definition}

%%%FUNKCJE MATEATYCZNE%%%%%%%%%

\def\lip{{\rm Lip\,}}
\def\rank{{\rm rank\,}}
\def\supp{{\rm supp\,}}

%%%%%%%%%%%%%%%%%%%%%%%%%%%%%%%%%%%%%%%%%%%%%%%%%%%%

\newcommand{\R}{\mathbb{R}}
\newcommand{\Q}{\mathbb{Q}}
\newcommand{\HI}{\mathcal{H}}
\newcommand{\Z}{\mathbb{Z}}
\newcommand{\vrac}[1]{\| #1 \|}
\newcommand{\fracm}[1]{\frac{1}{#1}}
\newcommand{\brac}[1]{\left (#1 \right )}
\newcommand{\Ep}{\bigwedge\nolimits}
\newcommand{\vol}{d{\rm vol}}

%%%%%%%%%%%%%%CALKI%%%%%%%%%%%%%%%%%%%%%%%%%%%%%%%%%
\newcommand{\barint}{
\rule[.036in]{.12in}{.009in}\kern-.16in \displaystyle\int }

\newcommand{\barcal}{\mbox{$ \rule[.036in]{.11in}{.007in}\kern-.128in\int $}}
%%%%%%%%%%%%%%%%%%%%%%%%%%%%%%%%%%%%%%%%%%%%%%%%%%%%

\newcommand{\bbbr}{\mathbb R}

\newcommand{\bbbh}{\mathbb H}
\newcommand{\bbbc}{\mathbb C}

%%%%%%%%%%%%%%%%%%%%%%%%%%%%%%%%%%%%%%%%%%%%%%%%%%%

\def\mvint_#1{\mathchoice
          {\mathop{\vrule width 6pt height 3 pt depth -2.5pt
                  \kern -8pt \intop}\nolimits_{\kern -3pt #1}}%
%%%% P.S., 01/03/2001
% old definition had ...\nolimits_{#1}}
% \kern -3pt makes nicer distances between the integral sign
% and the domain of integration
%%%%
          {\mathop{\vrule width 5pt height 3 pt depth -2.6pt
                  \kern -6pt \intop}\nolimits_{#1}}%
          {\mathop{\vrule width 5pt height 3 pt depth -2.6pt
                  \kern -6pt \intop}\nolimits_{#1}}%
          {\mathop{\vrule width 5pt height 3 pt depth -2.6pt
                  \kern -6pt \intop}\nolimits_{#1}}}

%%%%%%%%%%%%%%%%%%%%%%%%%%%%%%%%%%%%%%%%%%%%%%%%%%%%%

\numberwithin{theorem}{section} \numberwithin{equation}{section}

\begin{document}

\sloppy

\subjclass[2010]{Primary 53C17; Secondary 46E35, 55Q40, 55Q25}

\sloppy

%\dsp

\begin{abstract}
We provide a sufficient condition for the nontriviality of the
Lipschitz homotopy group of the Heisenberg group, $\pi_m^{\rm
Lip}(\bbbh_n)$, in terms of properties of the classical homotopy group
of the sphere, $\pi_m(\S^n)$. As an application we provide a new
simplified proof of the fact that $\pi_n^{\rm Lip}(\bbbh_n)\neq
\{0\}$, $n=1,2,\ldots$, and we prove a new result that $\pi_{4n-1}^{\rm
Lip}(\bbbh_{2n})\neq \{0\}$ for $n=1,2,\ldots$ The last result is
based on a new generalization of the Hopf invariant. We also prove
that Lipschitz mappings are not dense in the Sobolev space
$W^{1,p}(\M,\bbbh_{2n})$ when $\dim\M\geq 4n$ and $4n-1\leq p<4n$.
\end{abstract}

\maketitle

\section{Introduction}
\label{introduction}

In this paper, we provide further evidence for the role of Lipschitz
homotopy groups in the development of analysis on (non-Riemannian)
metric spaces, and specifically, in the study of Sobolev mappings with
non-Riemannian target spaces such as the sub-Riemannian Heisenberg
group. We link the study of Lipschitz homotopy groups of Heisenberg
groups with classical homotopy theory through a new notion of {\it
rank-essential} homotopy groups (Definition \ref{rank-essential}).
Using this approach, we provide new and simplified proofs of the
nontriviality of certain Lip\-schitz homotopy groups of Heisenberg
groups (previously established in \cite{BaloghF}) as well as new
examples of nontrivial Lipschitz homotopy groups. These results have
applications to the problem of density of Lipschitz mappings in Sobolev
spaces with Heisenberg targets.

The Heisenberg group $\bbbh_n$ is $\bbbr^{2n+1}$ equipped with the so
called Carnot-Carath\'eodory metric $d_{cc}$. For every compact set
$K$ there is a constant $C\geq 1$ such that $C^{-1}|x-y|\leq
d_{cc}(x,y)\leq C|x-y|^{1/2}$ for $x,y\in K$. Thus $\bbbh_n$ is
homeomorphic to $\bbbr^{2n+1}$ and the identity mapping ${\rm id}\,
:\bbbh_n\to\bbbr^{2n+1}$ is locally Lipschitz. However, the inverse
mapping ${\rm id}\, :\bbbr^{2n+1}\to\bbbh_n$ is only locally H\"older
continuous with exponent $1/2$. There is no bi-Lipschitz homeomorphism
between $\bbbh_n$ and $\bbbr^{2n+1}$, because the Hausdorff dimension
of every open set in $\bbbh_n$ is $2n+2$. The following result is well
known.

\begin{proposition}
\label{rank_n}
If $f:\bbbr^k\supset\Omega\to\bbbh_n$ is Lipschitz continuous, where
$\Omega$ is open, then it is locally Lipschitz continuous as a
mapping into $\bbbr^{2n+1}$. Hence $f$ is differentiable a.e.
%It turns out that
Moreover, $\rank df\leq n$ a.e.
\end{proposition}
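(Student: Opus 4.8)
The plan is to prove the three assertions in turn; the first two are routine and the real content lies in the rank bound. For local Lipschitz continuity into $\bbbr^{2n+1}$, fix a compact $K\subset\Omega$. Since $f$ is continuous, $f(K)$ lies in a compact $K'\subset\bbbh_n$, and the comparison $C^{-1}|p-q|\le d_{cc}(p,q)$ on $K'$ combined with the Lipschitz bound $d_{cc}(f(x),f(y))\le L|x-y|$ gives $|f(x)-f(y)|\le CL|x-y|$ near each point of $K$. Thus $f$ is locally Lipschitz as a $\bbbr^{2n+1}$-valued map, and Rademacher's theorem yields differentiability a.e. This settles the first two claims.

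For the rank bound I would argue in three steps. \emph{Step one: at a.e.\ $p$ the differential $df_p$ takes values in the horizontal distribution $\mathcal H_{f(p)}=\ker\alpha_{f(p)}$}, where $\alpha=dt-2\sum_i(x_i\,dy_i-y_i\,dx_i)$ is the contact form. Fix such a $p$ and a vector $v$, and set $g(s)=f(p+sv)$, a curve that is $L$-Lipschitz in $d_{cc}$ and Euclidean-differentiable at $s=0$. Using the left translation $L_{g(0)}$, a polynomial diffeomorphism of $\bbbr^{2n+1}$, write $(\xi(h),\tau(h))=L_{g(0)}^{-1}(g(h))$, so that the Korányi-type estimate $|\xi(h)|+|\tau(h)|^{1/2}\approx d_{cc}(g(0),g(h))\le L|h|$ forces $|\tau(h)|\le CL^2h^2$. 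Hence $\tau$ is differentiable at $0$ with $\tau'(0)=0$, the curve $h\mapsto(\xi(h),\tau(h))$ has horizontal velocity $(\xi'(0),0)\in\mathcal H_e$ at the identity, and applying $dL_{g(0)}$, which carries $\mathcal H_e$ onto $\mathcal H_{g(0)}$ by left-invariance, shows $g'(0)=df_p(v)\in\mathcal H_{f(p)}$. As $v$ is arbitrary, $\operatorname{im}df_p\subseteq\mathcal H_{f(p)}$.

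\emph{Step two: the image is isotropic.} The horizontality $\operatorname{im}df_p\subseteq\ker\alpha$ says precisely that $f^*\alpha=0$ a.e., which in coordinates $f=(f^{x},f^{y},f^{t})$ reads $df^t=2\sum_i(f^{x_i}\,df^{y_i}-f^{y_i}\,df^{x_i})$. Applying the distributional exterior derivative and using the Leibniz rule $d(g\,dh)=dg\wedge dh$, valid a.e.\ for Lipschitz $g,h$ since products of Lipschitz functions are locally Lipschitz and $d\circ d=0$ distributionally, gives $0=4\sum_i df^{x_i}\wedge df^{y_i}$ a.e.; equivalently $f^*(d\alpha)=0$, i.e.\ $d\alpha\bigl(df_p(u),df_p(v)\bigr)=0$ for all $u,v$. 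Thus $\operatorname{im}df_p$ is isotropic for the symplectic form $d\alpha|_{\mathcal H_{f(p)}}$. \emph{Step three:} since $(\mathcal H_{f(p)},d\alpha)$ is a $2n$-dimensional symplectic vector space, every isotropic subspace $W$ satisfies $W\subseteq W^{\perp}$ and $\dim W+\dim W^{\perp}=2n$, so $\dim W\le n$. Therefore $\rank df_p=\dim\operatorname{im}df_p\le n$ a.e.

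The main obstacle is the justification in step two that the pullback commutes with the exterior derivative for a merely Lipschitz map, i.e.\ that the classical identity $f^*(d\alpha)=d(f^*\alpha)$ persists a.e. I would handle this by mollifying $f$ to smooth $\bbbr^{2n+1}$-valued maps $f_\eps\to f$, for which the identity is classical, and passing to the limit using the local uniform bounds on $Df_\eps$; alternatively one invokes directly the distributional Leibniz rule for products of Lipschitz functions. Everything else — the Korányi estimate, Rademacher's theorem, and the symplectic linear algebra — is standard once the horizontal/contact picture is in place.
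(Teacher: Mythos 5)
Your proof is correct and follows essentially the same route as the paper's: local Lipschitz continuity into $\bbbr^{2n+1}$ plus Rademacher, horizontality of the a.e.\ differential, distributional exterior differentiation of the contact equation $f^*\alpha=0$ justified by smooth approximation (exactly the paper's lemma on weak commutation of $d$ with pullback), and the observation that the image of $df_p$ is an isotropic subspace of the $2n$-dimensional symplectic horizontal space, hence has dimension at most $n$. The only departures are cosmetic: you prove horizontality of Lipschitz curves directly via left translation and the Kor\'anyi gauge estimate where the paper cites this fact from the literature, and you phrase the final step as abstract symplectic linear algebra where the paper computes with the complex structure $\cJ$ on $\bbbr^{2n}$ and disposes of the $t$-row separately.
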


Since $\bbbh_n$ is homeomorphic to $\bbbr^{2n+1}$, all of its homotopy
groups are trivial. On the other hand the Heisenberg group, as an
object of study from the viewpoint of geometric analysis on metric
spaces, is naturally equipped with its Carnot-Carath\'eodory metric
$d_{cc}$ (or other metrics bi-Lipschitz equivalent to $d_{cc}$).
As observed above, the Euclidean metric is not of this type. In the
framework of analysis on metric spaces it is natural to consider {\it
Lipschitz homotopy groups}, which are only insensitive to bi-Lipschitz
deformation. The Lipschitz homotopy groups $\pi_n^\lip(X)$ of a metric
space $X$ are defined in the same way as the classical homotopy groups
with the difference that now both mappings and homotopies between them
are required to be Lipschitz.

In the case of Riemannian manifolds homotopy groups and Lipschitz
homotopy groups are the same since continuous mappings can be smoothly
approximated. However for non-smooth spaces they may differ. The
Heisenberg group is an example since its $n$th Lipschitz homotopy
group $\pi^{\rm Lip}_n(\bbbh_n)$ is non-trivial, \cite{BaloghF}.
However, $\pi^{\lip}_m(\bbbh_n)= \{0\}$ for all $1\leq m<n$,
\cite{WengerY1}, and $\pi_m^{\lip}(\bbbh_1)=\{ 0\}$ for all $m\geq 2$,
\cite{WengerY2}. The results from \cite{BaloghF,WengerY1} stated here
did not use the language of Lipschitz homotopy groups, but they were
translated into that language in \cite{Heisenberg}.
These results show an analogy between the Lipschitz homotopy groups of
$\bbbh_n$ and the homotopy groups of the sphere $\S^n$.
The nontriviality of $\pi^\lip_n(\bbbh_n)$ is based on the following
fact (see \cite[Section~4]{BaloghF}, \cite[Theorem~3.2]{Heisenberg},
\cite[Example~3.1]{sullivan}).

\begin{proposition}
\label{sul}
There is a bi-Lipschitz embedding
% of the sphere
$\phi:\S^n\to\bbbh_n$ of the sphere $\S^n$ which is smooth as a
mapping to $\bbbr^{2n+1}$.
\end{proposition}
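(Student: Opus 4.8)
The plan is to reduce the statement to a purely differential-geometric construction together with a general metric lemma. Recall that $\bbbh_n=\bbbr^{2n+1}$ carries the horizontal distribution $H=\ker\alpha$, where $\alpha$ is the standard contact form, and that the Carnot--Carath\'eodory metric $d_{cc}$ measures the length of horizontal curves (those tangent to $H$) via the horizontal norm. Call a smooth map $\phi\colon\S^n\to\bbbr^{2n+1}$ \emph{horizontal} if $d\phi_p(T_p\S^n)\subset H_{\phi(p)}$ for every $p$, equivalently if $\phi^*\alpha=0$. It suffices to produce a smooth horizontal \emph{embedding} $\phi$; the bi-Lipschitz property is then automatic. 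Note that $n$ is exactly the largest dimension for which this is possible, since $d\alpha$ is nondegenerate on $H$, so horizontal (isotropic) subspaces have dimension at most $n$; this is the infinitesimal counterpart of the rank bound in Proposition \ref{rank_n}.

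First I would prove the metric lemma: any smooth horizontal embedding $\phi$ of the compact manifold $\S^n$ is bi-Lipschitz from $(\S^n,|\cdot|)$ onto $(\phi(\S^n),d_{cc})$. For the upper bound, join $p,q\in\S^n$ by a minimizing geodesic $\gamma$; since $\phi$ is horizontal and smooth, $\phi\circ\gamma$ is a horizontal curve, and on the compact set $\phi(\S^n)$ the horizontal norm of a horizontal vector is comparable to its Euclidean norm (the horizontal frame being smooth and nonvanishing). Hence $d_{cc}(\phi(p),\phi(q))\le\operatorname{length}_{cc}(\phi\circ\gamma)\le C\operatorname{length}(\gamma)\le C'|p-q|$. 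For the lower bound I would combine the inequality $d_{cc}(u,v)\ge C^{-1}|u-v|$ on compact sets, stated in the introduction, with the fact that a smooth embedding of a compact manifold is bi-Lipschitz in the Euclidean chordal metric, giving $d_{cc}(\phi(p),\phi(q))\ge C^{-1}|\phi(p)-\phi(q)|\ge c|p-q|$.

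The heart of the matter, and the main obstacle, is therefore the construction of a smooth horizontal embedding $\phi\colon\S^n\to\bbbr^{2n+1}$. Projecting along the center, horizontality says precisely that the planar map $\pi\circ\phi\colon\S^n\to\bbbc^n$ is a Lagrangian immersion for the standard symplectic form and that the center coordinate is a primitive of the pulled-back Liouville form; indeed $\phi^*\alpha=d(t\circ\phi)-(\pi\circ\phi)^*\lambda$ vanishes exactly when $(\pi\circ\phi)^*\lambda$ is exact, which forces $(\pi\circ\phi)^*\omega=0$. I would start from the Whitney Lagrangian immersion of $\S^n$ into $\bbbc^n$, an explicit real-analytic immersion with a single transverse double point, and form its Legendrian lift by integrating the Liouville primitive to define the center coordinate; this is well defined since $\S^n$ is simply connected (and in the case $n=1$ since the total swept signed area cancels). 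The one self-intersection of the planar immersion is resolved in $\bbbh_n$ because the primitive takes two distinct values at the preimages of the double point, so the lift is injective.

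It then remains to verify, by direct computation with the explicit formulas, that $\phi$ is an immersion everywhere, that $\phi^*\alpha=0$, and that the resolved map is globally one-to-one; closing this computation, together with the metric lemma above, completes the proof.
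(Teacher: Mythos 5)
Your proposal is correct, and it is essentially the proof that the paper relies on: the paper does not prove Proposition~\ref{sul} itself but cites \cite{BaloghF}, \cite{Heisenberg} and \cite{sullivan}, and those sources use exactly your two ingredients --- a smooth horizontal (Legendrian) embedding of $\S^n$ obtained as the contact lift of the Whitney Lagrangian immersion (e.g.\ $(x,y)\mapsto (x,xy)$ on $\{|x|^2+y^2=1\}$, whose Liouville primitive $2(1-y^2)\,dy$ integrates to a function taking distinct values at the two poles, so the lift resolves the double point), plus the observation that any smooth horizontal embedding of a compact manifold is automatically bi-Lipschitz for $d_{cc}$ by the curve-length estimate and \eqref{SReq1}. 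The computations you defer are routine and come out exactly as you anticipate, so there is no gap.
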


It was proved in \cite{BaloghF} (see also \cite{Heisenberg}) that such
an embedding cannot be extended to a  Lipschitz map
$\Phi:\B^{n+1}\to\bbbh_n$. 
Another simpler proof of this fact is provided below.  See the proof
that $\pi_n(\S^n)$ is rank-essential later in this section. Thus
$\pi^{\rm Lip}_n(\bbbh_n)\neq \{0\}$. To emphasize the analogy between
$\pi_m(\S^n)$ and $\pi_m^{\rm Lip}(\bbbh_n)$ it was asked in
\cite[Question~4.16]{Heisenberg} whether any bi-Lipschitz embedding
$\phi:\S^n\to\bbbh_n$ induces an injective homomorphism
$\pi_m(\S^n)\to \pi_m^{\rm Lip}(\bbbh_n)$. Actually the authors of the
question expected that if a smooth map $f:\S^m\to \S^n$ is not
homotopic to a constant map, $0\neq [f]\in \pi_m(\S^n)$, then the map
$g=\phi\circ f:\S^m\to \bbbh_n$ cannot be extended to a Lipschitz map
$G:\B^{m+1}\to \bbbh_n$. As will be explained below there were
strong reasons based on the Sard theorem to believe in this
conjecture, but surprisingly the conjecture is false! 
%Namely, 
Recently, Wenger and Young \cite[Theorem~1]{WengerY2} proved the
following result.

\begin{theorem}
\label{WY}
If $\alpha:\S^n\to\bbbh_n$ and $\beta:\S^m\to \S^n$ are Lipschitz and
$n+2\leq m<2n-1$, then the map $g=\alpha\circ\beta:\S^m\to\bbbh_n$ can
be extended to a Lipschitz map $G:\B^{m+1}\to\bbbh_n$.
\end{theorem}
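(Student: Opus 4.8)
The plan is to reformulate the extension problem analytically, using the contact structure of $\bbbh_n$, and thereby reduce it to a purely Euclidean question about isotropic maps into $(\bbbr^{2n},\omega_0)$, where $\omega_0=\sum_{i=1}^n dx_i\wedge dy_i$ is the standard symplectic form. First I would recall the characterization underlying Proposition \ref{rank_n}: writing a map into $\bbbh_n=\bbbr^{2n+1}$ as $F=(F^H,F^t)$ with horizontal part $F^H$ valued in $\bbbr^{2n}$ and vertical part $F^t$, a map $F$ that is Lipschitz into $\bbbr^{2n+1}$ is Lipschitz into $\bbbh_n$ if and only if $F^*\omega=0$ a.e., where $\omega=dt-\theta$ is the standard contact form and $d\theta=\omega_0$. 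Applying this to $\alpha$ shows that $\alpha^H$ is isotropic, $(\alpha^H)^*\omega_0=0$, hence so is $g^H:=\alpha^H\circ\beta$; this is the analytic content of the bound $\rank\,df\le n$.

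The upshot is the following reduction: it suffices to produce a Lipschitz extension $G^H:\B^{m+1}\to\bbbr^{2n}$ of $g^H$ that is again isotropic, $(G^H)^*\omega_0=0$ a.e. Indeed, the $1$-form $\eta:=(G^H)^*\theta$ is then closed on the contractible ball $\B^{m+1}$, so $\eta=dG^t$ for a Lipschitz function $G^t$; since $\eta|_{\S^m}=(g^H)^*\theta=dg^t$ and $\S^m$ is connected, the difference $G^t|_{\S^m}-g^t$ is a constant, which I absorb into $G^t$, and then $G=(G^H,G^t)$ is the desired Lipschitz extension into $\bbbh_n$.

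Next I would try to build $G^H$ by coning. Since $\B^{n+1}$ is convex, $\beta$ extends to a Lipschitz map $B:\B^{m+1}\to\B^{n+1}$ (e.g. $B(r\xi)=r\,\beta(\xi)$), and the radial cone $\Psi(r,x)=r\,\alpha^H(x)$ gives a Lipschitz extension $\Psi:\B^{n+1}\to\bbbr^{2n}$ of $\alpha^H$. A direct computation, using the exactness $(\alpha^H)^*\theta=d\alpha^t$ on the simply connected sphere $\S^n$, shows $\Psi^*\omega_0=dP\wedge dQ$ with $P=|z|^2$ and $Q=\alpha^t(z/|z|)$. Setting $G^H=\Psi\circ B$, the isotropy of $G^H$ is therefore equivalent to the single condition that the ``shadow'' $(P,Q)\circ B:\B^{m+1}\to\bbbr^2$ have rank $\le 1$ a.e.; on $\S^m$ this shadow equals $(1,g^t)$, which already has rank $\le 1$, so the requirement is consistent with the boundary data.

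The crux, and the step where the restriction $n+2\le m<2n-1$ must enter, is to choose the extension $B$ of $\beta$ so that its shadow $(P,Q)\circ B$ is degenerate (rank $\le 1$) while $B$ still realizes the homotopically nontrivial class $\beta$. These demands pull in opposite directions: a rank $\le 1$ shadow forces the image $B(\B^{m+1})$ to collapse onto a $1$-parameter family of level sets of $(P,Q)$, whereas $0\neq[\beta]\in\pi_m(\S^n)$ forbids $B$ from compressing into $\S^n$ (which is what a constant-radius shadow would require); indeed for $m=n$ the boundary degree forces $B$ surjective and the shadow to have rank $2$, consistent with the failure of the statement in that degree and with $\pi_n^{\lip}(\bbbh_n)\neq\{0\}$. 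I expect that $m<2n-1$ provides the room in $\bbbr^{2n}$ to spread $B$ over a degenerate set of codimension $\ge 1$ that still carries the class $\beta$, while $m\ge n+2$ supplies the connectivity needed to perform the corrections that flatten the shadow to rank one without disturbing the boundary values. Making this simultaneous construction precise --- presumably by an inductive, skeleton-by-skeleton modification of an initial extension, or by a flexibility (h-principle) argument for the degenerate relation $\{\rank\le 1\}$ in this metastable range --- is the main obstacle and the heart of the theorem.
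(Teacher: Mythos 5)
First, a point of comparison: the paper itself does not prove Theorem \ref{WY} at all --- it is quoted from Wenger and Young \cite[Theorem~1]{WengerY2}. So your proposal cannot be matched against an internal argument of the paper and has to stand on its own; it does not, because the step carrying all of the content of the theorem is missing.

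What you do have is correct and cleanly put: the equivalence ``Lipschitz into $\bbbh_n$ $\Leftrightarrow$ Euclidean Lipschitz plus contact equation a.e.,'' the resulting reduction to finding an a.e.\ isotropic Lipschitz extension $G^H$ of $g^H$ (the vertical component then comes by integrating the bounded, weakly closed $1$-form $(G^H)^*\theta$ on the contractible ball, and the boundary constant can be absorbed), and the computation $\Psi^*\omega_0=dP\wedge dQ$ for the radial cone $\Psi$ over $\alpha^H$ are all sound, modulo routine weak-derivative technicalities of the kind handled by Lemma \ref{pr:weakdfastcommute}. But the problem you arrive at --- produce a Lipschitz extension $B:\B^{m+1}\to\B^{n+1}$ of $\beta$ whose planar shadow $\left(|B|^2,\alpha^t(B/|B|)\right)$ has rank $\le 1$ a.e.\ --- is not easier than the theorem itself, and you offer no construction: the paragraph beginning ``The crux'' states the difficulty rather than resolving it. The decisive symptom is that the hypotheses $n+2\le m<2n-1$ are never actually used; they enter only through a heuristic about ``room'' and ``connectivity.'' In fact the upper bound $m<2n-1$, i.e.\ $m\le 2n-2$, is exactly the Freudenthal range in which the suspension homomorphism $\pi_{m-1}(\S^{n-1})\to\pi_m(\S^n)$ is surjective, so that $\beta$ may be replaced (up to homotopy, harmlessly after composing with the Lipschitz map $\alpha$) by a suspension; this stable-range structure, combined with the Lipschitz $(n-1)$-connectedness of $\bbbh_n$ from Wenger and Young's earlier work, is what the cited proof exploits, and nothing in your outline makes contact with it. Note also that rank-degenerate Lipschitz maps realizing topologically nontrivial boundary data are precisely the Kaufman-type constructions that Wenger and Young require for the harder case $m=n+1$ of their Theorem 2; asserting that such a construction exists in your setting ``presumably by an h-principle'' is a restatement of the theorem, not a proof of it. Finally, by forcing $G^H$ to factor as $\Psi\circ B$ through the $(n+1)$-dimensional cone on $\alpha^H$ you may have made the problem strictly harder than necessary, since nothing says an isotropic extension must have this special factored form.
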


In particular $\pi_7(\S^5)=\Z_2$, so there is a smooth map $f:\S^7\to
\S^5$ that is not homotopic to a constant map, but if $\phi:\S^5\to
\bbbh_5$ is a bi-Lipschitz embedding, then  $g=\phi\circ f:\S^7\to
\bbbh_5$ has a Lipschitz extension $G:\B^{8}\to \bbbh_5$. This is just
one example, but the above theorem leads to many more examples. It
just suffices to look at the table of the homotopy groups of the
spheres to find cases when $\pi_m(\S^n)\neq \{0\}$ and $n+2\leq
m<2n-1$. It is important to note here that it does not necessarily
imply that $\pi_m^{\rm Lip}(\bbbh_n)=\{0\}$, because in this
construction we consider mappings to $\bbbh_n$ that factor through
$\S^n$ via a bi-Lipschitz embedding into $\bbbh_n$. Perhaps there are
other mappings from $\S^m$ to $\bbbh_n$ that are not Lipschitz
homotopic to constant mappings.

\begin{definition}
\label{rank-essential}
We say that the homotopy group $\pi_m(\S^n)$ is {\em rank-essential}
if there is $f\in C^\infty(\S^m,\S^n)$ with the following property
(R): for every Lipschitz extension $F:\B^{m+1}\to\bbbr^{n+1}$,
$F|_{\partial \B^{m+1}}=f$, we have
$$
\rank dF=n+1
$$
on a set of positive measure.
\end{definition}

Clearly if $\pi_m(\S^n)$ is rank-essential, then $\pi_m(\S^n)\neq \{0\}$.
The definition is motivated by the following result.
\begin{theorem}
\label{main1}
If $\pi_m(\S^n)$ is rank-essential, then $\pi_m^{\rm Lip}(\bbbh_n)\neq \{0\}$.
\end{theorem}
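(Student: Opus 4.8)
The plan is to use the witness $f$ for rank-essentiality together with the bi-Lipschitz embedding $\phi\colon\S^n\to\bbbh_n$ of Proposition~\ref{sul} to manufacture a Lipschitz map $g\colon\S^m\to\bbbh_n$ that admits no Lipschitz extension to the ball $\B^{m+1}$. Recall that a Lipschitz map $\S^m\to\bbbh_n$ represents the trivial element of $\pi_m^{\rm Lip}(\bbbh_n)$ precisely when it extends to a Lipschitz map $\B^{m+1}\to\bbbh_n$ (the characterization already invoked in the introduction for $\pi_n(\S^n)$), so producing such a $g$ gives $\pi_m^{\rm Lip}(\bbbh_n)\neq\{0\}$. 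Concretely, let $f\in C^\infty(\S^m,\S^n)$ have property~(R) and set $g=\phi\circ f$; this is Lipschitz, since $f$ is smooth and $\phi$ is Lipschitz. I argue by contradiction, assuming that $g$ extends to a Lipschitz map $G\colon\B^{m+1}\to\bbbh_n$.

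Next I would flatten the target back to $\bbbr^{n+1}$. Because $\phi$ is a smooth embedding of the compact manifold $\S^n$ into $\bbbr^{2n+1}$, its inverse $\phi^{-1}\colon\phi(\S^n)\to\S^n$ is Lipschitz with respect to the Euclidean metrics; composing with the standard inclusion $\iota\colon\S^n\hookrightarrow\bbbr^{n+1}$ gives a Lipschitz map $\iota\circ\phi^{-1}$ defined on the compact set $\phi(\S^n)\subset\bbbr^{2n+1}$. By the Lipschitz extension theorem I extend it to a (Euclidean) Lipschitz map $\Psi\colon\bbbr^{2n+1}\to\bbbr^{n+1}$ satisfying $\Psi\circ\phi=\iota$. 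Set $F=\Psi\circ G\colon\B^{m+1}\to\bbbr^{n+1}$. By Proposition~\ref{rank_n}, $G$ is Lipschitz as a map into $\bbbr^{2n+1}$ (compactness of $\B^{m+1}$ upgrades the local statement to a global one, using $|x-y|\leq C\,d_{cc}(x,y)$), so $F$ is Lipschitz as a composition of Euclidean Lipschitz maps. On the boundary $\partial\B^{m+1}=\S^m$ we have $G=g=\phi\circ f$, whence $F|_{\partial\B^{m+1}}=\Psi\circ\phi\circ f=\iota\circ f=f$; thus $F$ is an admissible competitor in Definition~\ref{rank-essential}.

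It remains to contradict property~(R) by showing that $\rank dF\leq n$ a.e. Proposition~\ref{rank_n} gives $\rank dG\leq n$ a.e., so it suffices to prove the chain-rule rank bound $\rank dF\leq\rank dG$ a.e. This is the technical heart of the argument, and the main obstacle, because $\Psi$ need not be differentiable along the image of $G$. I would sidestep this difficulty by working at a point $x_0$ at which both $G$ and the Lipschitz map $F$ are differentiable (almost every point qualifies, by Rademacher's theorem). Writing $G(x_0+tv)=G(x_0)+t\,dG(x_0)v+o(t)$ and using the Lipschitz bound on $\Psi$ to absorb the $o(t)$ term, one obtains
\[
dF(x_0)v=\lim_{t\to 0}\frac{\Psi\bigl(G(x_0)+t\,dG(x_0)v\bigr)-\Psi(G(x_0))}{t},
\]
so that $dF(x_0)v$ depends on $v$ only through the vector $dG(x_0)v$. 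Hence $\ker dG(x_0)\subseteq\ker dF(x_0)$, which yields $\rank dF(x_0)\leq\rank dG(x_0)\leq n$. Therefore $\rank dF\leq n$ a.e., contradicting property~(R), which forces $\rank dF=n+1$ on a set of positive measure for every such extension. This contradiction shows that $g$ admits no Lipschitz extension to $\B^{m+1}$, so $[g]\neq 0$ and $\pi_m^{\rm Lip}(\bbbh_n)\neq\{0\}$.
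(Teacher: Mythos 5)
Your proof is correct, and it follows the paper's overall strategy: compose the rank-essential witness $f$ with the embedding $\phi$ of Proposition~\ref{sul}, assume a Lipschitz extension $G:\B^{m+1}\to\bbbh_n$ exists, flatten back to $\bbbr^{n+1}$ by an extension $\Psi$ of $\phi^{-1}$, and contradict property~(R) via Proposition~\ref{rank_n}. The one genuine divergence is in how $\Psi$ is produced and how the rank bound $\rank d(\Psi\circ G)\leq \rank dG$ is justified. The paper exploits the smoothness of $\phi$ as a map into $\bbbr^{2n+1}$: it extends the smooth map $\phi^{-1}$ to a \emph{smooth} $\Psi:\bbbr^{2n+1}\to\bbbr^{n+1}$, after which the ordinary chain rule $dF(x)=d\Psi(G(x))\circ dG(x)$ at points of differentiability of $G$ gives the rank inequality with no further work. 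You instead take a merely \emph{Lipschitz} extension $\Psi$ (via the Lipschitz extension theorem), which forces you to replace the chain rule by the pointwise argument that at a common point of differentiability of $F$ and $G$ one has
$$
dF(x_0)v=\lim_{t\to 0}\frac{\Psi\bigl(G(x_0)+t\,dG(x_0)v\bigr)-\Psi(G(x_0))}{t},
$$
so that $dF(x_0)v$ depends only on $dG(x_0)v$, whence $\ker dG(x_0)\subseteq\ker dF(x_0)$ and $\rank dF\leq\rank dG$ a.e.; this absorption-of-$o(t)$ argument is valid. What your route buys is independence from the smoothness of $\phi^{-1}$ and of its extension: indeed, the Euclidean Lipschitzness of $\phi^{-1}$ already follows from the bi-Lipschitz property of $\phi$ together with $|p-q|\leq C\,d_{cc}(p,q)$ on compact sets (a cleaner justification than your appeal to the smooth embedding, which tacitly uses that $\phi$ is an immersion), so your argument in fact works for an arbitrary bi-Lipschitz $\phi:\S^n\to\bbbh_n$. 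What the paper's route buys is brevity: with $\Psi$ smooth, the rank inequality is immediate from the classical chain rule.
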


\begin{proof}
Suppose to the contrary that $\pi_m(\S^n)$ is rank-essential and that
$\pi_m^{\rm Lip}(\bbbh_n)=\{0\}$. Let $f\in C^\infty(\S^m,\S^n)$ be a
mapping with property (R). Since $\pi_m^{\rm Lip}(\bbbh_n)=\{0\}$,
$g=\phi\circ f:\S^m\to\bbbh_n$ has a Lipschitz extension
$G:\B^{m+1}\to\bbbh_n$. Here $\phi:\S^n\to\bbbh_n$ is a bi-Lipschitz
embedding from Proposition~\ref{sul}. By Proposition~\ref{rank_n}
$\rank dG\leq n$ a.e., where now we regard $G$ as a mapping into
$\bbbr^{2n+1}$. The mapping $\phi^{-1}:\phi(\S^n)\to
\S^n\subset\bbbr^{n+1}$ is smooth and hence admits a smooth extension
$\Psi:\bbbr^{2n+1}\to\bbbr^{n+1}$. Clearly $F=\Psi\circ
G:\B^{m+1}\to\bbbr^{n+1}$ is a Lipschitz extension of $\Psi\circ
G|_{\partial \B^{m+1}}=f$. Since $\rank dG\leq n$ a.e., we conclude
that $\rank dF=\rank d(\Psi\circ G)\leq n$, which contradicts property
(R) of $f$.
\end{proof}

From Theorem~\ref{WY} and the proof of
Theorem~\ref{main1} we obtain the following corollary.
(See \cite[Theorem~2]{WengerY2} for a stronger statement where it is
shown that the corollary is true also for $m=n+1$.)
\begin{corollary}
\label{WY2}
If $n+2\leq m<2n-1$, then $\pi_m(\S^n)$ is not rank-essential.
\end{corollary}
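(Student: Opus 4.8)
The plan is to read the proof of Theorem~\ref{main1} not as an argument by contradiction but as a direct construction, and to feed into it the Lipschitz extension supplied by Theorem~\ref{WY}. To show that $\pi_m(\S^n)$ is not rank-essential I must produce, for \emph{every} $f\in C^\infty(\S^m,\S^n)$, a Lipschitz extension $F:\B^{m+1}\to\bbbr^{n+1}$ with $F|_{\partial\B^{m+1}}=f$ and $\rank dF\le n$ a.e.; the existence of such an $F$ is exactly the failure of property (R) for $f$, so if it holds for all $f$ then no mapping witnesses rank-essentiality.

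First I would fix an arbitrary $f\in C^\infty(\S^m,\S^n)$. Being smooth on a compact manifold, $f$ is Lipschitz. Let $\phi:\S^n\to\bbbh_n$ be the bi-Lipschitz embedding of Proposition~\ref{sul}; in particular $\phi$ is Lipschitz as a map into $\bbbh_n$. Since $n+2\le m<2n-1$, Theorem~\ref{WY} applies with $\alpha=\phi$ and $\beta=f$, yielding a Lipschitz extension $G:\B^{m+1}\to\bbbh_n$ of $g=\phi\circ f$.

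Next I would replicate the second half of the proof of Theorem~\ref{main1}. Regarding $G$ as a map into $\bbbr^{2n+1}$, Proposition~\ref{rank_n} gives $\rank dG\le n$ a.e. The inverse $\phi^{-1}:\phi(\S^n)\to\S^n\subset\bbbr^{n+1}$ is smooth, so it extends to a smooth map $\Psi:\bbbr^{2n+1}\to\bbbr^{n+1}$, and $F=\Psi\circ G$ is a Lipschitz map $\B^{m+1}\to\bbbr^{n+1}$. On the boundary $F=\Psi\circ\phi\circ f=\phi^{-1}\circ\phi\circ f=f$, so $F$ extends $f$; and by the chain rule $\rank dF=\rank d(\Psi\circ G)\le\rank dG\le n$ a.e. Thus $f$ fails property (R). Since $f$ was arbitrary, no mapping in $C^\infty(\S^m,\S^n)$ has property (R), which is precisely the assertion that $\pi_m(\S^n)$ is not rank-essential.

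The argument carries no genuine obstacle of its own; all the analytic difficulty is imported through Theorem~\ref{WY}, whose construction of a Lipschitz filling of $\phi\circ f$ in the dimension range $n+2\le m<2n-1$ is the hard input. The only point requiring care is the observation that the proof of Theorem~\ref{main1} is effectively a biconditional at the level of fillings: a Lipschitz $\bbbh_n$-extension of $\phi\circ f$ is converted, via $\Psi$, into a rank-at-most-$n$ Lipschitz $\bbbr^{n+1}$-extension of $f$, so the conclusion of Theorem~\ref{WY} translates directly into the negation of property (R).
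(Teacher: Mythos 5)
Your proposal is correct and is precisely the argument the paper intends: the paper derives Corollary~\ref{WY2} by feeding the Lipschitz filling $G:\B^{m+1}\to\bbbh_n$ of $\phi\circ f$ provided by Theorem~\ref{WY} into the second half of the proof of Theorem~\ref{main1}, using Proposition~\ref{rank_n} to get $\rank dG\leq n$ a.e.\ and composing with the smooth extension $\Psi$ of $\phi^{-1}$ to produce a Lipschitz extension $F=\Psi\circ G$ of $f$ with $\rank dF\leq n$ a.e., so that every smooth $f$ fails property (R). Your write-up fills in exactly these details, including the correct unwinding of the negation of rank-essentiality, so there is nothing to add.
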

In particular if $n+2\leq m<2n-1$ and $\pi_m(\S^n)\neq \{ 0\}$
(for example $\pi_7(\S^5)=\Z_2$), then every smooth mapping
$f:\S^m\to \S^n$ such that $[f]\neq 0$ admits a Lipschitz extension 
$F:\B^{m+1}\to\bbbr^{n+1}$ with $\rank dF\leq n$ a.e., despite the
fact that the image of $F$ contains the unit $(n+1)$-dimensional ball.
Indeed, otherwise we could pick a point in $\B^{n+1}\setminus
F(\B^{n+1})$ and retract $F$ onto $\S^{n}$.

The main result of the paper reads as follows.
\begin{theorem}
\label{main2}
The homotopy groups $\pi_n(\S^n)$ and $\pi_{4n-1}(\S^{2n})$ are rank-essential
for $n=1,2,3,\ldots$ and hence
$\pi_n^{\rm Lip}(\bbbh_n)\neq \{0\}$ and $\pi_{4n-1}^{\rm
  Lip}(\bbbh_{2n})\neq \{0\}$.
\end{theorem}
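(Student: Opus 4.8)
The plan is to prove only that the two homotopy groups are rank-essential; the nontriviality of the corresponding Lipschitz homotopy groups then follows immediately from Theorem~\ref{main1}. In both cases the strategy is the same. I produce a smooth map $f$ carrying a nonzero integer-valued homotopy invariant (the degree for $\pi_n(\S^n)$, the Hopf invariant for $\pi_{4n-1}(\S^{2n})$), and I rewrite that invariant as an integral over the filling ball whose integrand is built from the top-order (target-dimensional) Jacobian minors of an arbitrary Lipschitz extension $F$. Those minors vanish exactly on the set where $\rank dF$ drops below the target dimension, so nonvanishing of the invariant forces $\rank dF$ to equal the full target dimension on a set of positive measure, which is precisely property (R).

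For $\pi_n(\S^n)$ take $f=\id\colon\S^n\to\S^n$, of degree one. Fix a smooth $n$-form $\omega$ on $\bbbr^{n+1}$ whose restriction to $\S^n$ is a volume form of total mass one, with $d\omega=\rho\,\vol$ where $\rho\ge 0$ is supported in the open unit ball and $\int\rho=1$. For any Lipschitz extension $F\colon\B^{n+1}\to\bbbr^{n+1}$ of $f$, the analytic formula for the Brouwer degree (valid for Lipschitz maps by approximation) gives
\[
1=\deg f=\int_{\S^n} F^*\omega=\int_{\B^{n+1}} d(F^*\omega)=\int_{\B^{n+1}}(\rho\circ F)\,\det dF\,dx .
\]
If $\rank dF\le n$ a.e., then $\det dF=0$ a.e. and the right-hand side is zero, a contradiction. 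Hence $f$ has property (R).

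The case $\pi_{4n-1}(\S^{2n})$ is the genuinely new one. Choose a smooth $f\colon\S^{4n-1}\to\S^{2n}$ with nonzero Hopf invariant $H(f)$; such $f$ exists for every $n$ (for $n=1$ the Hopf map has $H=1$, and in general the Whitehead product $[\iota_{2n},\iota_{2n}]$ has $H=\pm 2$ because $2n$ is even). Arguing by contradiction, suppose $F\colon\B^{4n}\to\bbbr^{2n+1}$ is a Lipschitz extension with $\rank dF\le 2n$ a.e. Pick $\omega$ as above, now a smooth $2n$-form on $\bbbr^{2n+1}$ restricting on $\S^{2n}$ to a unit-mass volume form $\omega_0$ with $d\omega=\rho\,\vol$, and set $\Phi:=F^*\omega$. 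The coefficients of $d\Phi=F^*(d\omega)=(\rho\circ F)\,F^*\vol$ are the $(2n+1)\times(2n+1)$ minors of $dF$, which vanish a.e. under the rank hypothesis, so $\Phi$ is closed. Since $\B^{4n}$ is contractible, $\Phi=d\eta$, and on the boundary $d(\eta|_{\S^{4n-1}})=\Phi|_{\S^{4n-1}}=f^*\omega_0$, so $\eta|_{\S^{4n-1}}$ is a primitive of $f^*\omega_0$ and
\[
H(f)=\int_{\S^{4n-1}}\eta\wedge\Phi=\int_{\B^{4n}} d(\eta\wedge\Phi)=\int_{\B^{4n}}\Phi\wedge\Phi=\int_{\B^{4n}}F^*(\omega\wedge\omega)=0,
\]
since $\omega\wedge\omega$ is a $4n$-form on the $(2n+1)$-dimensional space $\bbbr^{2n+1}$ and hence vanishes identically. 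This contradicts $H(f)\ne 0$, so $f$ has property (R). Note that this mechanism is unavailable for groups such as $\pi_7(\S^5)$ (which by Corollary~\ref{WY2} is not rank-essential), precisely because those classes carry no primary integral invariant expressible through the target-top minors.

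The main obstacle is that this entire differential-form calculus must be justified for a merely Lipschitz $F$, whose pullbacks $\Phi$ and $d\Phi$ have only $L^\infty$ coefficients. The delicate point is the closedness of $\Phi$ as a current: a priori the distributional derivative of $F^*\omega$ could carry a singular defect invisible to the pointwise minors. I would remove this by mollifying, $F_\varepsilon=F*\psi_\varepsilon$, for which the smooth identity $d(F_\varepsilon^*\omega)=(\rho\circ F_\varepsilon)\,F_\varepsilon^*\vol$ holds classically; since $dF_\varepsilon\to dF$ a.e.\ and boundedly, the minors (polynomials in the gradient) converge to the minors of $dF$, which are zero a.e., so by dominated convergence $d\Phi=0$ in the sense of distributions. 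One must then run the Poincaré lemma, the boundary trace $\eta|_{\S^{4n-1}}$, and Stokes' theorem at the level of these bounded currents (or, equivalently, keep everything at the level of the smooth approximants and pass to the limit only at the end, controlling the boundary term so that it still computes $H(f)$). Verifying these approximation and trace statements, rather than the formal computation displayed above, is where the real work lies.
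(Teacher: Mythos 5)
Your overall strategy is the same as the paper's: for $\pi_n(\S^n)$ a degree argument, and for $\pi_{4n-1}(\S^{2n})$ a Whitehead-formula Hopf invariant that a rank-$\leq 2n$ Lipschitz extension would force to vanish. The two key algebraic points of your computation --- that $d(F^*\omega)=F^*(d\omega)=0$ under the rank condition, and that the final integrand $F^*(\omega\wedge\omega)$ vanishes --- are exactly the ones the paper exploits (the latter at the end of the proof of Proposition~\ref{pr:HIhomotopicinv}). Your mollification argument for the weak closedness of $\Phi=F^*\omega$ is sound and is precisely Lemma~\ref{pr:weakdfastcommute} combined with Lemma~\ref{pr:p:pullbackfrank2}; your source of a map with nonzero Hopf invariant (Hopf map, Whitehead square) plays the role of Lemma~\ref{la:hopffibration}. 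For $\pi_n(\S^n)$ the paper is more economical than your degree formula: it uses the covering property $\B^{n+1}\subset F(\B^{n+1})$ together with Federer's area inequality $|F(\B^{n+1})|\leq\int_{\B^{n+1}}|\det dF|$, so no pullback forms, traces, or boundary integrals arise at all; your version is correct in substance but requires the same kind of boundary justification discussed next.

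The step you defer --- the boundary trace and Stokes' theorem on $\B^{4n}$ --- is a genuine gap, and it is exactly the part that the paper's machinery is designed to circumvent rather than verify. The difficulty is concrete: $\Phi=F^*\omega$ has only $L^\infty$ coefficients defined Lebesgue-a.e., and $\S^{4n-1}$ is a null set, so the identity $\Phi|_{\S^{4n-1}}=f^*\omega_0$ has no pointwise meaning; the Sobolev primitive $\eta$ supplied by an $L^p$-Poincar\'e lemma has a trace only in a weak sense, and nothing guarantees that $d(\eta|_{\S^{4n-1}})=f^*\omega_0$ nor that the trace integral computes $H(f)$; and your fallback of ``keeping everything at the level of smooth approximants'' does not work as stated, because the mollified maps $F_\eps$ violate the rank condition, so $F_\eps^*\omega$ is not closed (it has no primitive $\eta_\eps$), and the a.e.\ convergence $dF_\eps\to dF$ gives no information on the null set $\S^{4n-1}$. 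The paper escapes this by never touching the boundary: it reparametrizes the radial homotopy $H(t,\theta)=F(t\theta)$ by a function $s(t)$ so that $H$ is constant in $t$ near $t=0$ and $t=1$; it works on the open cylinder $(0,1)\times\S^{4n-1}$, embedded in the closed manifold $\S^1\times\S^{4n-1}$ so that the $L^p$-Hodge decomposition (Proposition~\ref{la:poincare}) yields a $W^{1,2}_{\rm loc}$ primitive; it uses Fubini to select good interior slices $t_0,t_1$ lying in the regions of constancy, where restriction commutes with $d$ and with the pullback, so the slice integrals are literally $\HI_\alpha f$ and $\HI_\alpha g$ (well-definedness of the Whitehead integral for Sobolev primitives being Proposition~\ref{pr:HIchoiceofomegairrel}); only then does it apply Stokes between the two good slices, approximating the primitive (not the map $F$) by smooth forms. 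These devices --- constancy near the ends, Fubini slicing, and independence of the Hopf integral from the choice of Sobolev primitive --- are the missing ingredients your proposal would need in order to become a proof.
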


According to the Serre finiteness theorem \cite{Serre53} these are the
only infinite homotopy groups of spheres. The
proof of Theorem~\ref{main2} is based on differential forms. It is
done explicitly in the case of $\pi_{4n-1}^{\rm Lip}(\bbbh_{2n})$ and
implicitly in the case of $\pi_n^{\rm Lip}(\bbbh_n)$. In the latter
case we use the fact that the ball cannot be retracted to the boundary
which can be easily proved with the help of differential forms. The
language of differential forms is useful when one wants to detect the
rational homotopy groups of $CW$ complexes $\pi_m(X)\otimes\Q$. This
is the so-called rational homotopy theory discovered by Sullivan
\cite{RHT}. However in the case of spheres the rational homotopy
groups $\pi_m(\S^n)\otimes\Q$ are nontrivial exactly in the cases
covered by Theorem~\ref{main2}. This follows from the Serre finiteness
theorem. It would be very interesting to see if $\pi_m(\S^n)$ is
rank-essential for other values of $m$ and $n$.

The fact that $\pi_n^{\rm Lip}(\bbbh_n)\neq \{0\}$ was proved in
\cite{BaloghF}, but the proof presented here is different
and simpler since it does not refer to pure unrectifiability  of the
Heisenberg group, neither to the degree theory.
Another proof of an even more general result that also does not employ pure 
unrectifiability was given in \cite{RigotW}.

\begin{proof}[Proof that $\pi_n(\S^n)$ is rank-essential]
Let $f:\S^n\to \S^n$ be the identity map. If $F:\B^{n+1}\to\bbbr^{n+1}$
is a Lipschitz extension, then $\B^{n+1}\subset F(\B^{n+1})$.
In particular the image of $F$ has positive $(n+1)$-dimensional measure.
On the other hand for Lipschitz mappings we have (\cite[Theorem 3.2.3]{Federer})
$$
|F(\B^{n+1})|\leq \int_{\B^{n+1}} |\det dF|.
$$
Since $|F(\B^{n+1})|>0$ we deduce that $\rank dF\geq n+1$ on a set of positive measure.
Thus $\pi_n(\S^n)$ is rank-essential and hence $\pi_n^{\rm Lip}(\bbbh_n)\neq \{0\}$.
\end{proof}

In the last step of the proof we could refer to the Sard theorem for
Lipschitz mappings \cite[Theorem~7.6]{mattila} instead of the integral
inequality used above. Assuming $\rank dF\leq n$ a.e.\ we would
conclude that all points in $\B^{n+1}$ are critical and hence 
%the measure of 
$|F(\B^{n+1})|=0$.
%as the measure of the image of the critical set 
%equals zero.

We now investigate the connection to the Sard theorem in greater detail.

\begin{proposition}
Let $f\in C^\infty(\S^m,\S^n)$, $m\geq n$, $0\neq [f]\in \pi_m(\S^n)$. Let
$$
F:\B^{m+1}\to\bbbr^{n+1},\quad F|_{\partial \B^{m+1}}=f,
$$
be of class $C^{k,1}$, $k\geq \max\{m-n,1\}$. Then $\rank dF=n+1$
on an open set.
\end{proposition}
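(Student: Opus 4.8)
The plan is to argue by contradiction, combining an elementary topological obstruction with a sharp form of the Morse--Sard theorem. Since $k\geq 1$, the map $F$ is $C^1$, so $dF$ is continuous and its rank is lower semicontinuous. As the target $\bbbr^{n+1}$ has dimension $n+1$, the set $U=\{x\in\B^{m+1}:\rank dF(x)=n+1\}$ is open, and the assertion reduces to showing that $U\neq\emptyset$. I would assume instead that $\rank dF\leq n$ at every point of $\B^{m+1}$; then every point is a critical point of $F$, so $F(\B^{m+1})$ coincides with the set of critical values of $F$.

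The main analytic input is a Sard-type estimate. The excess dimension is $(m+1)-(n+1)=m-n$, and by hypothesis $F$ is of class $C^{k,1}$ with $k\geq\max\{m-n,1\}$, hence at least $C^{m-n,1}$. The refined Morse--Sard theorem in the $C^{k,1}$ scale (Bates' improvement, in which a Lipschitz condition on the top-order derivative replaces the extra derivative demanded by the classical statement) then asserts that the critical values of such an $F$ form a set of Lebesgue measure zero in $\bbbr^{n+1}$. Under the contradiction hypothesis this gives $|F(\B^{m+1})|=0$.

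To contradict this I would establish the topological fact that the image of $F$ contains the open unit ball. Fix $p\in\bbbr^{n+1}$ with $|p|<1$ and suppose $p\notin F(\B^{m+1})$. Then $F$ maps $\B^{m+1}$ into $\bbbr^{n+1}\setminus\{p\}$, and composing with the radial retraction $\rho_p(x)=(x-p)/|x-p|$ onto $\S^n$ yields a continuous map $\rho_p\circ F:\B^{m+1}\to\S^n$ whose boundary restriction is $\rho_p\circ f$. Since $|p|<1$, the formula $y\mapsto(y-tp)/|y-tp|$, $t\in[0,1]$, defines a homotopy on $\S^n$ from $\rho_p|_{\S^n}$ to the identity, so $\rho_p\circ f\simeq f$. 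Thus a map homotopic to $f$ extends continuously over $\B^{m+1}$ into $\S^n$, forcing $[f]=0$, contrary to hypothesis. Hence every such $p$ lies in $F(\B^{m+1})$; since $F(\B^{m+1})$ is compact it follows that $\B^{n+1}\subset F(\B^{m+1})$, and in particular $|F(\B^{m+1})|>0$. This contradicts the previous paragraph, so $\rank dF=n+1$ at some point, and by openness of $U$ the equality holds on a nonempty open set.

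The step I expect to be the crux is the invocation of the sharp Morse--Sard theorem: the entire force of the hypothesis $C^{k,1}$ with $k\geq m-n$ (rather than merely $C^{m-n}$, or the classical $C^{m-n+1}$) is that $k=m-n$ is precisely the borderline smoothness at which Sard's theorem still produces a null set of critical values, the Lipschitz condition on the $k$-th derivative compensating for the extra derivative required classically. Getting this regularity threshold right, rather than the soft retraction argument of the topological step, is where the real content of the proposition lies; the requirement $k\geq 1$ in the $\max$ plays the secondary but necessary role of guaranteeing that $dF$ is continuous, so that $U$ is genuinely open.
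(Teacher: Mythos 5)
Your proof is correct and follows essentially the same route as the paper's: assume $\rank dF\leq n$ everywhere, invoke Bates' $C^{k,1}$ Morse--Sard theorem to conclude that $F(\B^{m+1})=F({\rm Crit}\, F)$ has measure zero, and contradict this via the retraction argument showing $\B^{n+1}\subset F(\B^{m+1})$ because $[f]\neq 0$. The paper's version is merely terser, leaving the retraction step (stated earlier in the paper) and the openness-via-lower-semicontinuity of $\rank dF$ implicit.
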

Here, by $C^{k,1}$ we denote the class of $C^k$ functions whose $k$th
order derivatives are Lipschitz continuous.

Indeed, if $\rank dF\leq n$ everywhere, then all points in $\B^{m+1}$
are critical and according to the Sard theorem, \cite{bates}, 
the measure of the set
$$
\B^{n+1}\subset F(\B^{m+1})=F({\rm Crit}\, F)
$$
equals zero which is a clear contradiction.

In view of the above discussion it would be natural to expect that if
$0\neq [f]\in \pi_m(\S^n)$ then any Lipschitz extension $F$ should
satisfy $\rank dF=n+1$ on a set of positive measure. However, the
result of Wenger and Young \cite[Theorem~2]{WengerY2} shows that this
is not always the case, see Corollary~\ref{WY2}. Their proof employs
an argument of Kaufman \cite{Kaufman}, who constructed a surprising
example of a surjective mapping $F\in C^1(\bbbr^{n+1},\bbbr^n)$ with
$\rank dF\leq 1$ everywhere.

One motivation for studying Lipschitz homotopy groups stems from the
problem of approximation of Sobolev mappings. In the classical setting
the answer to the question whether smooth or equivalently Lipschitz
mappings $\lip(\M,\N)$ between compact Riemannian manifolds are dense
in the Sobolev space of mappings $W^{1,p}(\M,\N)$ heavily depends on
the homotopy groups of $\N$, see \cite{bethuel1,bethuelz,
hajlaszMathAnn,hajlaszSobolev,hajlaszApproximation,hangl2}. Here
$\M$ may have boundary, but $\partial\N=\emptyset$. More precisely, if
$1\leq p<\dim\M$ and $\pi_{[p]}(\N)\neq \{0\}$, where $[p]$ is the
integral part of $p$, then Lipschitz maps are not dense in
$W^{1,p}(\M,\N)$. In the case of Sobolev mappings into the Heisenberg
group it appears that the density of Lipschitz mappings
$\lip(\M,\bbbh_n)$ in $W^{1,p}(\M,\bbbh_n)$, where $\M$ is a compact
Riemannian manifold with or without boundary, depends on Lipschitz
homotopy groups of $\bbbh_n$. For example it was proven in
\cite{Heisenberg} that if $\dim\M\geq n+1$ and $n\leq p<n+1$, then
Lipschitz maps $\lip(\M,\bbbh_n)$ are not dense in
$W^{1,p}(\M,\bbbh_n)$. Note that in this case
$\pi_{[p]}^{\lip}(\bbbh_n)=\pi_n^{\lip}(\bbbh_n)\neq \{ 0\}$. In this
paper we extend this result as follows.

\begin{theorem}
\label{density}
If $\M$ is a compact Riemannian manifold with or without boundary of
dimension $\dim\M\geq 4n$, then Lipschitz mappings
$\lip(\M,\bbbh_{2n})$ are not dense in $W^{1,p}(\M,\bbbh_{2n})$ when
$4n-1\leq p<4n$.
\end{theorem}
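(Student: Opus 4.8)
The plan is to adapt the classical obstruction argument for the non-density of Lipschitz maps in $W^{1,p}(\M,\N)$ to the Heisenberg target, using the bi-Lipschitz embedding $\phi$ and the rank constraint of Proposition~\ref{rank_n} to collapse everything to a purely Euclidean statement. Since $\pi_{4n-1}(\S^{2n})$ is rank-essential by Theorem~\ref{main2}, I would fix $f\in C^\infty(\S^{4n-1},\S^{2n})$ with property (R) and form its homogeneous extension $w:\B^{4n}\to\S^{2n}$, $w(x)=f(x/|x|)$. A computation in polar coordinates shows that $\int_{\B^{4n}}|\nabla w|^p$ is finite precisely when $p<4n$, so $w\in W^{1,p}(\B^{4n},\S^{2n})$ for $p$ in our range; composing with $\phi$ from Proposition~\ref{sul} gives $u:=\phi\circ w\in W^{1,p}(\B^{4n},\bbbh_{2n})$. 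As $\dim\M\ge 4n$, I would plant this model in a tubular neighborhood of a compact $(\dim\M-4n)$-dimensional submanifold and extend to a global $u\in W^{1,p}(\M,\bbbh_{2n})$, performing the transition to a Lipschitz map on the rest of $\M$ \emph{inside} $\phi(\S^{2n})$ so as not to raise the energy; this gluing is routine. The claim is that this $u$ admits no approximation by Lipschitz maps.

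Suppose to the contrary that $u_j\to u$ in $W^{1,p}(\M,\bbbh_{2n})$ with $u_j\in\lip(\M,\bbbh_{2n})$. Let $\Psi:\bbbr^{2n+1}\to\bbbr^{2n+1}$ be a globally Lipschitz map, smooth and equal to $\phi^{-1}$ on $\phi(\S^{2n})$, exactly as in the proof of Theorem~\ref{main1} (extended and truncated to be globally Lipschitz). Set $v_j:=\Psi\circ u_j$. This is the decisive reduction: because $u_j$ is Lipschitz into $\bbbh_{2n}$, Proposition~\ref{rank_n} gives $\rank du_j\le 2n$ a.e., and since $\Psi$ is smooth the chain rule yields $\rank dv_j\le 2n$ a.e. At the same time $v_j\to\Psi\circ u=\Psi\circ\phi\circ w=w$ in $W^{1,p}(\M,\bbbr^{2n+1})$, using that $\Psi$ is globally Lipschitz and that Sobolev convergence into $(\bbbh_{2n},d_{cc})$ forces Euclidean $W^{1,p}$ convergence. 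Thus the entire problem becomes Euclidean and free of any Lipschitz-homotopy analysis inside $\bbbh_{2n}$: the singular map $w:\M\to\S^{2n}$, whose restriction to small linking $\S^{4n-1}$'s represents the rank-essential class $[f]$, is approximated in $W^{1,p}$ by Lipschitz maps $v_j:\M\to\bbbr^{2n+1}$ obeying the rank bound $\rank dv_j\le 2n$.

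To finish I would invoke the generalized Hopf invariant underlying the proof that $\pi_{4n-1}(\S^{2n})$ is rank-essential (Theorem~\ref{main2}). That invariant is a functional $\mathcal{H}$ obtained by integrating differential forms built from the map, with three properties: it is nonzero on the singular extension $w$; it vanishes identically on maps $\B^{4n}\to\bbbr^{2n+1}$ with $\rank\le 2n$ a.e.\ (this vanishing on rank-deficient maps is, in effect, the mechanism behind property (R)); and it is continuous along the relevant $W^{1,p}$-convergence. Granting these, a Fubini argument transfers the contradiction from the model slice $\B^{4n}\times\{\mathrm{pt}\}$ to $\M$: for a.e.\ $4n$-dimensional transversal through the singular submanifold the restrictions satisfy $v_j\to w$ in $W^{1,p}$ of the slice, the slice maps $v_j$ are Lipschitz with $\rank\le 2n$, so $0=\mathcal{H}(v_j)\to\mathcal{H}(w)\neq 0$, which is absurd.

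The main obstacle is exactly this last analytic point: the continuity of $\mathcal{H}$ in the borderline range $4n-1\le p<4n$. The lower endpoint $p=4n-1=\dim\S^{4n-1}$ is critical, since the linking spheres sit at the Sobolev embedding threshold, so there one cannot argue by uniform convergence and must instead control $\mathcal{H}$ through $\mathrm{VMO}$ estimates in the spirit of Brezis--Nirenberg, using $W^{1,4n-1}(\S^{4n-1})\hookrightarrow\mathrm{VMO}$ and the null-Lagrangian character of the Hopf integrand. For $4n-1<p<4n$ the argument is softer: the slice traces converge uniformly, one may compose with a smooth retraction $\sigma$ of a tubular neighborhood of $\S^{2n}$ to place the approximants on $\S^{2n}$ (where $\rank\le 2n$ is automatic), and then apply property (R) after noting its invariance under $\S^{2n}$-valued homotopy of the boundary data. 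Producing the delicate estimate that makes $\mathcal{H}$ continuous at $p=4n-1$, and verifying that $\mathcal{H}$ truly vanishes on every rank-$\le 2n$ map, is where the real work lies; the remainder is the bookkeeping of the reduction to the Euclidean model through $\phi$, $\Psi$, and Proposition~\ref{rank_n}.
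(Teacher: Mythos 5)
Your skeleton matches the paper's strategy: a homogeneous extension of a Hopf map composed with $\phi$, slicing by linking spheres $\S^{4n-1}(r)$, vanishing of a generalized Hopf invariant on restrictions of Lipschitz maps of rank $\leq 2n$ (via the radial null-homotopy), non-vanishing on the singular map, and a convergence statement for the invariant to produce the contradiction. But there is a genuine gap at what you call the ``decisive reduction.'' You assert that $u_j\to u$ in $W^{1,p}(\M,\bbbh_{2n})$ ``forces Euclidean $W^{1,p}$ convergence,'' hence $v_j=\Psi\circ u_j\to w$ in $W^{1,p}(\M,\R^{2n+1})$. This is false in general: convergence in $W^{1,p}(\M,\bbbh_{2n})$ is measured through the Kuratowski embedding into $\ell^\infty$, and it implies Euclidean $W^{1,p}$ convergence only when the sequence is \emph{uniformly bounded} (Corollary~\ref{T5}); your Lipschitz approximants $u_j$ need not be uniformly bounded. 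The paper flags exactly this obstruction and the heart of its proof is the device you are missing: it chooses the obstruction form $\alpha$ to be a \emph{compactly supported} extension of $\phi_*\vol_{\S^{2n}}$, sets $S_k=\{g_k-f\in Z\}$ with $Z$ the center \eqref{center} and $E_k=S_k\cup g_k^{-1}(\supp\alpha)$, and proves $\chi_{E_k}\nabla g_k\to\nabla f$ in $L^p$ using Lemma~\ref{6.5} ($\nabla g_k=\nabla f$ a.e.\ on $S_k$) and Proposition~\ref{T4} (the Heisenberg energy off $S_k$ tends to $0$, and on $g_k^{-1}(\supp\alpha)$ the Euclidean and Heisenberg lengths are comparable because there the maps are bounded). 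Since $g_k^*\alpha=0$ off $g_k^{-1}(\supp\alpha)$, this truncated convergence suffices to get $(g_k|_{\S^{4n-1}(r)})^*\alpha\to(f|_{\S^{4n-1}(r)})^*\alpha$ in $L^{p/2n}$ on a.e.\ sphere, which is all the Hopf-invariant machinery needs. Without this (or a substitute --- note that your truncation of $\Psi$ to be constant outside a large ball could in fact play the role of $\supp\alpha$, but you would still have to run the $S_k$/center argument rather than invoke the false implication), your claim $v_j\to w$ and everything after it collapses. Also, dimensionally $\Psi$ should map $\R^{4n+1}$, the ambient space of $\bbbh_{2n}$, to $\R^{2n+1}$.

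A second, lesser defect is that you leave the continuity of the invariant as an open item (``where the real work lies'') and suggest VMO/Brezis--Nirenberg estimates at the endpoint $p=4n-1$. In the paper this is Proposition~\ref{pr:HIconvergence}, and no VMO theory is needed: by Proposition~\ref{la:poincare} (the $L^p$-Hodge decomposition) one can choose primitives $\omega_k$ of $g_k^*\alpha$ with $\Vert\omega_k\Vert_{L^{p/(p-1)}}\leq C\Vert g_k^*\alpha\Vert_{L^p}$, and the difference of Hopf invariants is then estimated directly by H\"older's inequality. The only threshold is that pullback of a $2n$-form by a map with gradient in $L^p$ of the $(4n-1)$-sphere lies in $L^{p/2n}$, and the requirement $p/2n\geq 2-\tfrac{1}{2n}$ of Proposition~\ref{pr:HIconvergence} is equivalent to $p\geq 4n-1$; this is precisely where the hypothesis on $p$ enters, not through any borderline compensated-compactness phenomenon. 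So even granting your reduction, the analytic core of the proof remains to be supplied, whereas the paper supplies it with elementary Hodge-theoretic estimates.
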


Again, according to Theorem~\ref{main2},
$\pi^{\lip}_{[p]}(\bbbh_{2n})=\pi^{\lip}_{4n-1}(\bbbh_{2n})\neq
\{0\}$. On the other hand we would like to point out that it is
possible to construct a smooth manifold $\N$ with one point
singularity such that all its Lipschitz homotopy groups are trivial,
yet Lipschitz mappings into $\N$ are not dense in the space
of Sobolev mappings into $\N$, see \cite{hajlaszs}.

The paper is organized as follows. In Section~\ref{heis} we provide a
brief introduction to the Heisenberg group and we prove
Proposition~\ref{rank_n}. This proof is well known, but we recall it
here for the sake of completeness and to see how the language of
differential forms and their weak exterior derivatives can be used.
Such an approach is an essential ingredient in this paper. In
Section~\ref{Sobolevmaps} we briefly recall the definition of Sobolev
mappings into $\bbbh_n$. In Section~\ref{forms} we collect basic
results about differential forms, DeRham cohomology and Sobolev
spaces. We use these facts to generalize in
Section~\ref{sec:hopfinvariant} the Hopf invariant to Lipschitz
mappings into Euclidean spaces whose derivative has low rank. SUch
generalization is essential for the proof of Theorem~\ref{main2} which
is done in Section~\ref{main}. Finally, in
Section~\ref{SobolevDensity} we prove Theorem~\ref{density}.

Those who are interested in the generalized Hopf invariant and its 
applications to homotopy groups of spheres may skip Sections~\ref{heis} 
and~\ref{Sobolevmaps} and read Sections~\ref{forms}-\ref{main}.
This material is of independent interest and does not involve
Heisenberg groups.

{\bf Acknowledgments:} We thank the reviewer for the careful reading of 
our manuscript and the valuable comments.

\section{The Heisenberg groups}
\label{heis}

The {\em Heisenberg group}  is a Lie group
$\bbbh_n=\bbbc^n\times\bbbr=\bbbr^{2n+1}$ equipped with the group law
$$
(z,t)*(z',t')=\left(z+z',t+t'+2\, {\rm Im}\,  \left(\sum_{j=1}^n z_j
  \overline{z_j'}\right)\right).
$$
A basis of left invariant vector fields is given by
\begin{equation*}
%\label{XY}
X_j=\frac{\partial}{\partial x_j} + 2y_j\frac{\partial}{\partial t},\
Y_j=\frac{\partial}{\partial y_j}-2x_j\frac{\partial}{\partial t},\ j=1,\ldots,n, \
\mbox{and}\
T=\frac{\partial}{\partial t}\, .
\end{equation*}
Here and in what follows we use the notation
$$
(z,t) = (z_1,\ldots,z_n,t) = (x_1,y_1,\ldots,x_n,y_n,t).
$$
The Heisenberg group is equipped with the
{\em horizontal distribution} $H\bbbh_n$, which is defined at every
point $p\in\bbbh_n$ by
$$
H_p\bbbh_n={\rm span}\, \{ X_1(p), Y_1(p), \ldots, Y_1(p), Y_n(p)\}.
$$
The distribution $H\bbbh_n$ is equipped with the left invariant metric
$\boldg$ such that the vectors $X_1(p),Y_1(p),\ldots,X_n(p),Y_n(p)$
are orthonormal at every point $p\in\bbbh_n$. An absolutely continuous
curve $\gamma:[a,b]\to\bbbh_n$ is called {\em horizontal} if
$\gamma'(s)\in H_{\gamma(s)}\bbbh_n$ for almost every $s$. The
Heisenberg group $\bbbh_n$ is equipped with the
{\em Carnot-Carath\'eodory metric} $d_{cc}$ which is defined as the
infimum of the lengths of horizontal curves connecting two given
points. The length of the curve is computed with respect to the metric
$\boldg$ on $H\bbbh_n$. It is well known that any two points in
$\bbbh_n$ can be connected by a horizontal curve and hence $d_{cc}$ is
a true metric. Actually, $d_{cc}$ is topologically equivalent to the
Euclidean metric. Moreover, for any compact set $K$ there is a
constant $C\geq 1$ such that
\begin{equation}
\label{SReq1}
C^{-1}|p-q|\leq d_{cc}(p,q)\leq C|p-q|^{1/2}
\end{equation}
for all $p,q\in K$.
In what follows $\bbbh_n$ will always be regarded as the metric space
$(\bbbh_n,d_{cc})$. It follows from \eqref{SReq1} that the identity mapping
from $\bbbh_n$ to $\bbbr^{2n+1}$ is locally Lipschitz, but its inverse is only
locally H\"older continuous with exponent $1/2$. The Hausdorff dimension of
any open set in $\bbbh_n$ equals $2n+2$ and hence $\bbbh_n$ is not bi-Lipschitz homeomorphic
to $\bbbr^{2n+1}$, not even locally.

\begin{proof}[Proof of Proposition~\ref{rank_n}.]
If $f=(f_1,g_1,\ldots,f_n,g_n,h):\Omega\to\bbbh_n$ is a Lipschitz
mapping from a domain $\Omega \subset \bbbr^k$, then it is locally
Lipschitz as a mapping into $\bbbr^{2n+1}$ and hence is differentiable
a.e. It follows that the derivative of $f$ is horizontal, i.e. $df(p)$
maps the tangent space $T_p\bbbr^k$ into the horizontal space
$H_{f(p)}\bbbh_n\subset T_{f(p)}\bbbr^{2n+1}$. Indeed, $f$ maps
straight lines into Lipschitz curves, and Lipschitz curves in
$\bbbh_n$ are horizontal, \cite[Proposition~11.4]{hajlaszk}. Thus $df$
maps vectors tangent to straight lines into vectors tangent to
horizontal curves. Hence $df(p)(T_p\bbbr^k)\subset H_{f(p)}\bbbh_n$
for a.e.\ $p\in\bbbr^k$. Let
\begin{equation}
\label{the-alpha-form}
\alpha = dt + 2 \sum_j (x_j \, dy_j - y_j \, dx_j)
\end{equation}
be the standard contact form on $\bbbr^{2n+1}$. It is easy to see that
the kernel of $\alpha(p)$, $p\in\bbbr^{2n+1}$, i.e.\ the collection of
vectors $v$ such that $\alpha(p)v=0$, coincides with the horizontal
space $H_p\bbbh_n$.
Hence horizontality of the derivative of $f$ means that
$f^*\alpha(p) = 0$ for a.e.\ $p$, i.e.
\begin{equation}
\label{contact-eq}
dh+2\sum_{j=1}^n \left( f_jdg_j-g_jdf_j\right) = 0 \quad \mbox{a.e.}
\end{equation}
Since the functions are Lipschitz continuous we can take the
distributional exterior derivative (see
Lemma~\ref{pr:weakdfastcommute}), obtaining
$$
\sum_{j=1}^n df_j\wedge dg_j=0.
$$
In other words if $\omega=\sum_j dx_j\wedge dy_j$ is a symplectic form
on $\bbbr^{2n}$ and $F=(f_1,g_1,\ldots,f_n,g_n)$ is a composition of
$f$ with the projection onto $\bbbr^{2n}$, then $F^*\omega=0$ a.e.\ as
a pointwise equality. Let $\cJ:T_q\bbbr^{2n}\to T_q\bbbr^{2n}$ be
given by
$$
\cJ\left(\sum_{j=1}^{n}\left(
a_j\frac{\partial}{\partial x_j}+b_j\frac{\partial}{\partial
  y_j}\right)\right) = \sum_{j=1}^{n}
\left( -b_j\frac{\partial}{\partial x_j}+a_j\frac{\partial}{\partial
    y_j}\right)\, .
$$
Then for any vectors $u,v\in T_q\bbbr^{2n}$ we have
$\omega(q)(u,v)=-\langle u,\cJ v\rangle$. If $f$ is differentiable at a point $p\in\bbbr^k$ and $(F^*\omega)(p)=0$, then
for any vectors $u,v\in V:=dF(p)(T_p\bbbr^k)\subset T_{F(p)}\bbbr^{2n}$ we have
$$
\omega(F(p))(u,v)=-\langle u,\cJ v\rangle =0.
$$
Thus the space $V$ is orthogonal to $\cJ V$ and hence $\dim V\leq n$.
The rows of the matrix $df$ are $\nabla f_1,\nabla g_1,\ldots,\nabla f_n,\nabla g_n,\nabla h$.
We proved that the rank of the minor formed by the first $2n$ rows is at most $n$. According to \eqref{contact-eq}
the last row is linearly dependent on the first $2n$ rows and hence $\rank df\leq n$ a.e.
The proof is complete.
\end{proof}

\section{Sobolev mappings into $\bbbh_n$}
\label{Sobolevmaps}

In this section we briefly recall the definition of the space of Sobolev mappings into $\bbbh_n$. For more
details, see \cite{Heisenberg}. If $\Omega\subset\R^m$ is open and $V$ is a Banach space, then the space of
vector valued Sobolev functions $W^{1,p}(\Omega,V)$ can be defined with the notion of Bochner integral and weak derivatives.
$W^{1,p}(\Omega,V)$ is a Banach space. Using local coordinate systems (see Section~\ref{forms} for more details) one can easily extend this definition to the case of mappings from
a compact manifold $W^{1,p}(\M,V)$. Any separable metric space and in particular the Heisenberg group $\bbbh_n$
admits an isometric embedding into $\ell^\infty$ (the Kuratowski embedding). Thus we can assume that
$\bbbh_n\subset\ell^\infty$. Then we define
$$
W^{1,p}(\M,\bbbh_n)=\{u\in W^{1,p}(\M,\ell^\infty):\,u(x)\in\bbbh_n\ \mbox{a.e.}\}\, .
$$
The space $W^{1,p}(\M,\bbbh_n)$ is equipped with the norm metric
$\rho(u,v)=\Vert u-v\Vert_{W^{1,p}}$. The question is whether Lipschitz mappings
$\lip(\M,\bbbh_n)$ form a dense subset of $W^{1,p}(\M,\bbbh_n)$,
see Theorem~\ref{density} and the discussion preceding its statement.

The following characterization of bounded Sobolev mappings into $\bbbh_n$ was proved in
\cite{capognal}, \cite[Proposition~6.8]{Heisenberg}.
\begin{proposition}
\label{Capogna-Lin}
A bounded function
$$
f=(z,t)=(x_1,y_1,\ldots,x_n,y_n,t):\Omega\to\bbbh_n
$$
lies in $W^{1,p}(\Omega,\bbbh_n)$ if and only if $f$ is an element of
the usual Sobolev space $W^{1,p}(\Omega,\R^{2n+1})$ and satisfies the
contact equation
\begin{equation*}
%\label{CE}
\nabla t = 2 \sum_{j=1}^n (y_j\nabla x_j - x_j \nabla y_j) \qquad \mbox{a.e.\ in $\Omega$.}
\end{equation*}
\end{proposition}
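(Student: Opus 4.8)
The plan is to prove both implications through the description of metric-space–valued Sobolev maps by their behaviour on almost every line, using the isometric Kuratowski embedding $\kappa\colon\bbbh_n\to\ell^\infty$ together with two geometric facts. First, by the lower bound in \eqref{SReq1} the Euclidean coordinate functions $x_j,y_j,t$ are Lipschitz on $(\bbbh_n,d_{cc})$. Second, the $d_{cc}$-length of a \emph{horizontal} curve $\gamma=(x,y,t)$ equals the Euclidean length $\int\bigl(\sum_j(\dot x_j^2+\dot y_j^2)\bigr)^{1/2}$ of its projection to $\bbbc^n$: writing the velocity in the frame $X_j,Y_j$ one finds $\gamma'=\sum_j(\dot x_jX_j+\dot y_jY_j)$ precisely when the contact equation $\dot t=2\sum_j(y_j\dot x_j-x_j\dot y_j)$ holds, and then $\|\gamma'\|_{\boldg}^2=\sum_j(\dot x_j^2+\dot y_j^2)$. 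In particular $d_{cc}(\gamma(s_1),\gamma(s_2))$ is bounded by the Euclidean length of the horizontal projection of $\gamma$ alone, which is the decoupling that makes the $t$-coordinate harmless.

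For necessity, suppose $f\in W^{1,p}(\Omega,\bbbh_n)$, i.e. $\kappa\circ f\in W^{1,p}(\Omega,\ell^\infty)$. Each Euclidean coordinate, regarded as a Lipschitz function on $\kappa(\bbbh_n)$, extends (McShane) to a Lipschitz function on $\ell^\infty$, so the chain rule for compositions of Banach-valued Sobolev maps with Lipschitz functions gives $x_j,y_j,t\in W^{1,p}(\Omega)$, whence $f\in W^{1,p}(\Omega,\R^{2n+1})$. For the contact equation I would pass to a representative of $\kappa\circ f$ that is absolutely continuous on almost every line parallel to the axes (the ACL property of Bochner--Sobolev maps, valid here since the image lies in the separable subspace generated by $\kappa(\bbbh_n)$). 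Along such a line the restriction is an absolutely continuous curve in $(\bbbh_n,d_{cc})$, hence horizontal (as in the proof of Proposition~\ref{rank_n}, \cite{hajlaszk}), so the contact equation holds a.e. on a.e. line; Fubini then yields it a.e. in $\Omega$.

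For sufficiency, assume $f\in W^{1,p}(\Omega,\R^{2n+1})$ is bounded and satisfies the contact equation a.e.; boundedness guarantees that the products $y_j\nabla x_j,\ x_j\nabla y_j$ lie in $L^p$, so the equation is meaningful, and that $\kappa\circ f$ is essentially bounded, hence locally $L^p$. Choosing an ACL representative of $f$ and using Fubini, on almost every coordinate line $s\mapsto f(\gamma(s))$ is Euclidean-absolutely continuous and satisfies the contact equation for a.e. $s$, so it is a horizontal curve, and by the length formula above it is $d_{cc}$-absolutely continuous with
$$
d_{cc}\bigl(f(\gamma(s_1)),f(\gamma(s_2))\bigr)\le\int_{s_1}^{s_2}\Bigl(\sum_j\bigl(|\partial_s x_j|^2+|\partial_s y_j|^2\bigr)\Bigr)^{1/2}\,ds\le\int_{s_1}^{s_2}(g\circ\gamma)\,ds,
$$
where $g=|\nabla f|\in L^p(\Omega)$. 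This exhibits $g$ as an $L^p$ upper gradient of $\kappa\circ f$; equivalently, for every $1$-Lipschitz $\varphi\colon(\bbbh_n,d_{cc})\to\R$ the function $\varphi\circ f$ is ACL with $|\nabla(\varphi\circ f)|\le g$, so $\kappa\circ f\in W^{1,p}(\Omega,\ell^\infty)$ by the Reshetnyak characterization, i.e. $f\in W^{1,p}(\Omega,\bbbh_n)$.

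The main obstacle is this sufficiency direction, namely upgrading Euclidean Sobolev regularity to $d_{cc}$-Sobolev regularity. The upper bound in \eqref{SReq1} is only H\"older of order $1/2$ and is far too weak to control $d_{cc}$-distances directly; the entire gain comes from the contact equation, which forces the velocity to be horizontal and thereby makes the $d_{cc}$-speed equal to the Euclidean speed of the \emph{horizontal} coordinates alone. The delicate points are the compatibility between the a.e.-in-$\Omega$ validity of the contact equation and its use along a.e. line (handled by Fubini together with the standard identification of weak partial derivatives with the classical derivatives of the one-dimensional restrictions), and the verification that the resulting upper gradient bound is genuinely uniform over all $1$-Lipschitz test functions.
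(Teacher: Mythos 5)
The paper itself contains no proof of Proposition~\ref{Capogna-Lin}: it is quoted verbatim from \cite{capognal} and \cite[Proposition~6.8]{Heisenberg}, so there is no internal argument to compare yours against, and I assess your proposal on its own terms. Your geometric core is correct and is indeed the standard route taken in those references: the coordinate functions are $d_{cc}$-Lipschitz on bounded sets by the left inequality in \eqref{SReq1}; restrictions of a Sobolev map to almost every line are metrically absolutely continuous, absolutely continuous curves in $(\bbbh_n,d_{cc})$ are horizontal (the same argument as in the proof of Proposition~\ref{rank_n}), and horizontality is exactly the contact equation, which gives necessity; conversely, a Euclidean-AC curve satisfying the contact equation a.e.\ is horizontal, and its $d_{cc}$-speed equals the Euclidean speed of its projection to $\bbbc^n$, which gives your upper-gradient estimate.

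The genuine gap is the final step of sufficiency: ``this exhibits $g$ as an $L^p$ upper gradient of $\kappa\circ f$ \dots so $\kappa\circ f\in W^{1,p}(\Omega,\ell^\infty)$ by the Reshetnyak characterization.'' There is no such citable equivalence, and under the definition this paper states (Bochner integrals and $\ell^\infty$-valued weak derivatives) the implication you invoke is false in general, because $\ell^\infty$ fails the Radon--Nikodym property. Concretely, the $1$-Lipschitz curve $\gamma(t)=(2^{-k}\sin(2^kt))_{k\geq 1}$, $t\in(0,1)$, has compact range and satisfies every post-composition/upper-gradient condition, yet its only possible weak derivative --- the componentwise derivative $(\cos(2^kt))_{k\geq1}$, forced by applying coordinate functionals to the integration-by-parts identity --- is not essentially separably valued (for a.e.\ pair $t\neq s$ one has $\sup_k|\cos 2^kt-\cos 2^ks|\geq 1$), hence not Bochner measurable, so $\gamma\notin W^{1,p}((0,1),\ell^\infty)$ in the weak-derivative sense. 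Thus ``upper gradient for all $1$-Lipschitz test functions $\Rightarrow$ membership in $W^{1,p}(\Omega,\ell^\infty)$'' cannot be quoted as a black box; showing that for maps with values in $\kappa(\bbbh_n)$ the componentwise derivative of $x\mapsto\bigl(d_{cc}(f(x),p_i)\bigr)_i$ genuinely constitutes a weak derivative (in whichever precise sense of \cite{Heisenberg} is in force) is the heart of the proposition, and it requires the specific geometry of $\bbbh_n$ --- e.g.\ the non-smoothness of $d_{cc}(\cdot,p_i)$ along the center translates $p_iZ$, which is where facts like Lemma~\ref{6.5} enter. As written, your proof assumes at its decisive step exactly what \cite{capognal} and \cite[Proposition~6.8]{Heisenberg} actually prove.
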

Thus the derivative of a Sobolev mapping 
$f=f(u_1,\ldots,u_m)$
maps the tangent space to a horizontal subspace of $\bbbh_n$. The length
of the gradient $\nabla f$ can be computed with respect to the Euclidean metric $|\nabla f|$ in $\R^{2n+1}$
or with respect to the sub-Riemannian metric in $\bbbh_n$
$$
|\nabla f|_\bbbh =
\left( \sum_{k=1}^m\left|\frac{\partial f}{\partial u_k}\right|_\bbbh^2\right)^{1/2}\, ,
$$
where $|v|_\bbbh$ stands for the length of the horizontal vector with respect to the
given metric in the horizontal distribution.
If the image of the mapping $f$ is contained in a bounded subset of $\bbbh_n$, then both lengths $|\nabla f|$ and
$|\nabla f|_\bbbh$ are comparable. The following result was proved in \cite[Theorem~1.6]{Heisenberg}.
\begin{proposition}
\label{T4}
Let $\Omega$ be a bounded domain in $\R^m$.
Suppose that
$f_k,f\in W^{1,p}(\Omega,\bbbh_n)$, $k=1,2,\ldots$, $1\leq p<\infty$,
$f_k\to f$ in $W^{1,p}(\Omega,\bbbh_n)$. Then
$$
\int_{\{f_k-f \not\in Z\}} |\nabla f_k|_\bbbh^p+|\nabla f|_\bbbh^p\to 0 \quad
\mbox{as $k\to\infty$,}
$$
where $Z$ denotes the center of $\bbbh_n$.
\end{proposition}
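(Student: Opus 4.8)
The plan is to reduce everything to the horizontal coordinates. Write $f=(z,t)$ and $f_k=(z_k,t_k)$ with $z,z_k\in\R^{2n}$ the horizontal parts and $t,t_k$ the vertical (center) coordinate. The contact equation of Proposition~\ref{Capogna-Lin} forces the velocity of a Sobolev map into $\bbbh_n$ to be horizontal, with the horizontal coefficients being exactly the derivatives of the $z$-coordinates; hence $|\nabla f|_\bbbh=|\nabla z|$ and $|\nabla f_k|_\bbbh=|\nabla z_k|$. Moreover $f_k-f\in Z$ precisely when the horizontal parts coincide, so $\{f_k-f\not\in Z\}=\{z_k\neq z\}$ up to a null set. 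First I would record the form of the quantity controlled by $\rho$. Realizing $\bbbh_n\subset\ell^\infty$ through the Kuratowski embedding, whose coordinates are $u\mapsto d_{cc}(f(u),p_i)$ for a countable dense set $\{p_i\}\subset\bbbh_n$, one has for a.e.\ $u$
$$
\|\nabla(f_k-f)\|_{\ell^\infty}(u)=\sup_i\big|\nabla_u d_{cc}(f_k(u),p_i)-\nabla_u d_{cc}(f(u),p_i)\big|,
$$
and since $\rho(f_k,f)=\|f_k-f\|_{W^{1,p}}\to 0$, the $L^p$ norm of the left-hand side over $\Omega$ tends to $0$.

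The heart of the argument is a pointwise lower bound: I claim there is $c=c(n)>0$ with
$$
\|\nabla(f_k-f)\|_{\ell^\infty}(u)\ \geq\ c\big(|\nabla z_k(u)|+|\nabla z(u)|\big)\qquad\text{for a.e. }u\in\{z_k\neq z\}.
$$
To see this, note that for fixed $p$ the chain rule gives $\nabla_u d_{cc}(f(u),p)=(\partial^h f(u))^*\,\xi_p(f(u))$, where $\xi_p(q)$ is the Pansu gradient of the Carnot--Carath\'eodory distance $d_{cc}(\cdot,p)$ at $q$, a horizontal covector of norm $1$, and $\partial^h f$ is the horizontal differential, so that $\sup_{|\xi|=1}|(\partial^h f)^*\xi|=\|\nabla z\|$ up to a dimensional constant. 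The key geometric fact is that, because short horizontal geodesics leave any point in every horizontal direction, as $p$ ranges over base points close to $f_k(u)$ the covector $\xi_p(f_k(u))$ sweeps out all unit horizontal directions, while $\xi_p(f(u))$ stays near the fixed direction pointing from $f(u)$ toward $f_k(u)$; here one uses that $f_k(u)$ and $f(u)$ have \emph{distinct} horizontal projections on $\{z_k\neq z\}$. Choosing $p$ to align $\xi_p(f_k(u))$ with a top singular direction of $\partial^h f_k(u)$, and exploiting the freedom to reverse this alignment to defeat a possible cancellation with the stable second term, yields the sup $\geq c\,\|\nabla z_k(u)\|$; approaching $f(u)$ instead gives $\geq c\,\|\nabla z(u)\|$, and the two together give the displayed inequality.

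Granting the lower bound, the conclusion is immediate: integrating over $\{z_k\neq z\}$ and using comparability of the operator and Hilbert--Schmidt norms,
$$
\int_{\{f_k-f\not\in Z\}}|\nabla f_k|_\bbbh^p+|\nabla f|_\bbbh^p
\ \leq\ C\int_\Omega\|\nabla(f_k-f)\|_{\ell^\infty}^p
\ \leq\ C\,\rho(f_k,f)^p\ \longrightarrow\ 0 .
$$

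The main obstacle is the pointwise lower bound, which is genuinely sub-Riemannian. One must control the Pansu gradient $\xi_p(q)$ of $d_{cc}$ and verify that, as $q$ ranges over base points near a fixed point, these gradients realize every unit horizontal direction while remaining stable at a second point lying at positive distance. Making this estimate uniform in the configuration, and handling the a.e.\ technicalities --- differentiability of $u\mapsto d_{cc}(f_k(u),p_i)$ and of $u\mapsto d_{cc}(f(u),p_i)$, a measurable choice of nearly optimal base points from the fixed dense family, and the exceptional set where $f_k(u)$ or $f(u)$ meets the cut locus of the chosen $p$ --- is where the real work lies. Everything else reduces to the identity $|\nabla f|_\bbbh=|\nabla z|$ and absolute continuity of the $L^p$ integral.
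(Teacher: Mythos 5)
First, a point of comparison: the paper does not actually prove this proposition --- it is quoted from \cite[Theorem~1.6]{Heisenberg} --- so your proposal can only be measured against that reference, and your outline reconstructs, in essence, the strategy used there (read the $W^{1,p}(\Omega,\ell^\infty)$ convergence through the Kuratowski coordinates $u\mapsto d_{cc}(f(u),p_i)$, then exploit the geometry of horizontal gradients of CC-distance functions at two points with distinct horizontal projections). Your reductions are correct: $|\nabla f|_\bbbh=|\nabla z|$ by horizontality of the weak derivative, $\{f_k-f\notin Z\}=\{z_k\neq z\}$ since $Z$ is the $t$-axis, and $\rho(f_k,f)\to 0$ does control the $L^p$ norm of $\sup_i\bigl|\nabla\bigl(d_{cc}(f_k(\cdot),p_i)-d_{cc}(f(\cdot),p_i)\bigr)\bigr|$. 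Granting your displayed pointwise lower bound, the conclusion follows exactly as you say.

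The gap is that the displayed lower bound --- which is the entire sub-Riemannian content of the proposition --- is asserted rather than proved, as you yourself concede (``this is where the real work lies''). Closing it requires three things, none of which is routine. (i) A chain rule $\nabla_u\, d_{cc}(f(u),q)=\xi_q(f(u))\,\partial^h f(u)$ for maps that are merely Sobolev: for Lipschitz maps this follows from Pansu--Rademacher differentiation, but for Sobolev maps it needs a separate argument, and it can only hold a.e.\ on $\{u:\,f(u)\notin qZ\}$, because $d_{cc}(\cdot,q)$ fails to be differentiable precisely on the cut locus $qZ$. (ii) The geometric facts you invoke: that off $qZ$ the function $d_{cc}(\cdot,q)$ is smooth with unit horizontal gradient (eikonal equation), that these gradients sweep out every unit horizontal direction as $q$ varies near a given point, and that they vary continuously at the second point; this is exactly where the hypothesis $z_k(u)\neq z(u)$ enters, since it keeps each of the two points off the cut locus of base points near the other. (iii) The bookkeeping that implements the $\pm\nu$ cancellation using only points of the fixed countable dense family, simultaneously for a.e.\ $u$ (countable intersections of full-measure sets, one per $p_i$, plus a continuity argument to replace the optimal base point by a nearby $p_i$). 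None of these steps would fail --- the lemma you claim is true, and the cancellation does yield a dimensional constant (which, minor point, depends on the domain dimension $m$ as well as on $n$, through comparing operator and Hilbert--Schmidt norms) --- but items (i)--(iii) constitute the actual proof, which your proposal defers. As it stands, this is a correct plan coinciding with the cited source's argument, not a complete proof.
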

Recall that the center of $\bbbh_n$ is the $t$-axis
\begin{equation}
\label{center}
Z=\{(z,t)\in\bbbh_n:\, z=0\}.
\end{equation}
This result implies that on large sets the difference $f_k-f$ must belong to $Z$.
This surprisingly strong condition stems from the fact that the
Kuratowski embedding of $\bbbh_n$ into $\ell^\infty$ is highly
non-smooth. The identity map ${\rm id}\, :\bbbh_n\to\R^{2n+1}$ is
locally Lipschitz and hence if we assume in addition that mappings
$f_k,f$ are bounded, then $f,f_k\in W^{1,p}(\Omega,\R^{2n+1})$.
However, it is not obvious that the convergence $f_k\to f$ in
$W^{1,p}(\Omega,\bbbh_n)$ implies convergence in
$W^{1,p}(\Omega,\R^{2n+1})$, because in general the composition with a
Lipschitz function need not be continuous in the Sobolev norm
\cite[Theorem~1.2]{hajlaszGAFA}. However the following result is a
consequence of Proposition~\ref{T4}, see
\cite[Corollary~1.7]{Heisenberg}. 
\begin{corollary}
\label{T5}
Let $\M$ be a compact Riemannian manifold.
Suppose that $f_k,f\in W^{1,p}(\M,\bbbh_n)$, $k=1,2,\ldots,$
are uniformly bounded (i.e. the range of all the mappings
is contained in a bounded subset of $\bbbh_n$).
If $f_k\to f$ in $W^{1,p}(\M,\bbbh_n)$, then $f_k\to f$ in
$W^{1,p}(\M,\R^{2n+1})$.
\end{corollary}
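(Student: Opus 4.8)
The plan is to verify separately that $f_k\to f$ and $\nabla f_k\to\nabla f$ in $L^p(\M,\R^{2n+1})$. Since all the mappings take values in a fixed bounded set, their images lie in a common compact $K\subset\bbbh_n$, and by \eqref{SReq1} there is $C$ with $|p-q|\le C\,d_{cc}(p,q)$ for $p,q\in K$. As the Kuratowski embedding is isometric, convergence in $W^{1,p}(\M,\bbbh_n)$ gives $\int_\M d_{cc}(f_k,f)^p\to0$, whence $\int_\M|f_k-f|^p\le C^p\int_\M d_{cc}(f_k,f)^p\to0$; this settles the $L^p$ convergence of the mappings (and shows in particular that $f_k,f\in W^{1,p}(\M,\R^{2n+1})$, which is clear anyway since $\id:\bbbh_n\to\R^{2n+1}$ is locally Lipschitz and the ranges are bounded). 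Writing $f=(F,t)$ with $F=(x_1,y_1,\ldots,x_n,y_n)$ the horizontal part (and $F_k$ for $f_k$), the contact equation of Proposition~\ref{Capogna-Lin} expresses $\nabla t$ through $F$ and $\nabla F$, so it remains to prove first that $\nabla F_k\to\nabla F$ in $L^p$, and then to transfer this to the $t$-components.

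The heart of the argument is the horizontal part, and here I would use Proposition~\ref{T4}. The point is that ``$f_k-f\notin Z$'' means precisely that the horizontal components disagree, i.e.\ $\{f_k-f\notin Z\}=\{F_k\ne F\}$, since $Z$ is the $t$-axis \eqref{center}. I would split $\int_\M|\nabla F_k-\nabla F|^p$ over this set and its complement. On $\{F_k=F\}$ the two $\R^{2n}$-valued Sobolev maps agree, so $\nabla(F_k-F)=0$ a.e.\ there and the contribution vanishes. On $\{F_k\ne F\}$ I would bound $|\nabla F_k-\nabla F|^p\lesssim|\nabla F_k|^p+|\nabla F|^p$, and since on the bounded set $K$ the Euclidean horizontal energy is controlled by the sub-Riemannian one, $|\nabla F|\le|\nabla f|\le C|\nabla f|_\bbbh$ (and likewise for $f_k$), this is dominated by $|\nabla f_k|_\bbbh^p+|\nabla f|_\bbbh^p$. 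Integrating over $\{f_k-f\notin Z\}$ and invoking Proposition~\ref{T4} gives $\int_\M|\nabla F_k-\nabla F|^p\to0$. On the manifold $\M$ one reduces to this via a finite atlas and a partition of unity, Proposition~\ref{T4} being local.

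Finally I would handle $\nabla t_k\to\nabla t$. The contact equation writes $\nabla t=2\sum_j(y_j\nabla x_j-x_j\nabla y_j)$, and similarly for $t_k$, so it suffices to show each product such as $y_j^{(k)}\nabla x_j^{(k)}$ converges to $y_j\nabla x_j$ in $L^p$. Splitting $y_j^{(k)}\nabla x_j^{(k)}-y_j\nabla x_j=y_j^{(k)}(\nabla x_j^{(k)}-\nabla x_j)+(y_j^{(k)}-y_j)\nabla x_j$, the first term is controlled by the uniform bound on $y_j^{(k)}$ together with the convergence $\nabla F_k\to\nabla F$ just proved; the second term tends to $0$ in $L^p$ by dominated convergence, its integrand being bounded by $(2\sup_k\|y_j^{(k)}\|_\infty)^p|\nabla x_j|^p\in L^1$ and, along any subsequence, vanishing a.e.\ after passing to a further subsequence on which $y_j^{(k)}\to y_j$ a.e. Combining the $L^p$ convergence of the mappings with that of $\nabla F_k$ and $\nabla t_k$ yields $f_k\to f$ in $W^{1,p}(\M,\R^{2n+1})$. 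I expect the product $/$ $t$-component step to be the most delicate point, since it genuinely requires the uniform boundedness hypothesis and the subsequence form of dominated convergence; the horizontal step, by contrast, is short once one observes that agreement of $F_k$ and $F$ forces their gradients to coincide.
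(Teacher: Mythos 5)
Your proof is correct, and its skeleton (control the zeroth-order part via \eqref{SReq1}, then split $\M$ into $S_k=\{f_k-f\in Z\}$ and its complement and kill the complement using Proposition~\ref{T4} plus the comparability of Euclidean and Heisenberg gradient lengths on bounded sets) is exactly the intended one; note that the paper does not actually prove Corollary~\ref{T5} but quotes it from \cite[Corollary~1.7]{Heisenberg} as a consequence of Proposition~\ref{T4}. Where you diverge is the $t$-component, and there your route is longer than necessary. The paper's Lemma~\ref{6.5} says that on $S_k$ the \emph{full} Euclidean gradients agree, $\nabla f_k=\nabla f$ a.e.\ (for precisely the reason you point out: the contact equation determines $\nabla t$ from the horizontal data, so agreement of the horizontal parts propagates to the $t$-part). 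Using that lemma, the single estimate
$$
\int_{\M}|\nabla f_k-\nabla f|^p\leq C\left(\int_{S_k}|\nabla f_k-\nabla f|^p+\int_{\M\setminus S_k}|\nabla f_k|^p+|\nabla f|^p\right)
$$
handles all $2n+1$ components at once: the first integral vanishes by Lemma~\ref{6.5}, and the second is $\lesssim\int_{\M\setminus S_k}|\nabla f_k|_\bbbh^p+|\nabla f|_\bbbh^p\to 0$ by uniform boundedness and Proposition~\ref{T4}. Your separate treatment of $\nabla t_k\to\nabla t$ --- the product decomposition $y_j^{(k)}(\nabla x_j^{(k)}-\nabla x_j)+(y_j^{(k)}-y_j)\nabla x_j$ with the subsequence form of dominated convergence --- is valid (uniform boundedness and the already-proved horizontal convergence are used correctly), but it amounts to re-deriving by hand what Lemma~\ref{6.5} gives for free; invoking that lemma collapses your horizontal and $t$-component steps into one.
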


We will also need the following fact \cite[Lemma~6.5]{Heisenberg}.
\begin{lemma}
\label{6.5}
Let $f,g\in W^{1,p}(\Omega,\bbbh_n)$. Let $S$ be the set of points
$p\in\Omega$ for which $f(p)-g(p)\in Z$. Then $\nabla f=\nabla g$ a.e.\ in $S$.
\end{lemma}

\section{Differential forms, Sobolev spaces, and DeRham cohomology}
\label{forms}

In this section, we recall some notation and properties of
differential forms on manifolds, towards the goal of showing that if
the De\-Rham cohomology is zero, then also the  $L^p$-De\-Rham
cohomology is zero. See Proposition~\ref{la:poincare}. This result
essentially follows from the $L^p$-Hodge decomposition in
\cite{IwScStr99,Scott95}.

Before we start, we need to fix some notation. Let $\M$ and $\N$ be
$C^\infty$-smooth oriented Riemannian manifolds with or without
boundary. The volume form will be denoted by $\vol$. For smooth
mappings $f: \M \to \N$ we let $f^\ast: C^\infty(\Ep^\ell \N) \to
C^\infty(\Ep^\ell \M)$ be the pullback of $\ell$-forms. By $d$ we
denote the derivative of smooth mappings, $d: C^\infty(\M,\N) \to
C^\infty(T\M,T\N)$, as well as the exterior derivative of
$\ell$-forms, $d: C^\infty(\Ep^\ell \M) \to C^\infty(\Ep^{\ell+1}
\M)$. The Hodge operator and the co-differential will be denoted  by
$*\omega$ and $\delta\omega$, respectively.

Any exterior $\ell$-form $\omega \in \Ep^\ell \M$ can be expressed in
local coordinates $x = (x_1,\ldots,x_k): U \subset \M \to \R^k$ by
$$
 \omega = \sum_{1 \leq i_1 < i_2 < \ldots < i_\ell \leq k}
 \omega_{i_1,i_2,\ldots i_\ell}\ dx_{i_1} \wedge \ldots \wedge
 dx_{i_\ell} \quad \mbox{in $U$}.
$$
We only consider local coordinate systems such that $x(U)=\B^k(0,1)$
(or $x(U)=\B^k(0,1)\cap\R^k_{+}$, if we are near the boundary of $\M$)
and such that they can be smoothly extended to larger domains
$V\Supset\overline{U}$. This will guarantee boundedness of derivatives of all orders.
We say that $\omega \in C^\infty(\Ep^\ell\M)$, $\lip(\Ep^\ell\M)$,
$L^p(\Ep^\ell\M)$, $W^{1,p}(\Ep^\ell\M)$ if the coefficients
$\omega_{i_1,i_2,\ldots i_\ell}\circ x^{-1}$ belong to the corresponding space
on $\B^k(0,1)$ (or $\B^k(0,1)\cap\R^k_{+})$.
Here $W^{1,p}$ is the standard Sobolev space. 
The norm in $W^{1,p}(\Ep^\ell\M)$ is defined as the sum of norms of
$\omega_{i_1,i_2,\ldots i_\ell}\circ x^{-1}$ in 
$W^{1,p}(\B^k(0,1))$ (or $W^{1,p}(\B^k(0,1)\cap\R^k_+)$) over
a finite family of coordinate systems that cover $\M$. A different choice of
a family of coordinate systems covering $\M$ will give an equivalent norm.
The expression
$C_0^\infty(\Ep^\ell\M)$ will stand for smooth $\ell$-forms with
compact support. In the case of manifolds with boundary we require the
support to be disjoint from the boundary.

We will make frequent use of the identities
\begin{equation}
\label{eq:commute:fastomegawedgeeta}
 f^\ast (\omega \wedge \eta) =  f^\ast\omega \wedge f^\ast\eta,
\end{equation}
and
\begin{equation}
\label{eq:commute:dfast}
 d (f^\ast\eta) = f^\ast (d\eta).
\end{equation}
Note that \eqref{eq:commute:fastomegawedgeeta} and
\eqref{eq:commute:dfast} also hold in a weak sense, in fact, we have
\begin{lemma}
\label{pr:weakdfastcommute}
Let $\M$ be a smooth, $k$-dimensional oriented manifold with or without boundary.
\begin{enumerate}
\item If $f\in W^{1,1}_{\rm loc}(\M,\R^m)$, then \eqref{eq:commute:fastomegawedgeeta}
holds pointwise a.e.
\item If $f\in W^{1,p}_{\rm loc}(\M,\R^m)$, $p\geq\ell+1$, $0\leq\ell\leq k-1$,
and $\eta\in C^\infty(\Ep^\ell\R^m)\cap W^{1,\infty}$ (i.e. $\eta$ and $|\nabla \eta|$ are bounded), then
\eqref{eq:commute:dfast} holds in the weak sense, i.e.
$$
\int_{\M} f^*\eta\wedge d\varphi = (-1)^{\ell+1}\int_{\M} f^*(d\eta)\wedge \varphi
$$
for all $\varphi\in C_0^\infty(\Ep^{k-\ell-1}\M)$.
\item If $\eta\in W^{1,p}_{\rm loc}(\Ep^{\ell_1}\M)$,
$\omega\in W^{1,p}_{\rm loc}(\Ep^{\ell_2}\M)$, $\ell_1+\ell_2\leq k-2$,
$p\geq 2$,
then $d(\eta\wedge d\omega)=d\eta\wedge d\omega$ weakly in the sense that
$$
\int_{\M} \eta\wedge d\omega\wedge d\varphi =
(-1)^{\ell_1+\ell_2}\int_{\M} d\eta\wedge d\omega\wedge\varphi
$$
for all $\varphi\in C_0^\infty(\Ep^{k-\ell_1-\ell_2-2}\M)$.
\end{enumerate}
\end{lemma}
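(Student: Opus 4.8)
The plan is to reduce all three assertions to local statements in a single coordinate chart and then, for parts (2) and (3), to deduce them from the smooth identities \eqref{eq:commute:fastomegawedgeeta}--\eqref{eq:commute:dfast} by approximation. Since every test form $\varphi$ is compactly supported and, in the boundary case, has support disjoint from $\partial\M$, I may cover $\supp\varphi$ by finitely many interior charts, use a partition of unity, and thus work on a fixed ball $\B^k(0,1)\subset\R^k$ with $\supp\varphi$ compactly contained in its interior; boundary effects never enter. The one genuinely analytic ingredient is a product lemma: if $u_j\to u$ in $L^{p_1}_{\rm loc}$ and $v_j\to v$ in $L^{p_2}_{\rm loc}$ with $1/p_1+1/p_2=1/r\le 1$, then $u_jv_j\to uv$ in $L^{r}_{\rm loc}$, which follows from the splitting $u_jv_j-uv=(u_j-u)v_j+u(v_j-v)$ and H\"older's inequality, using that $\|v_j\|_{L^{p_2}}$ stays bounded. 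Iterating it, a product of $s$ factors each converging in $L^p_{\rm loc}$ converges in $L^{p/s}_{\rm loc}$. Part (1) is then immediate and purely pointwise: a map $f\in W^{1,1}_{\rm loc}$ is approximately differentiable a.e., the pullback of a smooth target form is defined at each such point through the linear map $df(p)$ and the value $f(p)$, and for a fixed linear map $L$ the multilinear-algebra identity $L^\ast(\omega\wedge\eta)=L^\ast\omega\wedge L^\ast\eta$ holds; evaluating at $L=df(p)$ gives \eqref{eq:commute:fastomegawedgeeta} a.e., with no integrability beyond $W^{1,1}$.

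For part (2) I would mollify, choosing $f_j\in C^\infty$ with $f_j\to f$ in $W^{1,p}_{\rm loc}$ and, along a subsequence, $f_j\to f$ a.e. For smooth $f_j$ the identity $d(f_j^\ast\eta)=f_j^\ast(d\eta)$ holds by \eqref{eq:commute:dfast}; wedging with $\varphi$, applying the Leibniz rule, and integrating $\int_\M d(f_j^\ast\eta\wedge\varphi)=0$ by Stokes (legitimate since $\varphi$ is compactly supported) yields the weak identity for each $f_j$ with the sign $(-1)^{\ell+1}$. It then remains to pass to the limit. In coordinates each coefficient of $f_j^\ast\eta$ is a sum of terms $\eta_I(f_j)$ times an $\ell\times\ell$ Jacobian minor of $df_j$: the factor $\eta_I(f_j)$ is bounded and converges a.e.\ (as $\eta$ is globally Lipschitz), while the minor is a product of $\ell$ first derivatives converging in $L^{p/\ell}_{\rm loc}$, so the product converges in $L^{p/\ell}_{\rm loc}$. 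Likewise $f_j^\ast(d\eta)$ has coefficients built from $(\partial_s\eta_I)(f_j)$ and products of $\ell+1$ derivatives, converging in $L^{p/(\ell+1)}_{\rm loc}$. The hypothesis $p\ge\ell+1$ forces $p/(\ell+1)\ge 1$, so both integrands converge in $L^1_{\rm loc}$ against the smooth compactly supported forms $d\varphi$ and $\varphi$, and the identity passes to the limit.

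Part (3) follows from the same scheme. For smooth $\eta_j\to\eta$ and $\omega_j\to\omega$ in $W^{1,p}_{\rm loc}$ one has $d(\eta_j\wedge d\omega_j)=d\eta_j\wedge d\omega_j+(-1)^{\ell_1}\eta_j\wedge d^2\omega_j=d\eta_j\wedge d\omega_j$, and integrating $\int_\M d(\eta_j\wedge d\omega_j\wedge\varphi)=0$ produces the weak formula with the stated sign $(-1)^{\ell_1+\ell_2}$. Here the integrands are products of two first-derivative factors — a coefficient of $\eta$ times a derivative of a coefficient of $\omega$ on the left, a derivative of $\eta$ times a derivative of $\omega$ on the right — so the product lemma gives convergence in $L^{p/2}_{\rm loc}$, and $p\ge 2$ is exactly what makes this $L^1_{\rm loc}$; both sides then converge against the smooth compactly supported test forms. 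The degree constraints $\ell\le k-1$ and $\ell_1+\ell_2\le k-2$ serve only to keep the degrees of the admissible $\varphi$ nonnegative.

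I expect the main obstacle to be the limit passage in (2) and (3): one must guarantee that the nonlinear quantities — the compositions $\eta\circ f$ and the products of several first derivatives — converge in $L^1_{\rm loc}$ under mere $W^{1,p}$ convergence, and this is precisely where the sharp thresholds $p\ge\ell+1$ and $p\ge 2$ are forced. The composition factors are the easy part, needing only boundedness of $\eta$ together with its first derivatives and a.e.\ convergence; the delicate point is the product lemma controlling the minors of $df_j$ and the wedge $d\eta_j\wedge d\omega_j$, whose integrability cannot be improved below $L^1_{\rm loc}$ without strengthening the exponent hypotheses.
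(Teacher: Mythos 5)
Your proposal is correct and follows essentially the same route as the paper: smooth approximation of $f$ (resp.\ $\eta$, $\omega$) in $W^{1,p}_{\rm loc}$, the classical Leibniz/Stokes identity for the smooth approximants, and passage to the limit, with part (1) treated as a pointwise multilinear-algebra fact. The paper compresses the limit passage into ``the result follows by letting $\eps\to 0$,'' whereas you spell out the H\"older product lemma and the role of the thresholds $p\geq\ell+1$ and $p\geq 2$ — a welcome elaboration, not a different method.
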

\begin{remark}
In particular (1) and (2) hold under the condition that $f\in W^{1,k}_{\rm loc}$ and (3)
holds under the assumption that $\eta,\omega\in W^{1,k}_{\rm loc}$. This is what we will need later on.
\end{remark}
\begin{proof}
(1) is obvious. Regarding (2), observe that $d(f^*\eta)$ is not
necessarily well defined in the pointwise sense since $f^*\eta$ is
only in $L^{p/\ell}_{\rm loc}$. Thus, we need to interpret the
statement in the weak sense. Let $f_\eps$ be a smooth approximation of
$f$ in $W^{1,p}_{\rm loc}$. Integration by parts gives
$$
\int_{\M} f_\eps^*\eta\wedge d\varphi = (-1)^{\ell+1}\int_{\M} f_\eps^*(d\eta)\wedge\varphi
$$
and the result follows by letting $\eps\to 0$. The proof of (3) is
similar. Let $\omega_\eps$ and $\eta_\eps$ be smooth approximations of
$\omega$ and $\eta$ in $W^{1,p}_{\rm loc}$. Integration by parts gives
$$
\int_{\M} \eta_\eps\wedge d\omega_\eps\wedge d\varphi =
(-1)^{\ell_1+\ell_2}\int_{\M} d\eta_\eps\wedge d\omega_\eps\wedge\varphi
$$
and the result follows by letting $\eps\to 0$.
\end{proof}

Also, we have the following version of the fundamental lemma of the
calculus of variations.

\begin{lemma}
\label{pr:fundLaCV}
Assume $\M$ to be a smooth, $k$-dimensional oriented manifold with or without boundary,
and let $\eta \in L^1_{\rm loc}(\Ep^\ell \M)$ be such that
$$
 \int_{\M} \eta \wedge \varphi = 0
\quad
\mbox{for all $\varphi \in C_0^\infty(\Ep^{k-\ell}\M)$.}
$$
Then $\eta = 0$ almost everywhere in $\M$.
\end{lemma}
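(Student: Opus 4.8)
The plan is to reduce the global statement to the classical (scalar) fundamental lemma of the calculus of variations, applied coefficient-by-coefficient inside coordinate charts. Since the conclusion is an almost-everywhere statement and $\partial\M$ has $k$-dimensional measure zero, it suffices to prove that $\eta$ vanishes a.e.\ on the interior of $\M$; thus I may work entirely with interior coordinate charts $x=(x_1,\dots,x_k):U\to\B^k(0,1)$ and disregard the boundary, where anyway the test forms $\varphi\in C_0^\infty(\Ep^{k-\ell}\M)$ are required to have support.

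Fix such a chart $U$ and write $\eta=\sum_{J}\eta_J\,dx_J$ in $U$, where the sum runs over increasing multi-indices $J=(j_1<\cdots<j_\ell)$ and $\eta_J\in L^1_{\rm loc}(U)$. For a fixed increasing multi-index $I$ let $I^c$ denote the complementary increasing multi-index of length $k-\ell$, so that $dx_I\wedge dx_{I^c}=\sigma_I\,dx_1\wedge\cdots\wedge dx_k$ for some sign $\sigma_I\in\{+1,-1\}$. Given any scalar $g\in C_0^\infty(U)$, the form $\varphi=g\,dx_{I^c}$ extends by zero to an element of $C_0^\infty(\Ep^{k-\ell}\M)$. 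The key computation is the wedge product: since $dx_J\wedge dx_{I^c}=0$ whenever $J\neq I$ (a repeated differential occurs), only the term $J=I$ survives, giving
$$
\eta\wedge\varphi=\sigma_I\,\eta_I\,g\;dx_1\wedge\cdots\wedge dx_k.
$$

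The hypothesis therefore yields $\sigma_I\int_U\eta_I\,g\,dx=0$, and since $\sigma_I=\pm1$ this means $\int_U\eta_I\,g\,dx=0$ for every $g\in C_0^\infty(U)$. By the classical fundamental lemma of the calculus of variations for $L^1_{\rm loc}$ functions, $\eta_I=0$ a.e.\ in $U$. Letting $I$ range over all increasing $\ell$-multi-indices shows that every coefficient of $\eta$ vanishes a.e.\ in $U$, hence $\eta=0$ a.e.\ in $U$. Covering the interior of $\M$ by countably many such charts (and using that $\partial\M$ is null) gives $\eta=0$ a.e.\ in $\M$.

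The only genuinely delicate point is the bookkeeping in the wedge-product identity, namely verifying that every term with $J\neq I$ drops out and correctly tracking the orientation sign $\sigma_I$; but this is routine multilinear algebra, and I expect no substantive obstacle beyond it.
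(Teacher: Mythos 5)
Your proof is correct and follows essentially the same route as the paper's: test with forms $g\,dx_{I^c}$ supported in a single chart, note that only the $I$-th coefficient of $\eta$ survives the wedge product, and invoke the classical scalar fundamental lemma of the calculus of variations coefficient by coefficient. The only cosmetic difference is that the paper inserts a Jacobian factor $|\det Dx|$ into the test form so that $\int_{\M}\eta\wedge\varphi$ becomes literally $\pm\int_{x(U)}(\eta_I\circ x^{-1})\,\psi$ with respect to Lebesgue measure, whereas you rely on the invariant definition of integrating a top-degree form in a chart; both are routine bookkeeping.
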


\begin{proof}
Let $U \subset \M$ be a coordinate patch with coordinate functions
$x = (x_1,\ldots,x_k): U \to \R^k$. Then
$$
 \eta = \sum_{1 \leq i_1 < \ldots < i_\ell \leq k} f_{i_1\ldots i_\ell} dx_{i_1} \wedge \ldots \wedge dx_{i_\ell} \quad \mbox{in $U$}.
$$
It suffices to show that $f_{i_1\ldots i_\ell}\circ x^{-1}=0$ a.e.\ in $x(U)$. Fix $1\leq i_1<\ldots<i_\ell\leq k$.
For a given $\psi \in C_0^\infty(x(U))$, let
$$
 \varphi := \psi \circ x\ |\det Dx|\ dx_{j_1}\wedge\ldots\wedge dx_{j_{k-\ell}},
$$
where $\{j_1,\ldots,j_{k-\ell}\}=\{1,2,\ldots,k\}\setminus \{i_1,\ldots,i_\ell\}$. Then
$\varphi \in C_0^\infty(\Ep^{k-\ell}\M)$ and consequently
$$
 0 = \int_{\M} \eta \wedge \varphi =
 \pm\int_{\M} f_{i_1\ldots i_\ell}\cdot \psi\circ x\ |\det Dx|\ \vol \equiv
 \pm\int_{x(U)} f_{i_1\ldots i_\ell}\circ x^{-1}\cdot \psi.
$$
Since $f_{i_1\ldots i_\ell}\circ x^{-1} \in L^1_{\rm loc}(x(U))$,
and the test function $\psi\in C_0^\infty(x(U))$ can be chosen
arbitrarily, the classical fundamental lemma of the calculus of
variations implies that $f_{i_1\ldots i_\ell}\circ x^{-1} = 0$ almost
everywhere in $x(U)$.
\end{proof}

We will need the following $L^p$-Hodge decomposition \cite[Proposition~6.5]{Scott95}.

\begin{lemma}[$L^p$-Hodge Decomposition]
\label{la:Hodgedecomp}
Let $\M$ be a smooth, compact, $k$-dimensional oriented manifold
without boundary and let  $\Omega\subset\M$ be an open subset. Then
for any $p\in (1,\infty)$ and any $\ell$-form $\eta\in
L^p(\Ep^\ell\Omega)$, $1\leq \ell\leq k$ there exist  $\omega_1\in
W^{1,p}(\Ep^{\ell-1}\Omega)$, $\omega_2\in
W^{1,p}(\Ep^{\ell+1}\Omega)$ such that
\begin{equation}
\label{eq:hodgedecomp}
\eta=d\omega_1+\delta\omega_2+h
\end{equation}
where $h\in C^\infty(\Ep^\ell\Omega)$ is
closed $dh=0$ and co-closed $\delta h=0$ and hence
harmonic.
\end{lemma}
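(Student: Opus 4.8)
The plan is to deduce the decomposition from the $L^p$ mapping properties of the Hodge Laplacian $\Delta = d\delta + \delta d$ on the closed manifold $\M$. First I would remove the open set $\Omega$ from the picture: since $\eta\in L^p(\Ep^\ell\Omega)$, its extension by zero $\tilde\eta$ lies in $L^p(\Ep^\ell\M)$ (no regularity across $\partial\Omega$ is needed for an $L^p$ form), so it suffices to produce a global decomposition $\tilde\eta = d\omega_1 + \delta\omega_2 + h$ on $\M$ and then restrict all three forms to $\Omega$. Restriction preserves the classes $W^{1,p}$ and $C^\infty$, and a globally closed, co-closed form remains closed and co-closed on $\Omega$, so the restricted data satisfy all the stated requirements.

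On the compact boundaryless manifold $\M$ the operator $\Delta$ is elliptic and formally self-adjoint, so classical $L^2$ Hodge--Kodaira theory supplies a finite-dimensional space $\mathcal{H}^\ell=\ker\Delta$ of smooth harmonic $\ell$-forms together with the Green operator $G$ satisfying $\Delta G = G\Delta = I-H$, where $H$ is the $L^2$-orthogonal projection onto $\mathcal{H}^\ell$. Granting the $L^p$ estimates discussed below, I would simply set
$$
\omega_1 := \delta G\tilde\eta, \qquad \omega_2 := d G\tilde\eta, \qquad h := H\tilde\eta,
$$
so that $d\omega_1 + \delta\omega_2 + h = (d\delta+\delta d)G\tilde\eta + H\tilde\eta = \Delta G\tilde\eta + H\tilde\eta = \tilde\eta$. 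Here $\omega_1$ is an $(\ell-1)$-form and $\omega_2$ an $(\ell+1)$-form, as required by the degree shifts of $\delta$ and $d$. The harmonic part $h$ is smooth by elliptic regularity, and on a closed manifold $\Delta h=0$ forces $\|dh\|_{L^2}^2 + \|\delta h\|_{L^2}^2 = \langle \Delta h, h\rangle = 0$, whence $dh=0$ and $\delta h=0$; this is precisely the closedness and co-closedness claimed.

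What remains, and what I expect to be the main obstacle, is the $L^p$ regularity statement: $G$ maps $L^p(\Ep^\ell\M)$ boundedly into $W^{2,p}(\Ep^\ell\M)$ and $H$ maps $L^p$ boundedly into $\mathcal{H}^\ell$, for every $p\in(1,\infty)$. With this in hand $G\tilde\eta\in W^{2,p}$, so $\omega_1=\delta G\tilde\eta$ and $\omega_2=d G\tilde\eta$, each involving a single derivative of $G\tilde\eta$, lie in $W^{1,p}$. The $L^2$ version of these bounds is immediate from spectral theory, but the passage to $L^p$ is genuinely analytic: it rests on Calder\'on--Zygmund estimates for the parametrix of the elliptic operator $\Delta$ (equivalently, on the $L^p$-boundedness of the Riesz-type transforms $d\,\Delta^{-1/2}$ and $\delta\,\Delta^{-1/2}$, together with the smoothing property of the finite-rank projection $H$). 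This is exactly the content drawn from \cite{Scott95,IwScStr99}, and I would invoke it rather than reprove the underlying singular-integral theory; everything else in the argument is the formal Hodge-theoretic bookkeeping carried out above.
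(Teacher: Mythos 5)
Your proposal is correct and follows essentially the same route as the paper: both reduce the case of a general open subset $\Omega$ to the closed manifold $\M$ by zero-extension and restriction, and both ultimately rest the core case $\Omega=\M$ on the $L^p$-Hodge theory of \cite{Scott95} (the paper simply cites Proposition~6.5 there). Your unpacking of that citation via the Green operator, with $\omega_1=\delta G\tilde\eta$, $\omega_2=dG\tilde\eta$, $h=H\tilde\eta$ and the $L^p\to W^{2,p}$ bound for $G$, is the standard derivation of the cited result rather than a different argument.
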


In the case when $\Omega=\M$ the result was proved in
\cite[Proposition~6.5]{Scott95} and in the case of a general open set
we simply extend $\eta$ to $L^p(\Ep^\ell\M)$ by zero, apply the Hodge
decomposition on $\M$ and restrict all the resulting forms to
$\Omega$.

Note that the above result applies to the manifold $\M\times (0,1)$
since it can be isometrically embedded into $\M\times \S^1$ as an open
set. We will need this special case when we show
(Proposition~\ref{pr:HIhomotopicinv}) that the Hopf invariant is in
fact invariant under Lipschitz homotopies.

As an application of the Hodge decomposition we prove that if the
DeRham cohomology of an open set $\Omega\subset\M$ is zero, then also
the $L^p$-DeRham cohomology is zero. More precisely we will show

\begin{proposition}
\label{la:poincare}
Let $\M$ and $\Omega$ be as in Lemma~\ref{la:Hodgedecomp}. Suppose
that $H^\ell_{DR}(\Omega)=\{ 0\}$ for some $1\leq \ell\leq k$, i.e.
every smooth closed $\ell$-form on $\Omega$ is exact. Let $\eta\in
L^p(\Ep^\ell\Omega)$, $p\in (1,\infty)$ be weakly closed, i.e.
\begin{equation}
\label{eq:poincweakdeta}
\int_{\Omega} \eta\wedge d\varphi = 0
\quad
\mbox{for all $\varphi\in C_0^\infty(\Ep^{k-\ell-1}\Omega)$.}
\end{equation}
Then there exists $\omega\in W^{1,p}_{\rm loc}(\Ep^{\ell-1}\Omega)$
such that
$$
\eta=d\omega
\quad
\mbox{a.e.,}
$$
in particular, $\eta$ is exact in the weak sense.

If $\Omega = \M$ is compact without boundary, then $\omega \in
W^{1,p}(\Ep^{\ell-1}\M)$ with the estimate
\begin{equation}
\label{eq:omegapoincest}
\vrac{\omega}_{W^{1,p}(\M)}  \leq C\ \vrac{\eta}_{L^p(\M)},
\end{equation}
where the constant $C$ depends only on $\M$, $p$, and the norm in
$W^{1,p}(\M)$.
\end{proposition}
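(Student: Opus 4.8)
The plan is to run the $L^p$-Hodge decomposition of Lemma~\ref{la:Hodgedecomp} and then show that its co-exact and harmonic constituents are in fact \emph{smoothly} exact, so that $\eta$ becomes a total exterior derivative. Applying the Lemma to $\eta\in L^p(\Ep^\ell\Omega)$ I would write
\[
\eta=d\omega_1+\delta\omega_2+h,\qquad \omega_1\in W^{1,p}(\Ep^{\ell-1}\Omega),\ \ \omega_2\in W^{1,p}(\Ep^{\ell+1}\Omega),
\]
with $h\in C^\infty(\Ep^\ell\Omega)$ harmonic. Since $h$ is smooth and closed and $H^\ell_{DR}(\Omega)=\{0\}$, it is exact, $h=d\gamma_1$ with $\gamma_1\in C^\infty(\Ep^{\ell-1}\Omega)$; so the only genuinely problematic term is the co-exact piece $\beta:=\delta\omega_2$, which a priori is merely an $L^p$-form.

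The key point is that $\beta$ is \emph{both} weakly closed and weakly co-closed, hence weakly harmonic, hence smooth. First, $\beta$ is weakly closed: testing the hypothesis \eqref{eq:poincweakdeta} against $\varphi\in C_0^\infty(\Ep^{k-\ell-1}\Omega)$ and integrating by parts (legitimate for $W^{1,p}$ forms paired with compactly supported smooth test forms, as in Lemma~\ref{pr:weakdfastcommute}) I get $\int_\Omega d\omega_1\wedge d\varphi=\pm\int_\Omega\omega_1\wedge d(d\varphi)=0$ and $\int_\Omega h\wedge d\varphi=\pm\int_\Omega dh\wedge\varphi=0$, whence $\int_\Omega\beta\wedge d\varphi=\int_\Omega\eta\wedge d\varphi=0$. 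Second, $\beta=\delta\omega_2$ is automatically weakly co-closed because $\delta\delta=0$ (again justified by smooth approximation of $\omega_2$ in $W^{1,p}$). Thus $\Delta\beta=(d\delta+\delta d)\beta=0$ in the weak sense, and Weyl's lemma / interior elliptic regularity for the Hodge Laplacian upgrades $\beta$ to a smooth harmonic form on $\Omega$, in particular a smooth closed form. Invoking $H^\ell_{DR}(\Omega)=\{0\}$ once more, $\beta=d\gamma_2$ with $\gamma_2\in C^\infty(\Ep^{\ell-1}\Omega)$. Setting $\omega:=\omega_1+\gamma_1+\gamma_2$ gives $\eta=d\omega$ weakly, with $\omega\in W^{1,p}_{\rm loc}(\Ep^{\ell-1}\Omega)$: here $\omega_1\in W^{1,p}\subset W^{1,p}_{\rm loc}$ while $\gamma_1,\gamma_2$ are smooth, hence only \emph{locally} in $W^{1,p}$, which is exactly why the global conclusion weakens to $W^{1,p}_{\rm loc}$ on a general open set.

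In the compact case $\Omega=\M$ the argument simplifies and yields the stated estimate. By the Hodge theorem the space of harmonic $\ell$-forms is isomorphic to $H^\ell_{DR}(\M)=\{0\}$, so both the harmonic term $h$ and the smooth harmonic form $\beta$ produced above vanish identically; hence $\eta=d\omega_1$ with $\omega_1\in W^{1,p}(\Ep^{\ell-1}\M)$, and the inequality $\|\omega_1\|_{W^{1,p}(\M)}\le C\,\|\eta\|_{L^p(\M)}$ is precisely the boundedness of the Hodge projection $\eta\mapsto\omega_1$ furnished by the $L^p$-Hodge decomposition.

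I expect the main obstacle to be the regularity bootstrap for the co-exact term. On a general open $\Omega$ one cannot simply run a duality/orthogonality argument to kill $\delta\omega_2$ directly, because decomposing a compactly supported test form reintroduces pieces that are \emph{not} compactly supported, so the weak-closedness hypothesis no longer applies and the problem threatens to become circular with the statement itself. The honest route is therefore to exploit the extra structure $\beta=\delta\omega_2$, observe that $\beta$ is weakly harmonic, promote it to a smooth closed form by elliptic regularity, and only then let the topological hypothesis $H^\ell_{DR}(\Omega)=\{0\}$ finish the job. A secondary technical point, to be handled with Lemma~\ref{pr:weakdfastcommute} and Lemma~\ref{pr:fundLaCV}, is the careful justification of all the integration-by-parts identities for $W^{1,p}$ forms tested against compactly supported smooth forms, including $\delta\delta\omega_2=0$ in the weak sense.
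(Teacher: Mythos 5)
Your treatment of the general open set $\Omega$ is essentially the paper's own proof, step for step: Hodge-decompose $\eta=d\omega_1+\delta\omega_2+h$, kill $h$ by $H^\ell_{DR}(\Omega)=\{0\}$, transfer the weak-closedness hypothesis to $\beta=\delta\omega_2$ by integrating $d\omega_1$ by parts against $d\varphi$ (via smooth approximation), note that $\beta$ is weakly co-closed because $\delta\delta=0$, conclude $\Delta\beta=0$ weakly, apply Weyl's lemma, and use $H^\ell_{DR}(\Omega)=\{0\}$ once more; even your closing remark about why one cannot decompose the test form (circularity) mirrors the paper's design. One caution on wording: on a non-compact $\Omega$, ``smooth harmonic, in particular closed'' is not a valid inference, since $\Delta\beta=0$ alone does not give $d\beta=0$ there; what closes this step (and what the paper says explicitly) is that $\beta$ is \emph{weakly closed} and now smooth, hence closed in the classical sense. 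You have both facts in hand, so this is a matter of phrasing, not substance.

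Where you genuinely diverge is the compact case $\Omega=\M$. The paper keeps all three pieces, forms $\tilde\omega=\omega_1+\omega_3+\omega_4\in W^{1,p}(\Ep^{\ell-1}\M)$, and obtains the estimate \eqref{eq:omegapoincest} by subtracting a weakly closed form $\omega_5$ supplied by \cite[Theorem~6.4]{IwScStr99}, so that $\vrac{\tilde\omega-\omega_5}_{W^{1,p}}\leq C\vrac{d\tilde\omega}_{L^p}=C\vrac{\eta}_{L^p}$. You instead invoke the Hodge theorem on compact $\M$ (harmonic $\ell$-forms $\cong H^\ell_{DR}(\M)=\{0\}$) to conclude $h=0$ and $\beta=0$, hence $\eta=d\omega_1$ outright; this is correct and arguably cleaner for the existence statement. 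However, your estimate then rests on ``boundedness of the Hodge projection $\eta\mapsto\omega_1$,'' which is \emph{not} contained in Lemma~\ref{la:Hodgedecomp} as stated: that lemma asserts only existence of a decomposition, with no norm bounds, and since the decomposition as stated is not unique, the map $\eta\mapsto\omega_1$ is not even well defined from it. To close this you must either quote the quantitative form of Scott's theorem (where $\omega_1=\delta G\eta$ with $G$ the Green operator, bounded from $L^p$ to $W^{2,p}$, which does carry the bound you want), or fall back on the paper's route through \cite[Theorem~6.4]{IwScStr99}. As written, this final step is a citation gap rather than a mathematical error, but it is the one place your argument does not stand on the tools the paper provides.
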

\begin{proof}
From Lemma~\ref{la:Hodgedecomp}, we obtain $\omega_1 \in W^{1,p}(\Ep^{\ell-1} \Omega)$, $\omega_2 \in W^{1,p}(\Ep^{\ell+1} \Omega)$,
$h \in C^\infty(\Ep^\ell \Omega)$, $dh = 0$, $\delta h = 0$, such that
$$
 \eta = d \omega_1 + \delta \omega_2 + h.
$$
Since $h \in C^\infty(\Ep^\ell \Omega)$, $dh = 0$, and
$H^\ell_{DR}(\Omega)=\{ 0\}$, there exists
$\omega_3 \in C^\infty(\Ep^{\ell-1} \Omega)$ such that $d\omega_3 =h$. Consequently,
\begin{equation}
\label{eq:lapoincstep1}
 \eta = d \brac{\omega_3 + \omega_1} + \delta \omega_2.
\end{equation}
Note that for any $\varphi \in C_0^\infty(\Ep^{k-\ell-1}\Omega)$, and for any $f \in W^{1,p}_{\rm loc}(\Ep^{\ell-1}\Omega)$,
by approximation
\begin{equation}
\label{932}
 \int_{\Omega} df \wedge d\varphi = \int_{\Omega} d(f \wedge d \varphi) = 0.
\end{equation}
Hence, from \eqref{eq:poincweakdeta} and \eqref{eq:lapoincstep1}
we infer that for any $\varphi \in C_0^\infty(\Ep^{k-\ell-1}\Omega)$,
$$
\int_{\Omega} \delta\omega_2\wedge d\varphi =
\int_{\Omega} d(\omega_3+\omega_1)\wedge d\varphi + \int_{\Omega}\delta\omega_2\wedge d\varphi =
\int_{\Omega} \eta\wedge d\varphi =0,
$$
i.e. $\delta\omega_2$ is weakly closed.
In particular, for any $\varphi \in C_0^\infty(\Ep^{k-\ell}\Omega)$
\begin{align*}
 \int_{\Omega} \delta \omega_2 \wedge (d\delta + \delta d)\varphi &=
 \int_{\Omega} \delta \omega_2 \wedge \delta d\varphi\\
 &=  \pm \int_{\Omega} *d* \omega_2 \wedge * d* d\varphi\\
 &= \pm \int_{\Omega} d* \omega_2 \wedge d* d\varphi = 0,
\end{align*}
where the last equality again follows from approximation and integration by parts
just like in \eqref{932}. That is, in the weak sense
$$
 \Delta \delta \omega_2 = 0,
$$
where $\Delta$ is the Laplace-Beltrami operator. Thus $\delta
\omega_2$ is actually smooth, see, e.g., \cite[Theorem 6.5]{Warner} or
(for the local version) \textsection 6.35 and Exercise 14 on p.\ 253
of \cite{Warner}. Since $\delta\omega_2$ is weakly closed and smooth,
it is closed in the usual sense $d(\delta\omega_2)=0$. Again,
$H^\ell_{DR}(\Omega)=0$ implies that there is $\omega_4\in
C^\infty(\Ep^{\ell-1}\Omega)$ such that $d\omega_4=\delta\omega_2$.
We have shown that
$$
 \eta = d \brac{\omega_3 + \omega_1 + \omega_4},
$$
and
$$
 \tilde{\omega} := \omega_3 + \omega_1 + \omega_4 \in 
 W^{1,p}(\Ep^{\ell-1}\Omega) + C^\infty(\Ep^{\ell-1}\Omega) \subset W^{1,p}_{\rm loc}(\Ep^{\ell-1} \Omega).
$$
If $\Omega \subset \M$ is any open subset, and we do not expect estimate \eqref{eq:omegapoincest}, we choose $\omega := \tilde{\omega}$.

Note however, that this choice of $\omega$ is not unique. In fact,
setting $\omega := \tilde{\omega} - \omega_5$ for any weakly closed
$\omega_5\in W^{1,p}$, we have
$$
 d\omega = d\tilde{\omega} = \eta \quad \mbox{a.e.\ in $\Omega$}.
$$
If $\Omega = \M$ is compact without boundary then $\tilde{\omega} \in
W^{1,p}_{\rm loc}(\Ep^{\ell-1}\M) = W^{1,p}(\Ep^{\ell-1}\M)$. By
\cite[Theorem 6.4]{IwScStr99} there exists a weakly closed form
$\omega_5$ such that $\omega := \tilde{\omega}-\omega_5 \in
W^{1,p}(\Ep^{\ell-1}\M)$ satisfies
$$
 \vrac{\omega}_{W^{1,p}} = \vrac{\tilde{\omega}-\omega_5}_{W^{1,p}}
 \leq C\ \vrac{d\tilde{\omega}}_{L^p} = C\ \vrac{\eta}_{L^p}.
$$
This concludes the proof of Proposition~\ref{la:poincare}.
\end{proof}

\section{Hopf invariant for low-rank mappings}
\label{sec:hopfinvariant}

Let $\alpha$ be the volume form on $\S^{2n}$. Then for any smooth
mapping $f:\S^{4n-1}\to \S^{2n}$ we have that
$d(f^*\alpha)=f^*(d\alpha)=0$, so $f^*\alpha=d\omega$ for some smooth
$2n-1$ form $\omega$, because $H^{2n}_{DR}(\S^{4n-1})=\{ 0\}$. The
classical Hopf invariant of $f$ is defined via the Whitehead formula
\begin{equation}
\label{eq:classicHopfinv}
\HI f=\int_{\S^{4n-1}}\omega\wedge d\omega.
\end{equation}
See \cite{BT82} for details and basic properties.

Hopf \cite[Satz II, Satz II']{Hopf} proved the following important result.
\begin{lemma}
\label{la:hopffibration}
For any $n \in \mathbb{N}$ there exists a smooth map
$f: \S^{4n-1} \to \S^{2n}$ such that $\HI f \neq 0$.
\end{lemma}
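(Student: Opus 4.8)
The plan is to exhibit one explicit element of $\pi_{4n-1}(\S^{2n})$ whose Hopf invariant is nonzero, namely the \emph{self Whitehead product} $[\iota,\iota]$ of the identity $\iota\colon\S^{2n}\to\S^{2n}$, and to show that $\HI[\iota,\iota]=\pm2$. Since the integral in \eqref{eq:classicHopfinv} is a homotopy invariant (Proposition~\ref{pr:HIhomotopicinv}, together with the smooth invariance in \cite{BT82}) and every continuous map between the smooth manifolds $\S^{4n-1}$ and $\S^{2n}$ is homotopic to a smooth one, it suffices to compute $\HI$ on any convenient representative. I would carry out the computation through the classical cup-product description of the Hopf invariant, which for smooth maps coincides with the de Rham integral \eqref{eq:classicHopfinv}; see \cite{BT82}.

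\emph{Construction.} Give the product $\S^{2n}\times\S^{2n}$ its standard CW structure: its $2n$-skeleton is the wedge $\S^{2n}\vee\S^{2n}$, and the single top cell $e^{4n}$ is attached along the Whitehead product $w=[\iota_1,\iota_2]\colon\S^{4n-1}\to\S^{2n}\vee\S^{2n}$ of the two inclusions. Let $\nabla\colon\S^{2n}\vee\S^{2n}\to\S^{2n}$ be the fold map and set $f=\nabla\circ w\colon\S^{4n-1}\to\S^{2n}$; this is a representative of $[\iota,\iota]$. Because $\nabla$ carries the attaching map $w$ to $f$, it extends over the top cells to a map $\Phi\colon\S^{2n}\times\S^{2n}\to C_f$, where $C_f=\S^{2n}\cup_f e^{4n}$ is the mapping cone; on the $2n$-skeleton $\Phi$ agrees with $\nabla$, and it restricts to a homeomorphism on the interior of the top cell, hence has degree $\pm1$ there.

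\emph{Computation.} Write $H^{2n}(C_f)=\Z\langle u\rangle$ and $H^{4n}(C_f)=\Z\langle v\rangle$, so that $u\cup u=(\HI f)\,v$ by the cohomological definition of the Hopf invariant. Let $a,b\in H^{2n}(\S^{2n}\times\S^{2n})$ be the pullbacks of the generator of $H^{2n}(\S^{2n})$ under the two projections; by K\"unneth, $a^2=b^2=0$ while $ab$ generates $H^{4n}$. Since $\Phi$ restricts to the fold map, $\Phi^*u=a+b$, and since $\Phi$ has degree $\pm1$ on the top cell, $\Phi^*v=\pm\,ab$. Applying $\Phi^*$ to $u\cup u=(\HI f)\,v$ yields
$$
(a+b)^2=a^2+2ab+b^2=2ab=\pm(\HI f)\,ab,
$$
where we used that $a,b$ lie in even degree and therefore commute. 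Hence $\HI f=\pm2\neq0$, proving the lemma for every $n$.

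The computation itself is short; the genuine content is imported from classical background, which is where the care is needed: that the integral definition \eqref{eq:classicHopfinv} coincides with the cup-product definition for smooth maps, and that the top cell of $\S^{2n}\times\S^{2n}$ is attached by the Whitehead product so that $f=\nabla\circ w$ represents $[\iota,\iota]$. The decisive algebraic input is that $u$ sits in the \emph{even} degree $2n$, so that $(a+b)^2=2ab$ rather than $0$; this is exactly where the even-dimensionality of the target enters, and it explains why the construction still yields a nonzero (in fact even) Hopf invariant for every $n$, even though by Adams' theorem the value $1$ is unattainable except for $n=1,2,4$. An alternative, more geometric route avoiding the cup-product formalism is to take a smooth representative, choose two regular values $p,q\in\S^{2n}$, and identify $\HI f$ with the linking number in $\S^{4n-1}$ of the disjoint $(2n-1)$-manifolds $f^{-1}(p)$ and $f^{-1}(q)$; for the fold-of-Whitehead representative this linking number is visibly $\pm2$.
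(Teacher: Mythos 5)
Your proof is correct, but it is genuinely different from what the paper does: the paper offers no argument at all for this lemma, simply citing Hopf's original 1935 theorem (Satz II, Satz II' of \cite{Hopf}), which constructs maps $\S^{4n-1}\to\S^{2n}$ of Hopf invariant $2$ geometrically, via linking numbers of preimages of regular values --- essentially the ``alternative, more geometric route'' you sketch in your closing sentence. What you supply instead is the standard modern computation: the self Whitehead product $[\iota,\iota]$ has Hopf invariant $\pm 2$, proved by mapping $\S^{2n}\times\S^{2n}$ to the cone $C_f$, pulling back $u\cup u=(\HI f)\,v$, and using $(a+b)^2=2ab$ in even degree. The computation is sound, including the two points you correctly flag as the real content: the agreement of the cup-product definition with the Whitehead integral \eqref{eq:classicHopfinv} for smooth representatives (this is in \cite{BT82}), and the fact that the top cell of $\S^{2n}\times\S^{2n}$ is attached along $[\iota_1,\iota_2]$ so that $f=\nabla\circ w$ represents $[\iota,\iota]$. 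One small remark: to smooth the continuous representative and invoke homotopy invariance you do not really need the paper's Proposition~\ref{pr:HIhomotopicinv} (which concerns Lipschitz rank-$2n$ homotopies into $\R^m$); the classical smooth homotopy invariance of \eqref{eq:classicHopfinv} from \cite{BT82} suffices, though the paper's proposition does apply via Proposition~\ref{pr:classicVSthisHI} since maps into $\S^{2n}$ automatically satisfy the rank condition. The trade-off: the paper's citation is shortest and rests on Hopf's explicit construction, while your argument is self-contained modulo standard algebraic topology and makes transparent both why the invariant is even and why the even-dimensionality of the target is what makes the construction work.
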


In this section we will generalize the Hopf invariant to Lipschitz
mappings $f: \S^{4n-1} \to \R^{m}$, $m \geq 2n+1$, with $\rank df \leq
2n$ almost everywhere. Let us first give the construction for smooth
$f$, $\rank df \leq 2n$. Let $\alpha$ be any smooth $2n$-form in
$\R^m$. Since $\rank df \leq 2n$ and $d\alpha$ is a $(2n+1)$-form, we
have
\begin{equation}
\label{eq:dfastalphaeq0}
d (f^\ast \alpha) = f^\ast (d\alpha) = 0,
\end{equation}
because the determinant of every ($2n+1$)-dimensional minor of $df$ has to be zero.
Thus there exists a $(2n-1)$-form $\omega$, such that
\begin{equation}
\label{eq:domegaeqfastalpha}
 d\omega = f^\ast \alpha.
\end{equation}
The Hopf invariant of $f$ is defined by
\begin{equation}
\label{eq:def:hopfinvariant}
\HI_\alpha f := \int_{\S^{4n-1}} \omega \wedge d\omega.
\end{equation}
It depends on $\alpha$, but we will show
that $\HI_\alpha f$ is independent of the particular choice of $\omega$,
and that it is actually invariant under Lipschitz homotopies with rank of the derivative
less than or equal $2n$. Obviously, if $f$ is a constant map, then $\HI_\alpha f = 0$. Moreover,

\begin{proposition}
\label{pr:classicVSthisHI}
Let $\S^{2n}$ be isometrically embedded into $\R^m$, $m\geq 2n+1$ and let
$\alpha$ be the volume form of $\S^{2n}$ smoothly extended to $\R^m$.
Then $\HI_\alpha f = \HI f$ for any smooth $f: \S^{4n-1} \to \S^{2n}
\subset \R^m$, where $\HI f$ is the classical Hopf invariant defined
in \eqref{eq:classicHopfinv}. In particular there is a smooth map
$f:\S^{4n-1}\to\R^m$ such that $\HI_\alpha f\neq 0$.
\end{proposition}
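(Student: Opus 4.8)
The plan is to prove that $\HI_\alpha f = \HI f$ when $\alpha$ is the extended volume form of an isometrically embedded $\S^{2n} \subset \R^m$ and $f: \S^{4n-1} \to \S^{2n}$ is smooth. The key observation is that both invariants are computed by the identical recipe (find a primitive $\omega$ with $d\omega$ equal to the pullback of the relevant top form, then integrate $\omega \wedge d\omega$), so it suffices to check that the pullbacks agree and that the choice of primitive does not matter.

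First I would verify that $f^\ast \alpha$ is the same form whether we regard $\alpha$ as the volume form intrinsically on $\S^{2n}$ or as its smooth extension to $\R^m$. Write $f = \iota \circ \tilde{f}$, where $\tilde{f}: \S^{4n-1} \to \S^{2n}$ is the corestriction and $\iota: \S^{2n} \hookrightarrow \R^m$ is the isometric embedding. By functoriality of pullback, $f^\ast \alpha = \tilde{f}^\ast(\iota^\ast \alpha)$, and since $\alpha$ restricts to the intrinsic volume form on $\S^{2n}$ (that is exactly what ``extended'' means), $\iota^\ast \alpha$ is the volume form used in the classical definition \eqref{eq:classicHopfinv}. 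Hence the $2n$-form $f^\ast \alpha$ appearing in \eqref{eq:domegaeqfastalpha} coincides with the form whose primitive enters \eqref{eq:classicHopfinv}.

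Next I would establish that the value of $\int_{\S^{4n-1}} \omega \wedge d\omega$ does not depend on the choice of primitive $\omega$ satisfying $d\omega = f^\ast \alpha$; this is precisely the well-definedness that must be shown anyway for $\HI_\alpha$ to make sense, and it makes the two constructions literally identical. If $\omega$ and $\omega'$ are two such primitives, then $\omega - \omega'$ is closed, and since $H^{2n-1}_{DR}(\S^{4n-1}) = \{0\}$ there is a form $\sigma$ with $\omega - \omega' = d\sigma$. A short computation using Stokes' theorem on the closed manifold $\S^{4n-1}$ (together with $d(f^\ast \alpha) = 0$ from \eqref{eq:dfastalphaeq0}) shows the integral is unchanged. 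Combining this with the agreement of pullbacks, we may use one and the same $\omega$ in both \eqref{eq:classicHopfinv} and \eqref{eq:def:hopfinvariant}, yielding $\HI_\alpha f = \HI f$ directly. The final assertion then follows by invoking Lemma~\ref{la:hopffibration}: taking $f$ to be the Hopf map with $\HI f \neq 0$, we get a smooth $f: \S^{4n-1} \to \R^m$ with $\HI_\alpha f \neq 0$.

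I do not expect a serious obstacle here, since the statement is essentially a bookkeeping identity between two definitions that share the same formula; the only genuine content is the functoriality check $\iota^\ast \alpha = \mathrm{vol}_{\S^{2n}}$ and the cohomological argument for independence of $\omega$. If any subtlety arises, it would be in ensuring that the smooth extension of $\alpha$ to $\R^m$ does not alter $f^\ast \alpha$ off the sphere --- but since $f$ maps into $\S^{2n}$, only the restriction $\iota^\ast \alpha$ ever enters the pullback, so the extension is irrelevant to the computation.
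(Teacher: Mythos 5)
Your proof is correct and takes essentially the same route as the paper, whose entire argument is that $f^\ast\bigl(\alpha|_{\S^{2n}}\bigr) = f^\ast\alpha$ (your functoriality step $f = \iota\circ\tilde f$) together with the automatic bound $\rank df \leq 2n$, plus Lemma~\ref{la:hopffibration} for the final claim. The additional well-definedness argument you supply (a closed $(2n-1)$-form on $\S^{4n-1}$ is exact, then Stokes) is sound and is simply what the paper delegates to Proposition~\ref{pr:HIchoiceofomegairrel} and the classical theory.
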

This is obvious, since $\rank df\leq 2n$ and $f^\ast (\alpha \big
|_{\S^{2n}}) = f^\ast\alpha$. The last statement follows from
Lemma~\ref{la:hopffibration}.

\begin{remark}
\label{rem:hopfinvspheresrnot1}
Observe that the Hopf invariant $\HI_\alpha f$ is defined for mappings $f: \S^{4n-1} \to \R^{m}$. 
If we denote by $\S^{4n-1}(r) = r \S^{4n-1}$ the sphere of radius $r$ centered at the origin, then 
for mappings $f: \S^{4n-1}(r) \to \R^{m}$ we set
$$
 \HI_\alpha \left(f\big |_{\S^{4n-1}(r)}\right) := \HI_\alpha (\tilde{f}_r),
$$
where $\tilde{f}_r: \S^{4n-1} \to \R^{m}$ is defined by $\tilde{f}_r(x)=f(rx)$.
\end{remark}

\subsection{Construction for Lipschitz functions}
In order to make our argument precise, we have to ensure that every
step above makes sense also for non-smooth Lipschitz mappings. For
instance, observe that  $f^\ast \alpha$ is only bounded, so one has to
interpret $d (f^\ast \alpha)$ in the weak sense.

This is a non-trivial technicality, as one cannot just approximate $f$
by smooth functions without losing the rank condition, which is
essential for the construction of $\omega$.

First, we confirm that \eqref{eq:dfastalphaeq0} holds in a weak sense.
\begin{lemma}
\label{pr:p:pullbackfrank2}
Let $m,k\geq 2n+1$.
Let $\M$ be a smooth $k$-dimensional oriented manifold with or without boundary,
and assume that $f: \M \to \R^m$ is a Lipschitz map with $\rank df \leq 2n$
almost everywhere. Then
for any smooth $2n$-form $\eta \in C^\infty(\Ep^{2n}\R^m)$,
$f^*\eta$ is weakly closed, i.e.
$$
\int_{\M} (f^\ast \eta) \wedge d\varphi = 0
\quad
\mbox{for any $\varphi \in C_0^\infty(\Ep^{k-2n-1}\M)$.}
$$
\end{lemma}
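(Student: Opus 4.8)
The plan is to derive the weak closedness of $f^\ast\eta$ from the weak pullback--exterior-derivative commutation formula, and then to exploit the rank bound pointwise on the smooth form $\eta$. The guiding observation is that, although $f^\ast\eta$ is only bounded (so $d(f^\ast\eta)$ has no classical pointwise meaning), Lemma~\ref{pr:weakdfastcommute}(2) allows us to move the exterior derivative off of the nonsmooth object $f^\ast\eta$ and onto the smooth form $\eta$, where the condition $\rank df\le 2n$ can be used directly. Concretely, with $\ell=2n$ the lemma gives, for every test form $\varphi\in C_0^\infty(\Ep^{k-2n-1}\M)$,
$$
\int_{\M} (f^\ast\eta)\wedge d\varphi = (-1)^{2n+1}\int_{\M} f^\ast(d\eta)\wedge\varphi = -\int_{\M} f^\ast(d\eta)\wedge\varphi,
$$
so the statement reduces to showing that $f^\ast(d\eta)=0$ almost everywhere.

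To see the vanishing, I would invoke Rademacher's theorem: the Lipschitz map $f$ is differentiable almost everywhere, and at each point $p$ of differentiability the value of the $(2n+1)$-form $f^\ast(d\eta)$ is obtained by pulling the smooth $(2n+1)$-form $d\eta$ back through the linear map $df(p)$. Since $\rank df(p)\le 2n$, the image $df(p)(T_p\M)$ is a subspace of dimension at most $2n$, so any $(2n+1)$-fold wedge of covectors in its dual image vanishes; equivalently, every $(2n+1)\times(2n+1)$ minor of the Jacobian of $f$ is zero. Hence $f^\ast(d\eta)(p)=0$ at almost every $p$, the right-hand side above is zero, and $f^\ast\eta$ is weakly closed.

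The main obstacle is purely technical and concerns the applicability of Lemma~\ref{pr:weakdfastcommute}(2): that lemma requires $\eta\in C^\infty(\Ep^{2n}\R^m)\cap W^{1,\infty}$, i.e.\ $\eta$ and $|\nabla\eta|$ bounded, whereas here $\eta$ is merely a smooth form on $\R^m$ and may grow at infinity. I would remove this difficulty by a localization argument. Since $\varphi$ has compact support and $f$ is continuous, $f(\supp\varphi)$ is a bounded subset of $\R^m$; choose $\tilde\eta\in C_0^\infty(\Ep^{2n}\R^m)$ agreeing with $\eta$ on a neighborhood of $f(\supp\varphi)$. Then $\tilde\eta\in C^\infty\cap W^{1,\infty}$, and by locality of the pullback and of $d$ we have $f^\ast\tilde\eta=f^\ast\eta$ and $f^\ast(d\tilde\eta)=f^\ast(d\eta)$ on $\supp\varphi$. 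Because both integrals above only involve $\supp\varphi$, replacing $\eta$ by $\tilde\eta$ changes nothing, and the lemma applies to $\tilde\eta$.

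Finally, I would check the remaining hypotheses of Lemma~\ref{pr:weakdfastcommute}(2) with $\ell=2n$. The Lipschitz map $f$ lies in $W^{1,\infty}_{\rm loc}(\M,\R^m)\subset W^{1,p}_{\rm loc}(\M,\R^m)$ for every $p$, in particular for $p=2n+1=\ell+1$, and $0\le\ell=2n\le k-1$ holds since $k\ge 2n+1$; as recorded in the Remark following that lemma, the integrability furnished by $f\in W^{1,k}_{\rm loc}$ is exactly what is needed. With these verifications in place the two displayed identities are justified, and the proof is complete.
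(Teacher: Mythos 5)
Your proof is correct and follows essentially the same route as the paper: both reduce the claim to Lemma~\ref{pr:weakdfastcommute}(2) and then use Rademacher's theorem plus the rank bound to conclude $f^\ast(d\eta)=0$ a.e., with the paper handling the missing $W^{1,\infty}$ hypothesis on $\eta$ by the same compactness-of-$f(\supp\varphi)$ observation that you spell out via the cutoff $\tilde\eta$. Your write-up merely makes explicit the localization and the pointwise linear-algebra step that the paper states in one line.
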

\begin{proof}
Let $\varphi \in C_0^\infty(\Ep^{k-2n-1}\M)$.
Since $f$ is Lipschitz, it is in particular in $W^{1,k}_{\rm loc}(\M,\R^m)$, so
by Lemma~\ref{pr:weakdfastcommute} we have
\begin{equation}
\label{2013}
 \int_{\M}  (f^\ast \eta)\wedge d\varphi  = -\int_{\M} f^\ast (d\eta)\wedge\varphi =0.
\end{equation}
In Lemma~\ref{pr:weakdfastcommute} we required that $\eta\in C^\infty\cap W^{1,\infty}$.
However, $W^{1,\infty}$ regularity of $\eta$ is not needed here, because the image of $f$ restricted to the support
of $\varphi$ is compact.
The last equality in \eqref{2013} follows from the fact that ${\rm rank}\, df\leq 2n$ a.e.\ and hence $f^*(d\eta)=0$ a.e.
\end{proof}

Let $\alpha$ be any smooth $2n$-form on $\R^m$, $m\geq 2n+1$ and let
$f:\S^{4n-1}\to\R^m$ be Lipschitz with $\rank df\leq 2n$ a.e.
According to Lemma~\ref{pr:p:pullbackfrank2}, $f^*\alpha$ is weakly
closed. Since $f^*\alpha\in L^2(\Ep^{2n}\S^{4n-1})$,
Proposition~\ref{la:poincare} and the fact that
$H^{2n}_{DR}(\S^{4n-1})=\{ 0\}$ imply that there is  $\omega \in
W^{1,2}(\Ep^{2n-1} \S^{4n-1})$ such that $d\omega=f^*\alpha$. Thus,
definition \eqref{eq:def:hopfinvariant} makes sense also for Lipschitz
continuous $f$. Moreover,

\begin{proposition}
\label{pr:HIchoiceofomegairrel}
Let $\omega_1, \omega_2 \in W^{1,p}(\Ep^{2n-1}\S^{4n-1})$, for some $p
\geq 2-\frac{1}{2n}$, and assume that $d\omega_1 = d\omega_2$ almost
everywhere. Then the forms $\omega_i\wedge d\omega_i$, $i=1,2$ are
integrable and
$$
 \int_{\S^{4n-1}} \omega_1 \wedge d\omega_1 = \int_{\S^{4n-1}} \omega_2 \wedge d\omega_2.
$$
In particular, for any Lipschitz map $f: \S^{4n-1} \to \R^m$ with
$\rank df \leq 2n$ a.e., definition \eqref{eq:def:hopfinvariant} of
$\HI_\alpha f$ is independent of the choice of $\omega \in
W^{1,p}(\Ep^{2n-1}\S^{4n-1})$ with $d\omega = f^* \alpha$.
\end{proposition}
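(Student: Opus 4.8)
The plan is to first establish integrability via the Sobolev embedding and then reduce the asserted equality to the vanishing of $\int_{\S^{4n-1}}(\omega_1-\omega_2)\wedge d\omega_1$, which I would prove by smooth approximation together with Stokes' theorem on the closed manifold $\S^{4n-1}$.

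For integrability, set $k=4n-1$ and recall the Sobolev embedding $W^{1,p}(\Ep^{2n-1}\S^{4n-1})\hookrightarrow L^{p^*}(\Ep^{2n-1}\S^{4n-1})$ with $p^*=kp/(k-p)$, the cases $p\ge k$ being only easier. Since $d\omega_i\in L^p$, the form $\omega_i\wedge d\omega_i$ is a top-degree form whose coefficients are products of an $L^{p^*}$ function and an $L^p$ function, hence integrable as soon as $\tfrac1{p^*}+\tfrac1p\le 1$. A direct computation gives $\tfrac1{p^*}+\tfrac1p=\tfrac{2k-p}{kp}$, and $\tfrac{2k-p}{kp}\le1$ is equivalent to $p\ge \tfrac{2k}{k+1}=2-\tfrac1{2n}$. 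Thus the hypothesis on $p$ is exactly the threshold guaranteeing $\omega_i\wedge d\omega_i\in L^1(\Ep^{4n-1}\S^{4n-1})$; moreover, on the compact manifold $\S^{4n-1}$ this same threshold yields $L^{p^*}\hookrightarrow L^{p'}$ for the conjugate exponent $p'=p/(p-1)$, which I will use for the limit passage below.

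For the equality, write $\sigma:=\omega_1-\omega_2\in W^{1,p}(\Ep^{2n-1}\S^{4n-1})$, so that $d\sigma=0$ a.e. Using $d\omega_1=d\omega_2$ a.e.,
$$
\int_{\S^{4n-1}}\omega_1\wedge d\omega_1-\int_{\S^{4n-1}}\omega_2\wedge d\omega_2=\int_{\S^{4n-1}}\sigma\wedge d\omega_1,
$$
so it suffices to show that the right-hand side vanishes. I cannot invoke Lemma~\ref{pr:weakdfastcommute}(3) here, both because the degrees $\ell_1=\ell_2=2n-1$ satisfy $\ell_1+\ell_2=k-1$ rather than $\le k-2$, and because $p$ may be smaller than $2$; so I would argue by approximation. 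Choose smooth forms $\sigma_\eps,\ (\omega_1)_\eps\in C^\infty(\Ep^{2n-1}\S^{4n-1})$ with $\sigma_\eps\to\sigma$ and $(\omega_1)_\eps\to\omega_1$ in $W^{1,p}$ (smooth forms are dense, via charts and mollification). For smooth forms on the closed manifold $\S^{4n-1}$, Stokes' theorem gives $\int_{\S^{4n-1}}d\big(\sigma_\eps\wedge(\omega_1)_\eps\big)=0$, and since $\deg\sigma_\eps=2n-1$ is odd,
$$
\int_{\S^{4n-1}}\sigma_\eps\wedge d(\omega_1)_\eps=\int_{\S^{4n-1}}d\sigma_\eps\wedge(\omega_1)_\eps.
$$
Letting $\eps\to0$: on the left $\sigma_\eps\to\sigma$ in $L^{p^*}$ and $d(\omega_1)_\eps\to d\omega_1$ in $L^p$, while on the right $d\sigma_\eps\to d\sigma=0$ in $L^p$ and $(\omega_1)_\eps\to\omega_1$ in $L^{p^*}$. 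By the generalized H\"older inequality (together with $L^{p^*}\hookrightarrow L^{p'}$ on the finite-measure space $\S^{4n-1}$), both sides converge, the right-hand side to $\int_{\S^{4n-1}}d\sigma\wedge\omega_1=0$. Hence $\int_{\S^{4n-1}}\sigma\wedge d\omega_1=0$, as required.

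The final ``in particular'' statement then follows immediately: for a Lipschitz $f$ with $\rank df\le2n$ a.e., Lemma~\ref{pr:p:pullbackfrank2} and Proposition~\ref{la:poincare} produce $\omega\in W^{1,2}(\Ep^{2n-1}\S^{4n-1})$ with $d\omega=f^*\alpha$, and any two admissible choices $\omega_1,\omega_2\in W^{1,p}(\Ep^{2n-1}\S^{4n-1})$, $p\ge2-\tfrac1{2n}$, satisfy $d\omega_1=f^*\alpha=d\omega_2$, so the first part applies and $\HI_\alpha f$ is well defined. The main obstacle is precisely this limit passage: the integrand $\omega\wedge d\omega$ is only borderline integrable, so both its $L^1$-integrability and the stability of the wedge products under $W^{1,p}$-approximation rest on the sharp exponent $p\ge2-\tfrac1{2n}$, and it is the exact matching of this threshold with the Sobolev embedding that makes the argument close.
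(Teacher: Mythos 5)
Your proof is correct, and it reaches the paper's conclusion by a route that is in the same family but organized differently. The paper factors the statement through Proposition~\ref{3632}: it first proves, via an auxiliary lemma on products of Sobolev functions (if $f,g\in W^{1,2k/(k+1)}(\R^k)$ then $fg\in W^{1,1}(\R^k)$, established through the absolute-continuity-on-lines characterization of $W^{1,1}$ together with H\"older at the conjugate-Sobolev exponent $p^*=p/(p-1)$), that the wedge product $\omega\wedge\nu$ of two $W^{1,p}$ forms is itself a $W^{1,1}$ form satisfying the pointwise Leibniz rule $d(\omega\wedge\nu)=d\omega\wedge\nu-\omega\wedge d\nu$ a.e.; Stokes' theorem is then applied by approximating the \emph{product} $\omega\wedge\nu$ in $W^{1,1}$ by smooth forms, yielding the symmetric identity $\int_{\S^{4n-1}}d\omega\wedge\nu=\int_{\S^{4n-1}}\omega\wedge d\nu$, from which the proposition follows by the same algebraic reduction you use. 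You instead approximate each \emph{factor} separately in $W^{1,p}$, apply Stokes only to smooth forms, and pass to the limit using the bilinear bound $\bigl|\int\sigma\wedge d\omega\bigr|\leq C\Vert\sigma\Vert_{L^{p^*}}\Vert d\omega\Vert_{L^{p}}$; this avoids entirely the ACL product lemma and the assertion that the product is a $W^{1,1}$ form, so your version is more economical for the purpose at hand. (Your observation that Lemma~\ref{pr:weakdfastcommute}(3) is not applicable here is also correct, and indeed the paper does not use it at this point.) The paper's stronger intermediate statement is not wasted effort, however: Proposition~\ref{3632} is invoked again later, in the proof of Proposition~\ref{pr:HIconvergence}, where the integration-by-parts identity is needed for two different forms $\omega_k$ and $\omega$. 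Both arguments hinge on exactly the same arithmetic, namely that the threshold $p\geq 2k/(k+1)=2-\frac{1}{2n}$ with $k=4n-1$ is precisely the exponent at which the Sobolev embedding becomes dual to $L^p$, so that H\"older closes the estimates; your identification of this as the crux matches the paper's.
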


This result easily follows from a slightly more general fact.
\begin{proposition}
\label{3632}
If $\omega,\nu\in W^{1,p}(\Ep^{2n-1}\S^{4n-1})$, for some 
$p \geq 2-\frac{1}{2n}$, then
$
\omega\wedge\nu \in W^{1,1}(\Ep^{4n-2}\S^{4n-1}),
$
$
d\omega\wedge\nu, \omega\wedge d\nu \in L^1(\Ep^{4n-1}\S^{4n-1}),
$
$
d(\omega\wedge\nu) = d\omega\wedge\nu - \omega\wedge d\nu
\quad
\mbox{a.e.}
$
and
$$
\int_{\S^{4n-1}} d\omega\wedge\nu = \int_{\S^{4n-1}} \omega\wedge d\nu.
$$
\end{proposition}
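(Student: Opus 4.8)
The plan is to reduce everything to the Leibniz rule for products of Sobolev functions together with Stokes' theorem for smooth forms, the exponent hypothesis $p\geq 2-\frac{1}{2n}$ entering exactly at the level of a single borderline H\"older estimate. Throughout write $k=4n-1$ for the dimension of the sphere.

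First I would invoke the Sobolev embedding on the compact manifold $\S^{4n-1}$. Since $p<k$, every $(2n-1)$-form in $W^{1,p}(\Ep^{2n-1}\S^{4n-1})$ has coefficients in $L^{p^*}$ with $\frac{1}{p^*}=\frac{1}{p}-\frac{1}{k}$, so $\omega,\nu\in L^{p^*}$ while $d\omega,d\nu\in L^p$. A direct computation shows that the hypothesis is equivalent to the required H\"older balance:
\begin{equation*}
p\geq 2-\frac{1}{2n}=\frac{2k}{k+1}
\quad\Longleftrightarrow\quad
\frac{1}{p}+\frac{1}{p^*}=\frac{2}{p}-\frac{1}{k}\leq 1,
\end{equation*}
and the same inequality forces $\frac{1}{p^*}\leq\frac12-\frac{1}{2k}<\frac12$, i.e.\ $p^*>2$. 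H\"older's inequality then gives $\omega\wedge\nu\in L^{p^*/2}\subset L^1$ (using $p^*\geq 2$ and compactness of $\S^{4n-1}$), while the mixed products $d\omega\wedge\nu$ and $\omega\wedge d\nu$, having one factor in $L^p$ and one in $L^{p^*}$, lie in $L^1$. In a coordinate chart the coefficients of $\omega\wedge\nu$ are products $\omega_I\nu_J$ of $W^{1,p}$ functions whose formal gradients $\nu_J\nabla\omega_I+\omega_I\nabla\nu_J$ are in $L^1$ by the very same estimate.

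Next I would establish both the Leibniz identity and the $W^{1,1}$ membership by approximation. Choose smooth forms $\omega_j,\nu_j\to\omega,\nu$ in $W^{1,p}$; by the Sobolev embedding these converge in $L^{p^*}$, while $d\omega_j\to d\omega$ and $d\nu_j\to d\nu$ in $L^p$. Splitting
\begin{equation*}
d\omega_j\wedge\nu_j-d\omega\wedge\nu=(d\omega_j-d\omega)\wedge\nu_j+d\omega\wedge(\nu_j-\nu)
\end{equation*}
and applying H\"older (the $L^{p^*}$-norms of $\nu_j$ being bounded) yields $d\omega_j\wedge\nu_j\to d\omega\wedge\nu$ in $L^1$, and symmetrically $\omega_j\wedge d\nu_j\to\omega\wedge d\nu$ and $\omega_j\wedge\nu_j\to\omega\wedge\nu$ in $L^1$. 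For the smooth approximants the classical product rule gives $d(\omega_j\wedge\nu_j)=d\omega_j\wedge\nu_j-\omega_j\wedge d\nu_j$ (the sign being $(-1)^{2n-1}=-1$); testing against $C_0^\infty$ forms and passing to the limit shows $d(\omega\wedge\nu)=d\omega\wedge\nu-\omega\wedge d\nu$ weakly, which together with the coefficientwise Leibniz rule of the previous step gives $\omega\wedge\nu\in W^{1,1}$. Finally, Stokes' theorem applied to the smooth forms gives $\int_{\S^{4n-1}}d(\omega_j\wedge\nu_j)=0$ since $\S^{4n-1}$ has no boundary, whence $\int_{\S^{4n-1}}d\omega_j\wedge\nu_j=\int_{\S^{4n-1}}\omega_j\wedge d\nu_j$, and the $L^1$-convergences above let me pass to the limit to obtain the asserted identity.

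The main obstacle is the borderline nature of the H\"older step: the hypothesis $p\geq 2-\frac{1}{2n}$ is precisely the threshold $\frac{1}{p}+\frac{1}{p^*}\leq 1$, so there is no slack, and one must verify separately that the intermediate exponent satisfies $p^*\geq 2$ in order even to make sense of $\omega\wedge\nu$ as an $L^1$ form. For this reason the convergences must be arranged in the exact spaces $L^p$ (for the exterior derivatives) and $L^{p^*}$ (for the forms themselves); the naive expectation that the products converge in $W^{1,p}$ is false in this regime, and it is only the two distinct integrability scales that make the limiting argument go through.
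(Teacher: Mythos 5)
Your proof is correct, and its skeleton (Sobolev embedding at the borderline exponent, H\"older, Leibniz rule, Stokes) matches the paper's; the difference lies in how the product rule and the $W^{1,1}$ membership are justified. The paper isolates an auxiliary lemma --- if $f,g\in W^{1,\frac{2k}{k+1}}(\R^k)$ then $fg\in W^{1,1}(\R^k)$ --- and proves it pointwise via the ACL characterization of Sobolev functions: $f$ and $g$ are absolutely continuous on almost every line, hence so is $fg$, which legitimizes the coefficientwise Leibniz formula a.e.; the exponent identity $p^\ast=p/(p-1)$ at the critical exponent (your condition $\frac1p+\frac1{p^\ast}\le 1$) then puts the resulting gradient in $L^1$. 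You instead run a density argument: approximate $\omega,\nu$ by smooth forms in $W^{1,p}$, observe that all relevant products converge in $L^1$ thanks to the two integrability scales $L^p$ and $L^{p^\ast}$, and conclude by closedness of the distributional derivative under $L^1$ limits; Stokes is then applied to the smooth approximants rather than, as in the paper, to the limit form $\omega\wedge\nu\in W^{1,1}$. Both routes exploit exactly the same H\"older balance, and your approximation route has the advantage of not invoking the ACL theorem. Two small points to tighten. First, you write ``since $p<k$,'' but the hypothesis allows any $p\ge 2-\frac1{2n}$; reduce first, as the paper does, to $p=2-\frac1{2n}$ (legitimate since $W^{1,q}\subset W^{1,p}$ on a compact manifold for $q\ge p$), after which $p<k=4n-1$ indeed holds. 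Second, the weak identity $d(\omega\wedge\nu)=d\omega\wedge\nu-\omega\wedge d\nu$, obtained by testing against $C_0^\infty$ forms, does not by itself yield $\omega\wedge\nu\in W^{1,1}$: membership in $W^{1,1}$ requires all coefficientwise partial derivatives in $L^1$, not only the antisymmetrized combinations entering the exterior derivative. But your own approximation scheme, applied to each coefficient $\omega_I\nu_J$ (the products converge in $L^1$ and their gradients converge in $L^1$ to $\nu_J\nabla\omega_I+\omega_I\nabla\nu_J$), does give the full statement, so this is a matter of rearranging the logic --- deriving $W^{1,1}$ from the coefficientwise limits and the pointwise identity from that --- rather than a genuine gap.
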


Assuming for a moment the validity of Proposition~\ref{3632}, we show
how to complete the proof of
Proposition~\ref{pr:HIchoiceofomegairrel}. We have
$$
\int_{\S^{4n-1}} d\omega_1\wedge(\omega_1-\omega_2) =
\int_{\S^{4n-1}} \omega_1\wedge d(\omega_1-\omega_2) = 0,
$$
and hence,
$$
\int_{\S^{4n-1}} d\omega_1\wedge\omega_1 = 
\int_{\S^{4n-1}} d\omega_1\wedge\omega_2 =
\int_{\S^{4n-1}} d\omega_2\wedge\omega_2. 
$$
This proves Proposition~\ref{pr:HIchoiceofomegairrel}.
\qed

\begin{proof}[Proof of Proposition~\ref{3632}]
We will need the following auxiliary result.
\begin{lemma}
If $f,g\in W^{1,\frac{2k}{k+1}}(\R^k)$, then $fg\in W^{1,1}(\R^k)$.
\end{lemma}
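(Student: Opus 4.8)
The plan is to read everything off the critical Sobolev embedding, exploiting the fact that the exponent $p=\frac{2k}{k+1}$ is tuned precisely so that its Sobolev conjugate is the Hölder dual of $p$ itself. First I would record that, since $\frac1p=\frac{k+1}{2k}$ and $p<k$ for $k\geq 2$, the Gagliardo--Nirenberg--Sobolev inequality places $f,g\in L^{p^*}(\R^k)$ with $\frac{1}{p^*}=\frac1p-\frac1k=\frac{k-1}{2k}$, i.e. $p^*=\frac{2k}{k-1}$. The decisive arithmetic is then $\frac1p+\frac{1}{p^*}=\frac{k+1}{2k}+\frac{k-1}{2k}=1$, so $p$ and $p^*$ are conjugate. (For the application $k=4n-1\geq 3$, so the hypothesis $p<k$ is never in doubt.)

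With this in hand the two requirements for $fg\in W^{1,1}$ separate cleanly. The zeroth-order term is handled by interpolation: because $p<2<p^*$ (equivalently $\frac{k}{k+1}<1<\frac{k}{k-1}$), both $f$ and $g$ lie in $L^2(\R^k)$, and Cauchy--Schwarz gives $fg\in L^1$ with $\|fg\|_{1}\leq \|f\|_{2}\|g\|_{2}$. For the gradient the Leibniz rule predicts $\nabla(fg)=f\nabla g+g\nabla f$, and each summand is a product of an $L^{p^*}$ factor against an $L^p$ factor; since $\frac{1}{p^*}+\frac1p=1$, Hölder's inequality places both $f\nabla g$ and $g\nabla f$ in $L^1(\R^k)$.

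The one point requiring genuine care, and the step I expect to be the main obstacle, is justifying that the distributional gradient of $fg$ actually equals $f\nabla g+g\nabla f$, since $f$ and $g$ are only Sobolev and the product rule is not automatic at this regularity. I would establish it by approximation: choose smooth $f_\eps,g_\eps$ converging to $f,g$ in $W^{1,p}(\R^k)$, hence (by the embedding) in $L^{p^*}$ as well, and apply the classical identity $\nabla(f_\eps g_\eps)=f_\eps\nabla g_\eps+g_\eps\nabla f_\eps$. The right-hand side converges in $L^1$ because each product converges in its conjugate dual pairing ($L^{p^*}$ against $L^p$), while $f_\eps g_\eps\to fg$ in $L^1$ via the $L^2\times L^2$ estimate above; testing both sides against $C_0^\infty$ vector fields and passing to the limit transfers the identity to $f,g$ and identifies the weak gradient. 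Combining the $L^1$ bounds on $fg$ and on $f\nabla g+g\nabla f$ then yields $fg\in W^{1,1}(\R^k)$, completing the proof.
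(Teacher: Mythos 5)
Your proposal is correct, and it pivots on exactly the same arithmetic as the paper: for $p=\frac{2k}{k+1}$ the Sobolev conjugate satisfies $p^*=\frac{p}{p-1}$, so each term $f\nabla g$, $g\nabla f$ is an $L^{p^*}$--$L^p$ pairing and lands in $L^1$. Where you diverge is in the step you yourself flagged as the main obstacle, the justification of the Leibniz rule at this low regularity. The paper does not approximate at all: it invokes the characterization of Sobolev functions as those absolutely continuous on almost every line (\cite[Section~4.9]{EG}), observes that a product of absolutely continuous functions is absolutely continuous, computes $\partial_i(fg)=f_{x_i}g+fg_{x_i}$ classically a.e.\ along lines, checks this lies in $L^1$ by the same H\"older pairing, and then uses the ACL characterization again in the converse direction to conclude $fg\in W^{1,1}$. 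Your route instead approximates $f,g$ by smooth functions in $W^{1,p}$, upgrades to $L^{p^*}$ convergence via Gagliardo--Nirenberg--Sobolev, and passes to the limit in the distributional identity; this is perfectly sound (the bilinear estimates you need are exactly the conjugate-exponent pairings you set up), just a different mechanism. The ACL argument buys directness --- no limiting procedure, no continuity of the product map to verify --- while your approximation argument is the more standard functional-analytic reflex and transfers more readily to situations where a Fubini-type line characterization is awkward. One small point: the lemma as stated includes $k=1$, where $p=1$ and your embedding hypothesis $p<k$ fails; the paper dispatches this case in one line via $W^{1,1}(\R)\subset L^\infty$, and your scheme would too (pair $L^\infty$ against $L^1$), but as written you exclude it, relying on the fact that the application only uses $k=4n-1\geq 3$.
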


\begin{proof}
Let $p=2k/(k+1)$. If $k\geq 2$, an easy calculation shows that the
Sobolev exponent satisfies $p^*=p/(p-1)$ and hence 
by H\"older's inequality and the Sobolev embedding theorem $fg\in L^1$.  This
is also true for $k=1$ since $W^{1,1}(\bbbr)\subset L^\infty$. Sobolev
functions are absolutely continuous on almost every line \cite[Section~4.9]{EG}.
Since the product of absolutely continuous functions is absolutely
continuous, $fg$ is also absolutely continuous on almost every line. Hence we can
compute partial derivatives
\begin{equation}
\label{io23}
\frac{\partial}{\partial x_i}(fg)=
\frac{\partial f}{\partial x_i} \, g+f\frac{\partial g}{\partial x_i} \, .
\end{equation}
Again, since $p^*=p/(p-1)$ we conclude that $\partial(fg)/\partial
x_i\in L^1$ for $i=1,2,\ldots,k$. The characterization of $W^{1,1}$ by
absolute continuity on lines \cite[Section~4.9]{EG} implies that
$fg\in W^{1,1}(\R^k)$.
\end{proof}

Let $\omega,\nu\in W^{1,p}(\Ep^{2n-1}\S^{4n-1})$. We can assume that
$p=2-\frac{1}{2n}$. If $k=4n-1$, then
$$
\frac{2k}{k+1}=2-\frac{1}{2n}.
$$
Thus applying the lemma to representations of $\omega$ and $\nu$ in
local coordinates we obtain that
$\omega\wedge\nu\in W^{1,1}(\Ep^{4n-2}\S^{4n-1})$.
The product rule \eqref{io23} yields
\begin{equation}
\label{uri78}
d(\omega\wedge\nu)=d\omega\wedge\nu - \omega\wedge d\nu
\quad
\mbox{a.e.}
\end{equation}
It follows from the Sobolev embedding theorem that
$\omega,\nu\in L^{\frac{p}{p-1}}(\Ep^{2n-1}\S^{4n-1})$.
This and the H\"older inequality imply
$$
d\omega\wedge\nu,\ \omega\wedge d\nu \in L^1(\Ep^{4n-1}\S^{4n-1}).
$$
Finally, integrating \eqref{uri78} we get
$$
\int_{\S^{4n-1}} d\omega\wedge\nu - \int_{\S^{4n-1}} \omega\wedge d\nu =
\int_{\S^{4n-1}} d(\omega\wedge\nu) = 0.
$$
The last equality uses Stokes' theorem, which holds by approximating
$\omega\wedge\nu\in W^{1,1}(\Ep^{4n-2}\S^{4n-1})$ by smooth
$(4n-2)$-forms.
\end{proof}

%\begin{corollary}
%If $\omega_1,\omega_2\in W^{1,p}(\Ep^{2n-1}\S^{4n-1})$, $p\geq
%2-\frac{1}{2n}$, then $\omega_1\wedge\omega_2\in
%W^{1,1}(\Ep^{4n-2}\S^{4n-1})$.
%\end{corollary}
%
%\begin{proof}
%If $k=4n-1$, then
%$$
%\frac{2k}{k+1}=2-\frac{1}{2n}
%$$
%and the result follows from the lemma applied to representations of
%$\omega_1$ and $\omega_2$ in local coordinates.
%\end{proof}
%
%The argument is a modification of the proof of \cite[Proposition 17.22.(a)]{BT82}.
%We can assume that $p=2-\frac{1}{2n}$.
%Let $\omega_1, \omega_2 \in W^{1,p}(\Ep^{2n-1} \S^{4n-1})$ be such that
%$$
% d(\omega_1 -\omega_2) = 0 \quad \mbox{a.e.}
%$$
%Since $\omega_1, \omega_2 \in W^{1,p}(\Ep^{2n-1}\S^{4n-1})$ with $p = 2-\frac{1}{2n}$, 
%by Sobolev embedding, $\omega_1, \omega_2 \in L^{\frac{p}{p-1}}(\Ep^{2n-1}\S^{4n-1})$, thus
%$$
% \omega_i \wedge d \omega_j \in L^1(\Ep^{4n-1}\S^{4n-1}) \qquad \mbox{for any }i,j \in \{1,2\}.
%$$
%Moreover, we have
%$$
% d \brac{(\omega_1 - \omega_2)\wedge \omega_1} = d (\omega_1 - \omega_2)\wedge \omega_1 - (\omega_1 - \omega_2)\wedge d\omega_1 = - (\omega_1 - \omega_2)\wedge d\omega_1
%$$
%almost everywhere. Hence,
%\begin{align*}
% \int_{\S^{4n-1}} \omega_1 \wedge d \omega_1 - \int_{\S^{4n-1}} \omega_2 \wedge d \omega_2
% &=  \int_{\S^{4n-1}} (\omega_1 - \omega_2)\wedge d \omega_1\\
% &= -\int_{\S^{4n-1}} d \brac{(\omega_1 - \omega_2)\wedge \omega_1}\\
% &= 0.
%\end{align*}
%The last step is Stokes' theorem, which holds obviously by approximating
%$(\omega_1 - \omega_2)\wedge \omega_1 \in W^{1,1}(\Ep^{4n-2}\S^{4n-1})$ by smooth $(4n-2)$-forms.
%\end{proof}

Next, we show that $\HI_\alpha f$ is invariant under Lipschitz
homotopies of rank at most $2n$.

\begin{proposition}
\label{pr:HIhomotopicinv}
Let $f,g: \S^{4n-1} \to \R^m$ be two Lipschitz maps of rank at most
$2n$  and assume that there is a Lipschitz homotopy
$$
H: [0,1]\times\S^{4n-1} \to \R^m, \qquad H(0,\cdot) = f(\cdot), \,
H(1,\cdot) = g(\cdot),
$$
such that ${\rm rank}\, dH\leq 2n$ a.e. Then
$$
\HI_\alpha f =\HI_\alpha g.
$$
\end{proposition}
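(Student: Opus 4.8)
The plan is to run the classical Stokes-theoretic proof of homotopy invariance of the Hopf invariant on the cylinder, adapted to the low-regularity, low-rank setting. First I would extend $H$ to a Lipschitz homotopy of rank $\leq 2n$ on $(-\delta,1+\delta)\times\S^{4n-1}$ by setting $H(t,\cdot)=f$ for $t\leq 0$ and $H(t,\cdot)=g$ for $t\geq 1$; this extension is still Lipschitz with $\rank dH\leq 2n$ a.e. and lets me pin down the endpoint slices exactly. Write $M=(-\delta,1+\delta)\times\S^{4n-1}$, a $4n$-dimensional manifold which, by the remark following Lemma~\ref{la:Hodgedecomp}, embeds as an open subset of $\S^1\times\S^{4n-1}$, so that Proposition~\ref{la:poincare} is available. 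Since $4n\geq 2n+1$ and $\rank dH\leq 2n$, Lemma~\ref{pr:p:pullbackfrank2} shows that $H^\ast\alpha$ is weakly closed on $M$. Because $M$ is homotopy equivalent to $\S^{4n-1}$ and $0<2n<4n-1$, we have $H^{2n}_{DR}(M)=\{0\}$, so Proposition~\ref{la:poincare} furnishes $\beta\in W^{1,2}_{\rm loc}(\Ep^{2n-1}M)$ with $d\beta=H^\ast\alpha$ a.e.

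Next I would slice. For $t\in(-\delta,1+\delta)$ let $i_t:\S^{4n-1}\to M$ be $i_t(x)=(t,x)$. By Fubini for Sobolev forms, for a.e.\ $t$ the restriction $\beta_t:=i_t^\ast\beta$ lies in $W^{1,2}(\Ep^{2n-1}\S^{4n-1})$ and satisfies $d\beta_t=i_t^\ast(H^\ast\alpha)=H(t,\cdot)^\ast\alpha$. Since $H(t,\cdot)=f$ for $t\leq 0$ and $H(t,\cdot)=g$ for $t\geq 1$, and since $2\geq 2-\tfrac{1}{2n}$, Proposition~\ref{pr:HIchoiceofomegairrel} identifies $\Phi(t):=\int_{\S^{4n-1}}\beta_t\wedge d\beta_t$ with $\HI_\alpha f$ for a.e.\ $t<0$ and with $\HI_\alpha g$ for a.e.\ $t>1$. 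It therefore suffices to show that $\Phi$ is constant a.e.\ on $(-\delta,1+\delta)$.

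The key point is that $\gamma:=\beta\wedge d\beta$ is \emph{weakly closed} on $M$. Indeed, Lemma~\ref{pr:weakdfastcommute}(3) applies with $\ell_1=\ell_2=2n-1$, so that $\ell_1+\ell_2=4n-2=\dim M-2$, and $p=2$, giving $d(\beta\wedge d\beta)=d\beta\wedge d\beta$ in the weak sense; moreover $d\beta\wedge d\beta=H^\ast\alpha\wedge H^\ast\alpha=H^\ast(\alpha\wedge\alpha)=0$ a.e., because $\alpha\wedge\alpha$ is a $4n$-form while $\rank dH\leq 2n<4n$. Hence $\int_M\beta\wedge d\beta\wedge d\varphi=0$ for every $\varphi\in C_0^\infty(M)$. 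Testing against $\varphi=\chi(t)$ with $\chi\in C_0^\infty(-\delta,1+\delta)$, only the part of $\beta\wedge d\beta$ free of $dt$ survives the wedge with $d\varphi=\chi'(t)\,dt$, and its integral over $\{t\}\times\S^{4n-1}$ is exactly $\Phi(t)$; so Fubini yields $\int\chi'(t)\Phi(t)\,dt=0$ for all such $\chi$. Thus $\Phi$ equals a constant a.e., and comparing its values for $t<0$ and $t>1$ gives $\HI_\alpha f=\HI_\alpha g$.

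The hard part will be the rigorous slicing, that is, verifying that for a.e.\ $t$ the restriction $\beta_t$ is a genuine $W^{1,2}$ primitive of $H(t,\cdot)^\ast\alpha$ on the slice (so Proposition~\ref{pr:HIchoiceofomegairrel} truly identifies $\Phi(t)$ with the slice Hopf invariant), and that the distributional identity $\int_M\gamma\wedge d\varphi=0$ on the full cylinder descends correctly, via Fubini, to the one-dimensional statement $\int\chi'\Phi=0$. These are precisely the steps where the merely $W^{1,2}_{\rm loc}$ regularity of $\beta$ and the $L^\infty$ (rather than $W^{1,\infty}$) regularity of $d\beta$ must be handled with care. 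Everything else — the cohomological vanishing $H^{2n}_{DR}(M)=\{0\}$, the existence of the primitive $\beta$, and the crucial identity $d\beta\wedge d\beta=0$ forced by the rank bound — follows directly from the machinery already assembled in Sections~\ref{forms} and~\ref{sec:hopfinvariant}.
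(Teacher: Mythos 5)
Your proposal is correct and is essentially the paper's own argument: modify $H$ so that it is constant in $t$ near the endpoints, use Lemma~\ref{pr:p:pullbackfrank2} together with Proposition~\ref{la:poincare} (via the isometric embedding of the cylinder into $\S^1\times\S^{4n-1}$ and $H^{2n}_{DR}=\{0\}$) to produce a $W^{1,2}_{\rm loc}$ primitive of $H^*\alpha$, slice by Fubini, and use $\rank dH\leq 2n$ to force $d\beta\wedge d\beta=H^*(\alpha\wedge\alpha)=0$ a.e. The only divergence is bookkeeping in the last step---you deduce a.e.\ constancy of $\Phi(t)$ from Lemma~\ref{pr:weakdfastcommute}(3) tested against functions $\chi(t)$, whereas the paper fixes two good slices $t_0,t_1$ and runs Stokes' theorem on $(t_0,t_1)\times\S^{4n-1}$ applied to smooth approximants $\omega^{\eps_i}$ of the primitive---and both versions rest on exactly the slicing facts ($d\omega_t=\imath_t^*d\omega$ a.e.\ and the a.e.\ chain rule $(H\circ\imath_t)^*\alpha=\imath_t^*H^*\alpha$) that you flag as the technical core and that the paper settles by smooth approximation in $W^{1,2}_{\rm loc}$ combined with the Rademacher and Fubini theorems.
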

\begin{proof}
We adapt the argument from \cite[Proposition 17.22.(c)]{BT82}.
However, since we are dealing with non-smooth mappings we have to be very careful.
We may assume that $H: [0,1]\times \S^{4n-1} \to \R^m$ is constant in $t$ for
$0\leq t \leq 1/4$ and $3/4\leq t\leq 1$. If not, we take a Lipschitz function $s(t): [0,1] \to [0,1]$ such that
$$
 s(t) := \begin{cases}
          0 \quad 0\leq t \leq \fracm{4},\\
          1 \quad \frac{3}{4}\leq t\leq 1,
         \end{cases}
$$
and consider $H(s(t),x)$ instead of $H(t,x)$, which is still Lipschitz, and also satisfies the rank condition. We have
$$
 H^\ast \alpha \in L^\infty (\Ep^{2n} ((0,1) \times \S^{4n-1})).
$$
Since $\S^{4n-1}$ is a deformation retract of $(0,1)\times\S^{4n-1}$
we conclude that $H^{2n}_{DR}((0,1)\times\S^{4n-1})=H^{2n}_{DR}(\S^{4n-1})=\{ 0\}$,
\cite[Corollary~4.1.2.2]{BT82}.
Now from the fact that $\rank dH\leq 2n$ and from
Lemma~\ref{pr:p:pullbackfrank2} we infer that $H^*\alpha$ is weakly
closed. Since $(0,1)\times\S^{4n-1}$ can be isometrically embedded
into the compact manifold $\S^1\times\S^{4n-1}$ as an open set by
Proposition~\ref{la:poincare} there is 
$\omega\in W^{1,2}_{\rm loc}(\Ep^{2n-1}(0,1)\times\S^{4n-1})$ such that
$$
 d\omega = H^\ast \alpha \quad \mbox{a.e.}
$$
Denote by
$$
\imath_t:\S^{4n-1}\to \{ t\}\times \S^{4n-1}\subset (0,1)\times\S^{4n-1}.
$$
the canonical embedding of the sphere by the identity. From the
Rademacher and Fubini theorems it follows that for almost every $t\in
(0,1)$, $H$ is differentiable at almost all points of the sphere $\{
t\}\times\S^{4n-1}$. Thus the chain rule implies that 
\begin{equation}
\label{chain}
(H\circ\imath_t)^*\alpha = \imath_t^*H^*\alpha
\quad
\mbox{a.e.\ in $\S^{4n-1}$}
\end{equation}
Note also that $\omega_t:=\imath_t^*\omega$ is defined a.e.\ on
$\S^{4n-1}$ for almost all $t\in (0,1)$. Approximate $\omega$ by
$$
\omega^\eps\in C^\infty\left(\Ep^{2n-1}(0,1)\times\S^{4n-1}\right)
\quad
\mbox{in}
\quad
W^{1,2}_{\rm loc}\left(\Ep^{2n-1}(0,1)\times\S^{4n-1}\right).
$$
It follows from Fubini's theorem (cf.\ \cite[p.~189]{hajlaszSobolev})
that there is a sequence $\eps_i\to 0$ such that 
\begin{equation}
\label{conv}
\omega^{\eps_i}_t:=\imath_t^*\omega^{\eps_i} \to \imath_t^*\omega=\omega_t
\quad
\mbox{in}
\quad
W^{1,2}\left(\Ep^{2n-1}\S^{4n-1}\right)
\end{equation}
and
$$
\imath_t^*d\omega^{\eps_i}\to \imath_t^*d\omega
\quad
\mbox{in}
\quad
L^2\left(\Ep^{2n}\S^{4n-1}\right)
$$
for almost all $t\in (0,1)$. Since
$$
\imath_t^* d\omega^{\eps_i}=d\imath_t^*\omega^{\eps_i}\to d\omega_t
\quad
\mbox{in $L^2$}
$$
we conclude that
\begin{equation}
\label{three}
d\omega_t=\imath_t^*d\omega
\quad
\mbox{a.e.\ on $\S^{4n-1}$ for a.e.\ $t\in (0,1)$.}
\end{equation}
Fix $t_0\in (0,1/4)$ and $t_1\in (3/4,1)$ such that \eqref{chain}, \eqref{conv} and \eqref{three}
are satisfied. We have
$$
d\omega_{t_0}=\imath_{t_0}^*d\omega =
\imath_{t_0}^*H^*\alpha =
(H\circ\imath_{t_0})^*\alpha = f^*\alpha
\quad
\mbox{a.e.\ in $\S^{4n-1}$.}
$$
Similarly
$$
d\omega_{t_1}=g^*\alpha
\quad
\mbox{a.e.\ in $\S^{4n-1}$.}
$$
Hence
\begin{eqnarray*}
\HI_\alpha f -\HI_\alpha g
& = &
\int_{\S^{4n-1}}\omega_{t_0}\wedge d\omega_{t_0} -
\int_{\S^{4n-1}}\omega_{t_1}\wedge d\omega_{t_1} \\
& = &
\lim_{i\to\infty}
\left(
\int_{\S^{4n-1}}\omega_{t_0}^{\eps_i}\wedge d\omega_{t_0}^{\eps_i} -
\int_{\S^{4n-1}}\omega_{t_1}^{\eps_i}\wedge d\omega_{t_1}^{\eps_i}
\right) \\
& = &
\lim_{i\to\infty}
\int_{\partial( (t_0,t_1)\times\S^{4n-1})}
\omega^{\eps_i}\wedge d\omega^{\eps_i} \\
& = &
\lim_{i\to\infty}
\int_{(t_0,t_1)\times\S^{4n-1}}
d(\omega^{\eps_i}\wedge d\omega^{\eps_i})\\
& = &
\lim_{i\to\infty}
\int_{(t_0,t_1)\times\S^{4n-1}}
d\omega^{\eps_i}\wedge d\omega^{\eps_i}\\
& = &
\int_{(t_0,t_1)\times\S^{4n-1}} d\omega\wedge d\omega \\
& = &
\int_{(t_0,t_1)\times\S^{4n-1}}
H^*\alpha\wedge H^*\alpha \\
& = &
\int_{(t_0,t_1)\times\S^{4n-1}}
H^*(\alpha\wedge\alpha)=0.
\end{eqnarray*}
The last equality follows from the fact that $\rank dH\leq 2n$ a.e.\ and
$\alpha\wedge\alpha$ is a $4n$-form, so $H^*(\alpha\wedge\alpha)=0$
a.e.
\end{proof}

We will also need the following convergence result.

\begin{proposition}
\label{pr:HIconvergence}
Let $g_k$, $g \in \lip(\S^{4n-1},\R^{m})$ be Lipschitz mappings with
$\rank dg_k$, $\rank dg \leq 2n$ almost everywhere and such that for
a given $\alpha \in C^\infty(\Ep^{2n}\R^m)$
$$
 \lim_{k \to \infty} \vrac{g_k^*\alpha - g^*\alpha}_{L^p(\Ep^{2n}\S^{4n-1})} = 0,
$$
for some $p \geq 2-\frac{1}{2n}$. Then
$$
 \lim_{k \to \infty} \HI_\alpha g_k = \HI_\alpha g.
$$
\end{proposition}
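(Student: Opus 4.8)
The plan is to express both Hopf invariants through primitives of $g_k^*\alpha$ and $g^*\alpha$ that can be arranged to converge in $W^{1,p}$, and then to pass to the limit using nothing more than bilinearity of the wedge product together with a borderline Sobolev embedding and H\"older's inequality. First I would record that, by Lemma~\ref{pr:p:pullbackfrank2}, each of $g_k^*\alpha$ and $g^*\alpha$ is weakly closed, and hence so is the difference $g_k^*\alpha - g^*\alpha \in L^p(\Ep^{2n}\S^{4n-1})$. Since $H^{2n}_{DR}(\S^{4n-1})=\{0\}$, Proposition~\ref{la:poincare} applied on the compact manifold $\S^{4n-1}$ (in particular the estimate \eqref{eq:omegapoincest}) produces a primitive $\omega \in W^{1,p}(\Ep^{2n-1}\S^{4n-1})$ with $d\omega = g^*\alpha$, and forms $\sigma_k \in W^{1,p}(\Ep^{2n-1}\S^{4n-1})$ with $d\sigma_k = g_k^*\alpha - g^*\alpha$ and
$$
\vrac{\sigma_k}_{W^{1,p}(\S^{4n-1})} \leq C\,\vrac{g_k^*\alpha - g^*\alpha}_{L^p(\S^{4n-1})} \longrightarrow 0 .
$$
Setting $\omega_k := \omega + \sigma_k$ one has $d\omega_k = g^*\alpha + (g_k^*\alpha - g^*\alpha) = g_k^*\alpha$, so by Proposition~\ref{pr:HIchoiceofomegairrel} the invariant $\HI_\alpha g_k$ may be computed from this particular $\omega_k$, and by construction $\omega_k \to \omega$ in $W^{1,p}(\Ep^{2n-1}\S^{4n-1})$.

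Next I would expand the integrand using pointwise bilinearity of the wedge product, writing $\omega_k = \omega + \sigma_k$ and $d\omega_k = d\omega + d\sigma_k$ to obtain, almost everywhere,
$$
\omega_k \wedge d\omega_k - \omega \wedge d\omega = \sigma_k \wedge d\omega_k + \omega \wedge d\sigma_k .
$$
The key numerical input is that for $p = 2 - \tfrac{1}{2n} = \tfrac{2k}{k+1}$ with $k = 4n-1$ (the same exponent as in Proposition~\ref{3632}) the Sobolev embedding on $\S^{4n-1}$ gives $W^{1,p} \hookrightarrow L^{p/(p-1)}$, so $\omega$ and $\sigma_k$ lie in $L^{p/(p-1)}(\Ep^{2n-1}\S^{4n-1})$ with $\vrac{\sigma_k}_{L^{p/(p-1)}} \to 0$, while $d\omega_k \in L^p$ has norm $\vrac{d\omega_k}_{L^p} = \vrac{g_k^*\alpha}_{L^p}$ bounded uniformly in $k$ (since $g_k^*\alpha \to g^*\alpha$ in $L^p$) and $\vrac{d\sigma_k}_{L^p} = \vrac{g_k^*\alpha - g^*\alpha}_{L^p} \to 0$. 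The two wedge products are top-degree and integrable exactly by the H\"older pairing of $L^{p/(p-1)}$ with $L^p$ guaranteed by Proposition~\ref{3632}, and that same pairing gives
$$
\left| \int_{\S^{4n-1}} \sigma_k \wedge d\omega_k \right| \leq C\,\vrac{\sigma_k}_{L^{p/(p-1)}}\,\vrac{d\omega_k}_{L^p} \longrightarrow 0,
$$
$$
\left| \int_{\S^{4n-1}} \omega \wedge d\sigma_k \right| \leq C\,\vrac{\omega}_{L^{p/(p-1)}}\,\vrac{g_k^*\alpha - g^*\alpha}_{L^p} \longrightarrow 0 .
$$
Integrating the displayed identity and combining these two estimates yields $\HI_\alpha g_k - \HI_\alpha g \to 0$, which is the claim.

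I expect the only genuinely delicate point to be the \emph{uniform} production of converging primitives, i.e.\ the step that upgrades the convergence $g_k^*\alpha \to g^*\alpha$ in $L^p$ to convergence $\omega_k \to \omega$ in $W^{1,p}$. This is exactly where the quantitative bound \eqref{eq:omegapoincest} of Proposition~\ref{la:poincare} is indispensable: the abstract solvability of $d\omega = \eta$ alone would give existence of primitives but no control of their norms, whereas applying the estimate to the weakly closed difference $g_k^*\alpha - g^*\alpha$ forces $\vrac{\sigma_k}_{W^{1,p}} \to 0$. Once this is in hand, the remainder is routine, the borderline Sobolev exponent $p = 2-\tfrac{1}{2n}$ being precisely the value for which the H\"older pairing used above is admissible, and the independence of $\HI_\alpha$ from the chosen primitive (Proposition~\ref{pr:HIchoiceofomegairrel}) being what legitimizes the convenient choice $\omega_k = \omega + \sigma_k$.
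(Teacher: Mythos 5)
Your proof is correct, and it takes a genuinely (if mildly) different route from the paper's. The paper applies Proposition~\ref{la:poincare} to $g_k^*\alpha$ and $g^*\alpha$ \emph{separately}, obtaining primitives $\omega_k$, $\omega$ whose $L^{p/(p-1)}$ norms are only uniformly \emph{bounded} (via \eqref{eq:omegapoincest} and the Sobolev embedding); it then writes
$$
\HI_\alpha g_k - \HI_\alpha g = \int_{\S^{4n-1}} \omega_k \wedge (d\omega_k - d\omega) + \int_{\S^{4n-1}} (\omega_k-\omega) \wedge d\omega,
$$
and must invoke the integration-by-parts identity of Proposition~\ref{3632} to turn the second integral into $\int_{\S^{4n-1}} d(\omega_k - \omega) \wedge \omega$, so that \emph{both} terms carry the factor $d\omega_k - d\omega = g_k^*\alpha - g^*\alpha \to 0$ in $L^p$. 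You instead apply the quantitative Poincar\'e estimate to the weakly closed \emph{difference} $g_k^*\alpha - g^*\alpha$ (weak closedness is a linear condition, so this is legitimate), producing $\sigma_k \to 0$ in $W^{1,p}$ and hence primitives $\omega_k = \omega + \sigma_k$ that actually \emph{converge}; after that, the pointwise identity $\omega_k \wedge d\omega_k - \omega\wedge d\omega = \sigma_k \wedge d\omega_k + \omega \wedge d\sigma_k$ plus the H\"older/Sobolev pairing finishes the argument, with no integration by parts at all (Proposition~\ref{3632} enters only to guarantee integrability of the top-degree products). Both routes rest on the same three pillars --- Lemma~\ref{pr:p:pullbackfrank2}, Proposition~\ref{la:poincare} with its estimate, and Proposition~\ref{pr:HIchoiceofomegairrel} --- and on the same exponent arithmetic $p^*=p/(p-1)$ at $p=2-\tfrac{1}{2n}$; your version trades the integration-by-parts step for a more careful choice of primitives, which is arguably cleaner (no sign bookkeeping, and the role of the quantitative estimate is isolated exactly where you say it is), while the paper's version has the small advantage of working with whatever primitives the Hodge-theoretic machinery hands you for each map individually.
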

\begin{proof}
We can assume that $p=2-\frac{1}{2n}$.
According to Lemma~\ref{pr:p:pullbackfrank2} the forms $g^*\alpha$ and $g_k^*\alpha$
are weakly closed. Hence from Proposition~\ref{la:poincare} there exist
$\omega$, $\omega_k \in W^{1,p}(\Ep^{2n-1}\S^{4n-1})$ with $d\omega = g^*\alpha$, $d\omega_k = g_k^* \alpha$, and
such that
$$
 \vrac{\omega}_{L^{\frac{p}{p-1}}} \leq C \vrac{\omega}_{W^{1,p}} \leq C' \vrac{g^\ast \alpha}_{L^p},
$$
and similarly
$$
 \vrac{\omega_k}_{L^{\frac{p}{p-1}}} \leq C'  \vrac{g_k^\ast \alpha}_{L^p}.
$$
We used here the Sobolev inequality and the fact that $p^*=\frac{p}{p-1}$.
In view of Proposition~\ref{pr:HIchoiceofomegairrel},
$$
  \HI_\alpha g = \int_{\S^{4n-1}} \omega \wedge d\omega, \quad
  \HI_\alpha g_k = \int_{\S^{4n-1}} \omega_k \wedge d\omega_k. 
$$
Hence
\begin{eqnarray*}
 \HI_\alpha g_k - \HI_\alpha g
 &=&
 \int_{\S^{4n-1}} \omega_k \wedge d\omega_k - \omega \wedge d\omega\\
 &=&
 \int_{\S^{4n-1}} \omega_k \wedge (d\omega_k - d\omega) + (\omega_k- \omega) \wedge d\omega\\
 &=&
 \int_{\S^{4n-1}} \omega_k \wedge (d\omega_k - d\omega) + d(\omega_k- \omega) \wedge \omega\\
 &\leq&
 C\ \left(\vrac{\omega_k}_{L^{\frac{p}{p-1}}} + \vrac{\omega}_{L^{\frac{p}{p-1}}}\right)\ \vrac{g^* \alpha-g_k^* \alpha}_{L^p}\\
 &\leq&
 C\ (\vrac{g_k^*\alpha}_{L^{p}} + \vrac{g^*\alpha}_{L^{p}})\ \vrac{g^* \alpha-g_k^* \alpha}_{L^p}
\xrightarrow{k \to \infty} 0.
\end{eqnarray*}
The third-last equality follows from Proposition~\ref{3632}.
\end{proof}

\section{Proof of Theorem~\ref{main2}}
\label{main}

The case of $\pi_n(\S^n)$ having already been proved in Section~\ref{introduction}, it remains to show that the homotopy group $\pi_{4n-1}(\S^{2n})$ is rank-essential for $n \in \mathbb{N}$.
Let $f: \S^{4n-1} \to \S^{2n}\subset\R^{2n+1}$ be the mapping, and $\alpha$ the $2n$-form on $\R^{2n+1}$ such that
\begin{equation}
\label{eq:HIfneq0}
 \HI_\alpha f \neq 0.
\end{equation}
See Proposition~\ref{pr:classicVSthisHI}.
Assume by contradiction that $\pi_{4n-1}(\S^{2n})$ is not rank-essential. Hence
there exists a Lipschitz extension $F: \B^{4n} \to \R^{2n+1}$ such that $\rank dF \leq 2n$ almost everywhere in $\B^{4n}$.
Define the homotopy
$$
 H(t,\theta): [0,1] \times \S^{4n-1} \to \R^{2n+1}
$$
between $f=H(1,\cdot)$ and a constant map $g=H(0,\cdot)$ via
$$
 H(t,\theta) := F(t\theta).
$$
This homotopy $H$ is clearly Lipschitz, with $\rank dH \leq 2n$.
Obviously, $\rank df$ and $\rank dg$ do not exceed $2n$. Then, since
the Hopf invariant $\HI_\alpha f$ does not change under Lipschitz rank
$2n$-homotopies, see Proposition~\ref{pr:HIhomotopicinv},
$$
 \HI_\alpha f = \HI_\alpha g = 0,
$$
which contradicts \eqref{eq:HIfneq0}. The proof is complete.
\qed

\section{Proof of Theorem~\ref{density}}
\label{SobolevDensity}

The proof is similar to that of Theorem~1.2(a) and Proposition~1.3 in \cite{Heisenberg}.

Assume first that $\M=\B^{4n}$.
Let $\phi: \S^{2n} \to \bbbh_{2n}$ be a bi-Lipschitz map, which is a smooth embedding as a map from
$\S^{2n}$ to $\R^{4n+1}$, see Proposition~\ref{sul}.
Let $f_0 \in C^\infty(\S^{4n-1},\S^{2n})$ be the Hopf map from Lemma~\ref{la:hopffibration} such that
$$
 \HI f_0 \neq 0.
$$
It easily follows from Proposition~\ref{Capogna-Lin} that
$$
 f(x) := \phi\circ f_0\left (\frac{x}{|x|}\right ) \in W^{1,p}(\B^{4n},\bbbh_{2n}),
 \quad
 \mbox{for all $1 \leq p < 4n$.}
$$
We will prove that $f$ cannot be approximated in
$W^{1,p}(\B^{4n},\bbbh_{2n})$ by Lipschitz mappings $\lip(\B^{4n},\bbbh_{2n})$
when $4n-1\leq p<4n$. Suppose to the contrary that there is a sequence
$g_k \in \lip(\B^{4n},\bbbh_{2n})$ such that
$$
g_k \to f \quad \mbox{in $W^{1,p}(\B^{4n},\bbbh_{2n})$.}
$$
Note that by Proposition~\ref{rank_n} both
$\rank dg_k$ and $\rank df$ do not exceed $2n$. Formally, $f$ is not Lipschitz, but it is locally Lipschitz away from the
singularity at the origin and hence Proposition~\ref{rank_n} applies to $f$ as well.

Choose $\alpha \in C_0^\infty(\Ep^{2n}\R^{4n+1})$ to be a smooth extension of the push-forward
$\phi_* \vol_{\S^{2n}}$. Recalling our definition of the Hopf invariant of mappings whose
domains are scaled spheres $\S^{4n-1}(r)$, see Remark~\ref{rem:hopfinvspheresrnot1},
\begin{equation}
\label{eq:HIalphafonsphere}
 \HI_\alpha \left(f\big |_{\S^{4n-1}(r)}\right) =  \HI (f_0) \neq 0 \quad \mbox{for all $r \in (0,1)$}.
\end{equation}

On the other hand, $g_k \in \lip(\B^{4n},\bbbh_{2n})$,
and hence 
$g_k\big|_{\S^{4n-1}(r)}$
as a mapping to $\R^{4n+1}$ is Lipschitz homotopic to a constant map with the homotopy satisfying the
rank condition $\rank dH\leq 2n$ a.e.\ (see Proposition~\ref{rank_n}). Thus Proposition~\ref{pr:HIhomotopicinv}
yields
\begin{equation}
\label{eq:HIalphagkonsphere}
 \HI_\alpha \left(g_k \big |_{\S^{4n-1}(r)}\right) = 0 \quad \mbox{for all $k$ and all $r \in (0,1)$}.
\end{equation}
We are now going to show that \eqref{eq:HIalphafonsphere} and \eqref{eq:HIalphagkonsphere} contradict each other.

Since the mappings $g_k$ are not necessarily uniformly bounded we cannot claim that
$g_k\to f$ in $W^{1,p}(\B^{4n},\R^{4n+1})$, see Corollary~\ref{T5}. In particular we
cannot claim that $\nabla g_k\to \nabla f$ in $L^p(\B^{4n})$.
Nevertheless we can assume upon passing to a subsequence that $g_k\to f$ a.e.\ in $\B^{4n}$. We will construct sets $E_k$ such that
\begin{equation}
\label{zbieznosc}
\chi_{E_k}\nabla g_k\to \nabla f
\quad
\mbox{in $L^p(\B^{4n})$.}
\end{equation}
Let $K=\supp\alpha$, let
$$
S_k=\{x\in\B^{4n}:\, g_k(x)-f(x)\in Z\},
$$
where $Z$ is the center of $\bbbh_{2n}$ defined in \eqref{center},
and let
$$
E_k=S_k\cup g_k^{-1}(K).
$$
We claim that \eqref{zbieznosc} is true. According to Lemma~\ref{6.5},
$\nabla g_k=\nabla f$ a.e.\ in $S_k$ and hence
$$
\int_{S_k}|\nabla f-\nabla g_k|^p=0.
$$
Since the mappings $f$ and $g_k|_{g_k^{-1}(K)}$ are uniformly bounded,
the Euclidean lengths $|\nabla f|$ and $|\chi_{E_k}\nabla g_k|$
are comparable to the Heisenberg lengths $|\nabla f|_\bbbh$ and
$|\chi_{E_k}\nabla g_k|_\bbbh$ respectively on the set $\B^{4n}\setminus S_k$.
Thus Proposition~\ref{T4} yields
$$
\int_{\B^{4n}\setminus S_k} |\nabla f|^p+|\chi_{E_k}\nabla g_k|^p \leq
C\int_{\B^{4n}\setminus S_k} |\nabla f|_\bbbh^p + |\chi_{E_k}\nabla g_k|_\bbbh^p \to 0.
$$
Hence
\begin{eqnarray*}
\lefteqn{\int_{\B^{4n}} |\nabla f-\chi_{E_k}\nabla g_k|^p} \\
& \leq &
C\left( \int_{S_k} |\nabla f-\nabla g_k|^p +
\int_{\B^{4n}\setminus S_k} |\nabla f|^p+|\chi_{E_k}\nabla g_k|^p\right)\to 0.
\end{eqnarray*}
Now it follows from Fubini's theorem that, up to a subsequence which
we again denote by $g_k$,
$$
 \chi_{E_k}\nabla g_k  \big |_{\S^{4n-1}(r)} \xrightarrow{k \to
   \infty} \nabla f  \big |_{\S^{4n-1}(r)}
 \quad
 \mbox{in $L^p(\S^{4n-1}(r))$}
$$
for almost any $r \in (0,1)$. This and the almost everywhere
convergence $g_k\to f$ implies that 
\begin{equation}
\label{Iamtired}
\chi_{E_k}\left(g_k\big|_{\S^{4n-1}(r)}\right)^*\alpha \rightarrow
\left(f\big|_{\S^{4n-1}(r)}\right)^*\alpha
\quad
\mbox{in}\ L^{p/2n}\left(\Ep^{2n}\S^{4n-1}(r)\right)
\end{equation}
for almost all $r\in (0,1)$. On the other hand, since $K=\supp\alpha$,
$g_k^*\alpha = 0$ a.e.\ in $\B^{4n}\setminus g_k^{-1}(K)$ and hence
$g_k^*\alpha = 0$ a.e.\ in $\B^{4n}\setminus E_k$. Accordingly, 
$$
\left(g_k\big|_{\S^{4n-1}(r)}\right)^*\alpha =
\chi_{E_k}\left(g_k\big|_{\S^{4n-1}(r)}\right)^*\alpha \quad \mbox{for
  a.e.\ $r\in (0,1)$,}
$$
which, in conjunction with \eqref{Iamtired}, yields
$$
\lim_{k \to \infty} \left\Vert \left( g_k  \big |_{\S^{4n-1}(r)} \right)^* \alpha - \left( f  \big |_{\S^{4n-1}(r)} \right)^* \alpha\right\Vert_{L^{p/2n}(\Ep^{2n}\S^{4n-1}(r))} = 0.
$$
This, \eqref{eq:HIalphafonsphere}, and
Proposition~\ref{pr:HIconvergence} imply that for $p \geq 4n-1$ and
almost all $r\in (0,1)$, we have
$$
 \lim_{k \to \infty} \HI_\alpha \left(g_k  \big |_{\S^{4n-1}(r)}\right) =
 \HI_\alpha \left( f\big|_{\S^{4n-1}(r)}\right) \neq 0.
$$
This conclusion contradicts \eqref{eq:HIalphagkonsphere}.

If $\M$ is a general manifold of dimension $\dim\M\geq 4n$, then the
result follows from the case $\B^{4n}$ by a simple surgery  as in the
proof of Theorem~1.2 in \cite{Heisenberg}. We simply construct a
mapping $f\in W^{1,p}(\M,\bbbh_{2n})$ such that on a family of $4n$
dimensional slices in $\M$ it coincides with the mapping constructed
above. By using the Fubini theorem one easily arrives at a
contradiction by employing the case of $\B^{4n}$. \qed

\end{document}